\documentclass[10pt, letterpaper]{article}
\input{preambule}
\usepackage[T1]{fontenc}
\usepackage[margin=0.95in]{geometry}
\usepackage[utf8]{inputenc}
\usepackage{fix-cm}
\usepackage{microtype}

\usepackage{amsmath,amssymb,amsfonts,mathtools}
\usepackage{mathrsfs}
\usepackage{bm}
\usepackage{bbm}
\usepackage{dsfont}
\usepackage{stmaryrd}
\usepackage{euscript}

\mathtoolsset{showonlyrefs}
\numberwithin{equation}{section}

\usepackage{graphicx}
\usepackage{subfigure} 
\usepackage{booktabs}
\usepackage{array}
\usepackage{algorithm}
\usepackage{algorithmicx}
\usepackage{algpseudocode}
\usepackage[algo2e,ruled,vlined]{algorithm2e}

\usepackage{enumerate}
\usepackage{comment}
\usepackage{xcolor}
\usepackage{xcolor,soul}
\usepackage[normalem]{ulem}

\usepackage{tikz}
\usetikzlibrary{fit,matrix,chains,positioning,decorations.pathreplacing,arrows}

\usepackage{natbib}
\usepackage{titlesec}
\titleformat{\section}{\large\bfseries}{\thesection.}{0.5em}{}
\titleformat{\subsection}{\normalsize\bfseries}{\thesubsection.}{0.5em}{}

\title{\LARGE Policy Gradient Learning for Distributionally Robust Markov Decision Processes under Wasserstein Ambiguity\vspace{0.35cm}}

\author{
Yadh Hafsi\thanks{École Polytechnique, CMAP, \textsf{yadh.hafsi at polytechnique.edu}. Supported by the Chaire Risques Financiers, Société Générale, and the Institut Europlace de Finance.}
\and
Samy Mekkaoui\thanks{École Polytechnique, CMAP, \textsf{samy.mekkaoui at polytechnique.edu}. Supported by the S-G Chair ``Risques Financiers'' and the ``Deep Finance and Statistics'' Qube-RT Chair.}
\and
Huy\^en Pham\thanks{École Polytechnique, CMAP, \textsf{huyen.pham at polytechnique.edu}. Supported by the BNP-PAR Chair ``Futures of Quantitative Finance'', the Chair ``Risques Financiers'', FiME, and the ``Finance and Sustainable Development'' EDF--CACIB Chair.}
\and
Kaixin Yan\thanks{School of Mathematical Sciences, Xiamen University. Supported by the China Scholarship Council. This research was completed during her visit to CMAP, École Polytechnique, from November 2024 to April 2026.}
}

\begin{document}
\sloppy

\maketitle


\begin{abstract}
We study finite-horizon Markov decision processes under distributional uncertainty in the transition kernels and develop a policy-gradient framework for Wasserstein distributionally robust control. Ambiguity is modeled by Wasserstein balls of common radius centered at state--action-dependent nominal transition kernels, leading to a max--min problem over randomized policies and admissible transition laws. Because the worst-case transition law depends implicitly on the policy parameters, the standard policy-gradient argument does not apply directly. We address this difficulty by combining the dynamic programming recursion with Wasserstein duality and a primal envelope argument. In general, the right and left directional derivatives of the one-step worst-case value are obtained by taking the minimum or maximum expected downstream value derivative over the set of worst-case transition laws. In finite state--action spaces, this set is characterized through the optimal face of a transport linear program, yielding an exact directional-derivative recursion. Under the required stability conditions and uniqueness of the active dual and transport optimizers, the derivative becomes linear in the policy perturbation and admits an explicit vector-valued policy-gradient recursion. Building on this representation, we propose a robust actor--critic implementation and evaluate it on benchmark examples.
\end{abstract}

\vspace{5mm}

\noindent {\bf MSC Classification}: 90C40, 90C17, 93E20, 49L20, 68T05.

\vspace{5mm}

\noindent {\bf Key words}: Robust Markov decision processes; Robust reinforcement learning; Dynamic programming principle; Randomized policies; Policy gradient; Distributionally robust optimization.


\section{Introduction}

In many reinforcement learning and control problems, the transition law of the system is not known exactly. Even when a nominal model is available, the system may later be tested or deployed in an environment that differs from the one used for calibration. Different data sources, market conditions, or operating regimes may induce different probability measures on trajectories. As a result, a policy that performs well under one reference model may perform poorly when the underlying dynamics change. This motivates robust reinforcement learning, whose goal is to construct policies that perform well under a family of plausible transition laws rather than under a single nominal model.

This issue appears naturally in many applications. In inventory control and supply chain management, demand distributions are difficult to estimate precisely and may vary over time because of seasonality, structural change, or external shocks. Robust and distributionally robust methods have therefore been used to hedge against ambiguity in demand models; see for example \cite{BertsimasThiele2006,Scarf1958,GallegoMoon1993,KimChung2024}. In portfolio optimization,
market dynamics shift across regimes, with changes in volatility, correlations, and return
distributions that may occur at different time scales. A strategy calibrated to one regime may break
down after a regime shift. This has led to distributionally robust formulations under Wasserstein ambiguity \cite{BlanchetChenZhou2022}, as well as robust continuous-time control under model uncertainty and adaptive learning \cite{CarteaBhudisaksangSanchezBetancourt2025}. In all these settings, uncertainty acts through the law of the controlled state process. In our framework, this uncertainty is represented by set-valued transition kernels $\Pc_t(x,a)$, and robust optimization over these sets leads to policies that are more stable under model misspecification.

We study finite-horizon robust Markov decision processes with randomized policies and set-valued transition kernels. Our main focus is on ambiguity sets defined around reference transition laws. This setting is attractive because it models local distributional perturbations in a flexible and interpretable way. At the same time, it creates a basic difficulty for policy optimization. Under general ambiguity classes, such as Wasserstein or Kullback--Leibler balls, the worst-case transition kernel is typically not available in closed form. It is selected adversarially according to the value function generated by the current policy, so changing the policy can also change the worst-case model. This makes direct differentiation of the robust Bellman recursion difficult, and generally intractable without strong regularity assumptions on the optimizer (see Remark \ref{untractability_P_comment}).

\paragraph{Related work.} There are two main lines of literature behind our approach: robust Markov decision processes (MDPs) and distributionally robust optimization (DRO). On the robust MDPs side, the foundational papers \cite{Iyengar2005} and \cite{NilimElGhaoui2005} proved robust dynamic programming under rectangular ambiguity sets, showing that the Bellman recursion remains tractable when uncertainty separates across state--action pairs. This line was subsequently extended to $s$-rectangular ambiguity by \cite{wiesemannetal2013} and to distributionally robust MDPs with moment and nested ambiguity sets by \cite{xumannor2012}. %
Since then, robust MDPs have been studied under broader forms of model uncertainty (see \cite{LimXuMannor2013} for example). MDPs under general model uncertainty are studied in \cite{neufeld_markov_2023}. Policy gradient methods for robust MDPs have recently been developed in \cite{WangHoPetrik2023}, while robust Q-learning under Wasserstein ambiguity is studied in \cite{NeufeldSester2024}. Wasserstein-robust reinforcement learning has also been developed through direct minimax training \citep{AbdullahEtAl2019}, first-order methods for Wasserstein distributionally robust MDPs \citep{GrandClementKroer2021}, fast Bellman updates for Wasserstein robust Bellman operators \citep{FastBellmanWDRMDP2023}, and distributionally robust policy evaluation and learning for contextual bandits \citep{sietal2020,WDRContextualBandits2023}. In a related direction, \cite{coache2024robust} develop an actor--critic algorithm for robust \emph{risk-aware} reinforcement learning, in which robustness is obtained by considering all models within a Wasserstein ball around a reference model and policy gradients are derived from the quantile representation of dynamic distortion risk measures. A distinct line places Wasserstein ambiguity directly on the law of the return functional: \cite{jaimungaletal2022} assess a policy by the worst-case distribution within a Wasserstein ball around it under rank-dependent expected utility. Both are risk-sensitive and rely on quantile or distortion representations, whereas our objective is the risk-neutral worst-case return, with ambiguity acting directly on the transition kernels and the policy gradient obtained from directional derivatives of the robust Bellman recursion.

On the distributionally robust optimization side, a general convex framework and tractable reformulations are given in \cite{WiesemannKuhnSim2014}. For Wasserstein DRO, dual reformulations and performance guarantees were developed in \cite{EsfahaniKuhn2018}, while structural properties of worst-case distributions were analyzed in \cite{GaoKleywegt2023}. The duality result most relevant for our work is \cite{BlanchetMurthy2019}, which gives a scalar dual representation for Wasserstein DRO through optimal transport. Related links between Wasserstein robustness and regularization in statistical learning are established in \cite{BlanchetKangMurthy2019}. Sensitivity of robust values with respect to the Wasserstein radius is studied in \cite{BartlDrapeauOblojWiesel2021}, with related results under financial constraints in \cite{SaulduboisTouzi2024}, and dynamic robustness on path space in \cite{CompointSaulduboisTouzi2025}.

The closest policy-gradient works are
\cite{wang_policy_2022,WangHoPetrik2023,KumarDermanGeistLevyMannor2023}.
\cite{wang_policy_2022} exploit the closed-form structure of
$R$-contamination sets to derive robust policy-gradient and actor--critic
methods in discounted MDPs. \cite{WangHoPetrik2023} propose a double-loop
method that alternates between approximately computing a worst-case transition
kernel and applying the classical policy gradient under that kernel, with
global convergence guarantees in discounted tabular RMDPs.
\cite{KumarDermanGeistLevyMannor2023} derive an explicit robust policy gradient
for rectangular discounted RMDPs through a characterization of the worst
occupation measure. Our contribution is complementary. We combine the Wasserstein dual formulation of \cite{BlanchetMurthy2019} with a primal envelope argument over worst-case transition laws to derive exact one-sided sensitivities of the robust Bellman recursion without differentiating the optimal transition kernel. Our framework also differs from the robust Q-learning method of \cite{NeufeldSester2024}, which considers infinite-horizon discounted problems with deterministic policies and tabular updates. By contrast, we study finite-horizon problems with randomized parametric policies. In finite state--action spaces, our sensitivity recursion holds without additional stability assumptions and leads to an exact active-set ascent scheme. In compact continuous spaces, we provide complementary primal and dual sensitivity formulas under explicit stability conditions.  We further develop a function-approximation actor--critic implementation based on the resulting vector-valued recursion when the robust Bellman derivative is linear in the policy perturbation.
\begin{table}[H]
\centering
\caption{Comparison with related robust reinforcement learning and Wasserstein control methods.}
\vspace{-0.15cm}
\label{tab:related-work-comparison}
\resizebox{\textwidth}{!}{%
\begin{tabular}{lcccc}
\toprule
\textbf{Work} 
& \textbf{Ambiguity set} 
& \textbf{Criterion / horizon} 
& \textbf{Method} 
& \textbf{Wasserstein dual} \\
\midrule
\cite{wang_policy_2022} 
& $R$-contamination 
& finite horizon  
& policy gradient / actor--critic
& no \\
\cite{WangHoPetrik2023}
& rectangular transition uncertainty
& discounted
& policy gradient
& no \\
\cite{KumarDermanGeistLevyMannor2023}
& rectangular transition uncertainty
& finite horizon 
& policy gradient
& no \\
\cite{LiLanMurthySrikant2024}
& transition-kernel uncertainty
& average-cost
& policy mirror descent
& no \\
\cite{YangGuoXuLiuAnandkumar2023}
& distributional shift
& contextual bandit
& policy gradient
& problem-specific \\
\cite{NeufeldSester2024} 
& Wasserstein transition ambiguity
& discounted 
& Q-learning 
& yes \\
\cite{yang2021,KimYang2021} 
& Wasserstein LQ 
& finite-horizon control 
& Riccati 
&  penalized \\
\textbf{This paper} 
& Wasserstein transition kernels 
& finite horizon 
& policy gradient / actor--critic 
& yes \\
\bottomrule
\end{tabular}%
}
\end{table}
\paragraph{Our main contributions.}
We derive directional policy-gradient formulas for finite-horizon robust MDPs
with state--action-dependent Wasserstein ambiguity sets. Our approach combines
the dynamic programming recursion with Wasserstein duality and a primal
envelope argument, thereby avoiding differentiation of the implicit worst-case
transition kernel. Our main contributions are as follows.

\begin{enumerate}

\item \textbf{Dynamic programming and Wasserstein duality.}
We formulate a finite-horizon distributionally robust control problem with
randomized parametric policies and state--action rectangular Wasserstein
ambiguity sets. We establish the corresponding dynamic programming recursion
and use Wasserstein duality to express each one-step worst-case value as a
scalar maximization over the dual multiplier.

\item \textbf{Directional derivatives.}
We derive primal and dual envelope formulas for the one-sided directional
derivatives of the one-step worst-case value. In general, these derivatives are
the minimum or maximum expected downstream value derivative over the set of
worst-case transition laws
(Proposition~\ref{prop:primal-envelope}). In finite state--action spaces, this
set is a polytope obtained from the optimal face of a transport linear program,
and the resulting backward recursion holds without additional stability
assumptions (Theorem~\ref{thm: finite_case}). A complementary formula in terms
of the active dual multipliers and transport minimizers is obtained under an
explicit stability condition (Theorem~\ref{gradient J and F}).

\item \textbf{Vector policy gradient.}
Under the stability conditions of the dual formula and uniqueness of the active
dual multiplier and transport minimizer, the directional derivative is linear
in the policy perturbation and admits a vector-valued backward recursion. This
recursion combines the usual score-function term with the propagation of the
value-function derivative through the active worst-case transition law. When
the worst-case law is unique, it unrolls to the classical likelihood-ratio
policy-gradient formula under the frozen worst-case path measure.

\item \textbf{Exact finite-state ascent.}
In finite state--action spaces, we represent the robust objective as the
minimum of finitely many smooth policy-performance functions associated with
vertex transition selections. Based on their active sets, we construct an
exact max--min ascent scheme, prove monotonic improvement, and show that
accumulation points of the refinement iterates are directionally stationary.

\item \textbf{Numerical study.}
We evaluate the proposed recursions on finite-state benchmarks, where exact
robust dynamic programming provides a reference solution, and on a
continuous-control example with function approximation. The finite-state
experiments show that the proposed method closely recovers the exact benchmark
policies and that retaining the full backward derivative recursion is important
for policy accuracy. Additional experiments examine robustness under model misspecification.

\end{enumerate}

\begin{figure}[H]
\centering
\begin{tikzpicture}[
node distance=0.95cm and 1.2cm,
every node/.style={font=\scriptsize},
box/.style={
    rectangle, draw, rounded corners,
    align=center,
    minimum width=2.15cm,
    minimum height=0.72cm,
    inner sep=1.8pt
},
arrow/.style={->, thick}
]

\node[box] (sample) {Samples\\$(x_t,a_t,X_{t+1})$};

\node[box, right=of sample] (backup) {Robust Bellman recursion\\$\widehat G_t^\theta(x_t,a_t)$};

\node[box, right=of backup] (value) {Value critic\\$V_{\psi,t}\approx V_t^\theta$};

\node[box, below=of backup] (transport) {Transport selector\\$y_t^\star \in \arg\min_y\{f+V_{\psi,t+1}+\lambda c\}$};

\node[box, right=of transport] (grad) {Sensitivity critic\\$U_{\xi,t}\approx D_\theta V_t^\theta$};

\node[box, right=of grad] (actor) {Actor update\\$\theta \leftarrow \theta+\eta\,U_{\xi,0}$};

\draw[arrow] (sample) -- (backup);
\draw[arrow] (backup) -- (value);

\draw[arrow] (backup) -- (transport);
\draw[arrow] (transport) -- (grad);
\draw[arrow] (grad) -- (actor);

\draw[arrow, dashed] (value.south) to[bend right=20] (transport.north);
\draw[arrow, dashed] (grad.south) to[bend left=20] (transport.east);

\end{tikzpicture}
\caption{
Computational structure of the robust policy-gradient algorithm under uniqueness
of the active dual and transport optimizers. The value critic approximates the
Wasserstein-dual dynamic programming recursion, while the unique transport
selector $Y_{t,\theta,\lambda}^{*}$ drives the backward recursion for
$D_{\theta}V_t^{\theta}$ used in the actor update.
}
\label{fig:robust_ac_pipeline}
\end{figure}
\paragraph{Outline of the paper.}
The paper is organized as follows. Section \ref{sec: Problem formulation}
formulates the finite-horizon robust MDP with randomized policies and
set-valued transition kernels, defines the admissible path measures and robust
value functions, and establishes the corresponding dynamic programming
principle. Section \ref{sec: Actor-Critic algorithms} specializes the ambiguity
sets to Wasserstein balls, derives the associated dual recursion, and establishes
primal and dual formulas for the one-sided directional derivatives with respect
to the policy parameters. It also derives a vector-valued policy-gradient
recursion when the derivative of the one-step robust value is linear in the
policy perturbation, and introduces an exact active-set ascent method in finite
state--action spaces. Section \ref{sec: Numerical experiments} presents the
robust actor--critic implementation and reports numerical experiments on
benchmark models. Technical proofs and additional numerical results are
provided in the appendices.

\section{Problem formulation}\label{sec: Problem formulation}
\subsection{Notations}

\begin{itemize}
\item[$\bullet$] For a real Hilbert space $H$, we denote by $\langle \cdot,\cdot\rangle_H$ its scalar product and by
$\|h\|_H := \sqrt{\langle h,h\rangle_H}$ the associated norm. When there is no ambiguity on $H$, we
simply write $\langle \cdot,\cdot\rangle$ and $\|\cdot\|$.
\item[$\bullet$] Let $\Pc(B)$ denote the space of Borel probability measures on a Borel measurable space $B$.
\item[$\bullet$] We denote by $\Cc_b(B;\R)$ the space of bounded continuous real-valued
functions on $B$.
\item[$\bullet$] $\Cc_q(B; \R)$ denotes the set of continuous functions with polynomial growth at most of $q\in\R_+$.

\item[$\bullet$] For $S\subseteq\R^{d}$, $\mathrm{conv}(S)$ denotes its convex hull. For finite $S=\{v_1,\dots,v_m\}$,
$\mathrm{conv}(S)=\{\sum_{i}\alpha_i v_i:\alpha_i\ge0,\ \sum_i\alpha_i=1\}$.

\item [$\bullet$] Fix $q\ge 1$ and let $d : \Xc \times \Xc \to [0,+\infty)$ be a metric (typically derived from a Euclidean norm) on the compact space $\Xc$. The $q$-Wasserstein distance between two probability measures $\mu$ and $\mu'$ on $\Xc$ is defined by
\begin{align}
 W_q(\mu,\mu') :=
\left(
\inf_{\pi\in\Pi(\mu,\mu')}
\int_{\Xc\times\Xc} d(x,y)^q\,\pi(\d x,\d y)
\right)^{1/q},
\end{align}
where $\Pi(\mu,\mu')$ denotes the set of probability measures on
$\mathcal X\times\mathcal X$ with first marginal $\mu$ and second marginal
$\mu'$.

\item [$\bullet$] Let $T \in \N^*$. We denote by $\Tc$ the set of integers $\lbrace 0, 1,\ldots,T -1 \rbrace$ and by  $\bar{\Tc} :=  \Tc \cup \lbrace T \rbrace $. Given $t \in \Tc$, we also define $\Tc_t = \big \lbrace t, t+1,\ldots, T-1 \rbrace$.

\end{itemize}

\subsection{Preliminaries}

Let $\Xc \subset \R^d$ and $A \subset \R^m$ be two compact sets, endowed with their Borel
$\sigma$-algebras $\Sigma$ and $\Ac$, representing the state and action spaces, and let $T \in \N^*$
be a finite time horizon. We work on the canonical path space
$$
\Omega := (\Xc\times A)^T \times \Xc,
\quad
\omega=(x_0,a_0,\ldots,x_{T-1},a_{T-1},x_T)\in\Omega,
$$
with coordinate maps $X_{t}(\omega) := x_{t}$, $a_{t'}(\omega):= a_{t'}$ for
$(t,t') \in \bar{\Tc} \times \Tc$, and trajectory
map $S(\omega):=(X_0(\omega),a_0(\omega),\ldots,X_{T-1}(\omega),a_{T-1}(\omega),X_T(\omega))$.
At each time $t\in \Tc$, the set-valued transition kernel
$$
\Pc_t: \Xc\times A \rightrightarrows \Pc(\Xc),
\quad
(x,a)\mapsto \Pc_t(x,a)\subset \Pc(\Xc),
$$
captures distributional uncertainty: the conditional law of $X_{t+1}$ given $(X_t,a_t)=(x,a)$ is
only known to belong to $\Pc_t(x,a)$. Next, we define the set of admissible policies.
\begin{Definition}\label{rmk : admissible policy}
A  policy $\pi = (\pi_t)_{t \in \Tc}$ is said to be admissible if it verifies the following conditions:

\begin{itemize}
    \item [(i)] For any $t \in \Tc$, the mapping $\Xc \ni x \mapsto \pi_t(x,\d a) \in \Pc(A)$ is Borel-measurable.
    \item [(ii)] For any $t \in \Tc$, the map $\Xc \ni x \mapsto \pi_t(x,\cdot) \in \Pc(A)$ is continuous for the topology of weak convergence, i.e., for any $x \in \Xc$ and any sequence $(x_n)_n$ converging to $x$, the following holds true    \begin{align}
    \int_A g(a)\, \pi_t(x_n,\d a) \underset{n\to+\infty}{\longrightarrow} \int_A g(a)\, \pi_t(x,\d a ), \quad \forall g \in \Cc(A;\R).
\end{align}
    \item [(iii)] Fix $q \ge 1$. There exists a constant $C \geq 1$ such that for any $(t,x) \in \Tc \times \Xc$, we have 
\begin{align}
    \int_A \lVert a \rVert^q \pi_t(x,\d a) \leq C \big( 1 +  \lVert x \rVert^q).
\end{align}
\end{itemize}
\end{Definition}

We  denote by $\Pi := \big \lbrace \pi = (\pi_t)_{t \in \Tc}: \text{$\pi$ is admissible} \big \rbrace$ the set of admissible randomized control policies. For each $t \in \Tc$, we consider a set-valued transition kernel
$$\Pc_t : \Xc \times A \;\rightrightarrows\; \Pc(\Xc).$$
For every $(x,a) \in \Xc \times A$, the set $\Pc_t(x,a)$ represents the
collection of admissible probability laws for the next-state variable at time $t$
when the system is in state $x$ and action $a$ is applied.

\begin{Assumption}\label{assumptions : set of measures} Assume the following holds:
\begin{itemize} 
\item[(i)]  We endow $\Pc(\Xc)$ with the $q$-Wasserstein metric $W_q$ associated with the ground metric $d$ on $\Xc$. 
For nonempty compact sets $K_1,K_2 \subset \Pc(\Xc)$, we define the Hausdorff distance induced by $W_q$ by 
\begin{align*}
d_H(K_1,K_2)
:= \sup\Big\{
\sup_{\mu\in K_1}\inf_{\nu\in K_2} W_q(\mu,\nu),\;
\sup_{\nu\in K_2}\inf_{\mu\in K_1} W_q(\mu,\nu)
\Big\}.
\end{align*}
For any $t \in \Tc$, the correspondence
\begin{align*}
\Xc \times A \ni (x,a) \rightrightarrows \Pc_t(x,a) \subset \Pc(\Xc)
\end{align*}
is assumed to be nonempty, compact-valued, and continuous in the sense that
\begin{align*}
(x_n,a_n) \to (x,a)
\quad \Longrightarrow \quad
d_H\big(\Pc_t(x_n,a_n), \Pc_t(x,a)\big) \to 0.
\end{align*}
\item[(ii)] There exists a constant $C_P \geq 1$ such that for any $(t,x,a) \in \Tc \times \Xc \times A$ and $\P_t(x,a,\cdot) \in \Pc_t(x,a)$,
\begin{align}
    \int_{\Xc}(1+ \lVert x' \rVert^q )\P_t(x,a, \d x')\leq C_P ( 1 + \lVert x \rVert^q + \lVert a \rVert^q)
\end{align}

\end{itemize} 
\end{Assumption}
We consider a set of randomized policies $\pi=(\pi_t)_{t\in\Tc}\in\Pi$ (see Definition \ref{rmk : admissible policy}),
with $a_t$ drawn from $\pi_t(X_t,\d a)$. For a given initial state $x\in \Xc$ and policy
$\pi\in\Pi$, we denote by $\Bc_{x,\pi}\subset \Pc(\Omega)$ (see Definition \ref{defi:path_measures_space}) the set of path measures
consistent with $\pi$ and an admissible selection of transition laws from $\Pc_t(X_t,a_t)$. The agent chooses a policy $\pi$, and nature responds with the worst-case measure $\P$ in
$\Bc_{x,\pi}$. 
\begin{Definition}\label{defi:path_measures_space}
Under Assumption \ref{assumptions : set of measures}, and for any initial state
$x \in \Xc$ and any policy $\pi \in \Pi$, we define the corresponding set of
admissible probability measures on $\Omega$ as
\begin{equation*}
\begin{aligned}\label{eq : Admissible set of measures}
    \Bc_{x, \pi} := \Big \lbrace   \P &\in \Pc(\Omega) : \P(\d \omega) = \delta_x(\d x_0) \Prod_{t=0}^{T-1} \pi_t(x_t ,\d a_t) \Prod_{t=0}^{T-1} \P_t(x_t,a_t, \d x_{t+1})   \notag,~ \text{with}~~ \\
    & \P_t : (\Xc \times A)  \to \Pc(\Xc) \text{ Borel-measurable and $\P_t(x,a) \in \Pc_t(x,a)$}, \text{ for all $(x,a)\in \Xc \times A$ and $t \in \Tc$} \Big \rbrace.
\end{aligned}
 \end{equation*}
We also define, for each $t \in \bar{\Tc}$, the truncated set 
$\Bc_{x,\pi \mid t}$ as
\begin{equation*}
\begin{aligned}
    \Bc_{x,\pi | t} :&= \Big \lbrace \P \in \Pc\big((\Xc \times A)^{T-t} \times \Xc \big) : \P(\d \omega) = \delta_{x}(\d x_{t}) \prod_{l=t}^{T-1} \pi_l(x_l, \d a_l) \prod_{l=t}^{T-1} \P_l(x_l, a_l, \d x_{l+1})  \notag,~ \text{with}  \\
    & \qquad \P_l : (\Xc \times A) \to \Pc(\Xc) \text{ Borel-measurable and $\P_l(x,a) \in \Pc_l(x,a)$},  \text{ for all $(x,a)\in \Xc \times A$ and $l \in \Tc_t$} \Big \rbrace,
\end{aligned}
 \end{equation*}
where we set the empty product to be equal to $1$ by convention. By a slight abuse of notation, the symbol $\mathbb P$ is used both for path
measures and, with a time index, for transition kernels. When written as
$\mathbb P\in\mathcal P(\Omega)$ it denotes a measure on trajectories, whereas
$\mathbb P_l(x,a,\cdot)$ denotes a one-step transition probability on
$\mathcal X$.
\end{Definition}
\begin{Remark}\label{rmk : recutangular_ambiguity_sets} 
In our setting, define
$$
\mathcal P_l^0
:=
\left\{
\mathbb P_l:\mathcal X\times A\to\mathcal P(\mathcal X)
:
\mathbb P_l \text{ is Borel-measurable and }
\mathbb P_l(x,a,\cdot)\in\mathcal P_l(x,a)
\text{ for all }(x,a)\in\mathcal X\times A
\right\}.
$$
We say that the ambiguity set is rectangular if $\mathcal B=\prod_{l\in\mathcal T}\mathcal P_l^0.$ Rectangularity means that admissible transition kernels can be selected
separately at each time, with no additional coupling constraints across stages.
This product structure is standard in robust MDPs and is the structural condition
that makes dynamic programming possible. In particular, for any function $\Phi:\mathcal B\to\mathbb R$,
$$
\inf_{(\mathbb P_l)_{l\in\mathcal T}\in\mathcal B}
\Phi((\mathbb P_l)_{l\in\mathcal T})
=
\inf_{\mathbb P_{T-1}\in\mathcal P_{T-1}^0}
\cdots
\inf_{\mathbb P_0\in\mathcal P_0^0}
\Phi((\mathbb P_l)_{l\in\mathcal T}).
$$
This identity follows from the Cartesian-product structure. In the dynamic
programming proof, it is combined with measurable selection to reduce the robust optimization problem to a sequence of one-step minimizations.
\end{Remark}

\subsection{Robust MDP setting} 
Consider Borel measurable functions
$$
f:\mathcal T\times\mathcal X\times A\times\mathcal X\to\mathbb R,
\qquad
g:\mathcal X\to\mathbb R,
$$
representing the running and terminal rewards, respectively. Recall that the canonical process on the trajectory space $\Omega$ is $S = (X_0,a_0,X_1,a_1,\ldots,X_{T-1},a_{T-1},X_T)
$. We define the total reward mapping $R : \Omega \to \mathbb R$ by
\begin{align}\label{eq : value function V}
    R : \Omega \ni \omega = (x_0, a_0 , x_1,a_1 , \ldots, x_{T-1} ,  a_{T-1}, x_T) \mapsto \sum_{t=0}^{T-1} f(t, x_t, a_t,x_{t+1}) + g(x_T).
\end{align}
For a fixed admissible policy $\pi\in\Pi$, its robust value function is defined,
for $t\in\bar{\mathcal T}$ and $x\in\mathcal X$, by
$$
V_t^\pi(x)
:=
\inf_{\mathbb P\in\mathcal B_{x,\pi|t}}
\mathbb E^{\mathbb P}
\left[
\sum_{l=t}^{T-1} f(l,X_l,a_l,X_{l+1})+g(X_T)
\right],
$$
with the convention that $\sum_{l=T}^{T-1}=0$. The corresponding optimal robust value function is
$$
V_t(x)
:=
\sup_{\pi\in\Pi}
\inf_{\mathbb P\in\mathcal B_{x,\pi|t}}
\mathbb E^{\mathbb P}
\left[
\sum_{l=t}^{T-1} f(l,X_l,a_l,X_{l+1})+g(X_T)
\right].
$$
In particular, the value of the original problem is $V_0(x)$.

 We now make the following assumptions on the running and terminal cost functions.
\begin{Assumption}\label{assumption : reward functions}Assume the following: 
\begin{itemize}
\item[(i)] For any $t \in \Tc $, the mapping 
\begin{align}
     \Xc \times A \times \Xc \ni (x,a, x') \mapsto f(t,x,a,x') \in \R
\end{align}
is continuous. In particular, for any $q>0$,
$f(t,\cdot,\cdot,\cdot)\in C_q(\mathcal X\times A\times\mathcal X;\mathbb R).$

\item[(ii)] Moreover, the mapping 
\begin{align}
    \Xc \ni x \mapsto g(x),
\end{align}
is continuous. Moreover, for any $q>0$, $g\in C_q(\mathcal X;\mathbb R).$
 \end{itemize}
\end{Assumption}
\begin{Remark}
Compactness of $\mathcal X$ is used as a convenient sufficient condition in
several parts of the analysis: to obtain attainment in $\mathcal F^\lambda$ and
in the argmin set $Y^*_{t,\theta,\lambda}$, boundedness of $c$ in
\eqref{eq:Phi-lambda-Lip}, applicability of Berge's maximum theorem in Theorem
\ref{thm : DPP + P^*}, and uniform boundedness of $V_t^\pi$ in dominated
convergence arguments. These arguments may extend to non-compact state spaces under additional
coercivity, inf-compactness, continuity, and moment assumptions. For example,
when $c(x,y)=\|x-y\|^q$ with $q>p$, coercivity can help recover attainment,
provided the value functions have at most $p$-polynomial growth. Moment bounds,
inf-compact versions of Berge's theorem, and polynomial growth estimates can
then replace the compactness-based arguments. We impose compactness here to
avoid these extra technical conditions.
\end{Remark}
\subsection{Dynamic programming principle}
\begin{Theorem}\label{thm : DPP + P^*} Under Assumptions \ref{assumptions : set of measures} and \ref{assumption : reward functions}, we have that:
\begin{itemize}
\item[(i)] The following dynamic programming principle holds true, for all $(t,x) \in \Tc\times\Xc$ and $\pi \in \Pi$,
\begin{align}\label{eq : Dynamic programming for robust value functions} 
\left\{\begin{aligned}  
       V^{\pi}_T(x) &= g(x),  \\
       V^{\pi}_{t}(x)  &= \E_{a \sim \pi_t(x,\cdot)} \Big[ \underset{\P_{t} \in \Pc_t(x,a)}{\inf}\E^{\P_{t}} \big[ f \big(t,x, a,X_{t+1} \big) + V^{\pi}_{t+1}(X_{t+1})\big] \Big].
    \end{aligned}
        \right.
\end{align}
Moreover, it implies that for any $t \in \bar{\Tc}$ and any $\pi \in\Pi$, the value function $V_t^{\pi}$ lies in $\Cc(\Xc;\R)$.

\item[(ii)] There exists a Borel-measurable selector $\P_t^{*,\pi}:\mathcal X\times A\to\mathcal P(\mathcal X)$
such that $\P_t^{*,\pi}(x,a)\in\mathcal P_t(x,a)$ and
\begin{equation}
\label{eq : local optimization problem}
\inf_{\P_t\in\mathcal P_t(x,a)}
\int_{\mathcal X}
\left[
f(t,x,a,x')+V_{t+1}^\pi(x')
\right]
\P_t(\d x')
=
\int_{\mathcal X}
\left[
f(t,x,a,x')+V_{t+1}^\pi(x')
\right]
\P_t^{*,\pi}(x,a,\d x'),
\end{equation}
for all $(t,x,a)\in\mathcal T\times\mathcal X\times A$. We then define the path measure $\mathbb P^{*,\pi}\in\mathcal P(\Omega)$ by
\begin{align}\label{eq : Definition P^*}
\mathbb P^{*,\pi}(\d\omega)
:=
\delta_{x_0}(\d x_0)
\prod_{t=0}^{T-1}\pi_t(x_t,\d a_t)
\prod_{t=0}^{T-1}\P_t^{*,\pi}(x_t,a_t,\d x_{t+1}).
\end{align}
Consequently,
\begin{align}\label{eq : representation of V^pi}
V_0^\pi(x_0)
=
\inf_{\mathbb P\in\mathcal B_{x_0,\pi}}
\mathbb E^{\mathbb P}[R]
=
\mathbb E^{\mathbb P^{*,\pi}}[R].
\end{align}
\end{itemize}
\end{Theorem}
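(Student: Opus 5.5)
We argue by backward induction on $t$, proving simultaneously that the recursion \eqref{eq : Dynamic programming for robust value functions} holds, that $V_t^\pi\in\Cc(\Xc;\R)$, and that a measurable minimizing selector exists at each stage. Define $W_T:=g$ and, given a continuous $W_{t+1}$, set
\begin{align}
\Phi_t(x,a,\mu) &:= \int_{\Xc}\big[f(t,x,a,x')+W_{t+1}(x')\big]\mu(\d x'),\\
h_t(x,a) &:= \inf_{\mu\in\Pc_t(x,a)}\Phi_t(x,a,\mu),\qquad W_t(x):=\int_A h_t(x,a)\,\pi_t(x,\d a).
\end{align}
The goal is to show $W_t=V_t^\pi$ for all $t$; the base case $t=T$ is immediate, since $\Bc_{x,\pi|T}=\{\delta_x\}$.

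\textbf{Step 1: continuity and selection.} The function $(x,a,x')\mapsto f(t,x,a,x')+W_{t+1}(x')$ is continuous on the compact set $\Xc\times A\times\Xc$, hence bounded and uniformly continuous. On a compact space $W_q$ metrizes weak convergence, so $\Phi_t$ is jointly continuous on $\Xc\times A\times\Pc(\Xc)$. By Assumption \ref{assumptions : set of measures}(i), the correspondence $(x,a)\rightrightarrows\Pc_t(x,a)$ is nonempty, compact-valued and Hausdorff-continuous, hence both upper and lower hemicontinuous. Berge's maximum theorem then gives that $h_t$ is continuous and that the argmin correspondence is nonempty, compact-valued and upper hemicontinuous, so its graph is closed. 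The measurable selection theorem of Kuratowski--Ryll-Nardzewski (or the measurable maximum theorem, Aliprantis--Border 18.19) yields a Borel selector $\P_t^{*,\pi}(x,a)\in\Pc_t(x,a)$ attaining the infimum. This proves \eqref{eq : local optimization problem} once we know $W_{t+1}=V^\pi_{t+1}$. Continuity of $W_t$ follows from continuity and boundedness of $h_t$ together with Definition \ref{rmk : admissible policy}(ii).

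\textbf{Step 2: $V_t^\pi=W_t$.} For the lower bound, fix any $\P\in\Bc_{x,\pi|t}$ generated by kernels $(\P_l)_{l\ge t}$. Conditioning on $(X_{t+1})$ and using Fubini, the tail expectation from $t+1$ onward is an element of the family defining $V^\pi_{t+1}(X_{t+1})$, so it is at least $W_{t+1}(X_{t+1})$ by the induction hypothesis. Hence
\[
\E^{\P}[\cdot]\ \ge\ \int \pi_t(x,\d a)\,\Phi_t\big(x,a,\P_t(x,a)\big)\ \ge\ W_t(x).
\]
For the upper bound, concatenate the selectors $\P^{*,\pi}_l$ for $l\ge t$. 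These selectors are Borel and admissible, so the resulting measure lies in $\Bc_{x,\pi|t}$; this is exactly where rectangularity (Remark \ref{rmk : recutangular_ambiguity_sets}) is used. Its expectation equals $W_t(x)$ by backward telescoping: at each stage $l$ the integral $\int[f+W_{l+1}]\,\d\P^{*,\pi}_l$ equals $h_l$, and integrating against $\pi_l$ gives $W_l$. This establishes (i) and the selector in (ii). Taking $t=0$ and defining $\P^{*,\pi}$ as in \eqref{eq : Definition P^*} gives $\E^{\P^{*,\pi}}[R]=W_0(x_0)=V^\pi_0(x_0)$, which is \eqref{eq : representation of V^pi}.

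\textbf{Main obstacle.} The delicate point is the lower bound: the infimum over path measures must be interchanged with integration over $a$ and $X_{t+1}$. The tail kernels $(\P_l)_{l>t}$ in a general $\P$ are fixed functions of the state and action, not of the history before $t+1$. So the conditional tail law given $X_{t+1}=y$ is itself the measure in $\Bc_{y,\pi|t+1}$ built from those same kernels, and the pointwise bound by $V^\pi_{t+1}(y)$ applies with no measurability issues. Measurability of $y\mapsto$ (tail expectation) follows from Borel measurability of the kernels. The upper bound needs no $\varepsilon$-optimal selection because Step 1 gives exact minimizers. The remaining technical check is the measurable-selection step, which I would handle with the closed-graph/measurable-maximum theorem.
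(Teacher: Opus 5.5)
Your proof is correct and follows the same route as the paper's sketch. Both use backward induction, Berge's maximum theorem for continuity of the one-step robust value, a measurable-maximum (Kuratowski--Ryll-Nardzewski) Borel selector, and rectangularity to concatenate the stage-wise selectors into an admissible path measure. The one difference is that the paper reduces the path-wise infimum to iterated one-step minimizations by invoking the interchange of infimum and integration (Theorem \ref{thm : Interchange of Min and Int}), whereas you obtain this directly: the lower bound comes from the Markovian form of the admissible kernels, and the upper bound from the exact measurable minimizers. This is a valid and somewhat more self-contained argument.
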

\begin{proof}
    The proof follows the robust dynamic programming argument of \cite{neufeld_markov_2023}, whose setting covers finite-horizon problems on compact Borel state--action spaces with nonempty, compact-valued, Hausdorff-continuous rectangular ambiguity correspondences; randomized admissible policies are accommodated by disintegrating the path measure over the action variable at each stage. The argument is consistent
with the classical framework of \cite{Iyengar2005}. The reduction of the trajectory-wise infimum over $\Bc_{x,\pi|t}$ to iterated one-step minimizations combines the rectangular product structure of Remark \ref{rmk : recutangular_ambiguity_sets} with the interchange of infimum and integration (Theorem \ref{thm : Interchange of Min and Int}) and the measurable-selection theorem (Theorem \ref{thm : measurable_selection}), applied to the normal integrand defined by the one-step robust value. By Assumption \ref{assumptions : set of measures}, the correspondence
$(x,a)\mapsto\mathcal P_t(x,a)$ is nonempty, compact-valued, and Hausdorff
continuous. Since $x'\mapsto f(t,x,a,x')+V_{t+1}^\pi(x')$ is continuous and bounded on the compact set $\mathcal X$, the map
\[
(x,a,\mathbb P)\mapsto
\int_{\mathcal X}
\left[f(t,x,a,x')+V_{t+1}^\pi(x')\right]\mathbb P(dx')
\]
is continuous on the graph of $\mathcal P_t$. Berge's maximum theorem gives
continuity of the one-step robust value, and the measurable maximum theorem
gives a Borel selector $\mathbb P_t^{*,\pi}(x,a)$. Backward induction yields the displayed
Bellman recursion and the representation by the product measure
$\mathbb P^{*,\pi}$.
\end{proof}

\begin{Remark}
    Note that we can define also the $Q$-function of this Robust MDP such that
    \begin{align}
        V_t^{\pi}(x) = \E_{a \sim \pi_t(x,\cdot)} \big[ Q^{\pi}_t(x,a) \big]
    \end{align}
for every $x\in\Xc$ and $a\in A$, where the Robust $Q$-function can be defined as 
\begin{align}\label{eq : Dynamic programming for robust Q functions}
\left\{\begin{aligned}  
       Q^{\pi}_T(x,a) &= g(x),  \\
       Q^{\pi}_{t}(x,a)  &:= \underset{\P_{t} \in \Pc_{t}(x,a)}{\inf}\E^{\P_{t}} \big[ f(t,x,a, X_{t+1}) +   V^{\pi}_{t+1}(X_{t+1})\big], \quad t \in \Tc.
    \end{aligned}
        \right.
\end{align}

\end{Remark}

While the representation \eqref{eq : representation of V^pi} is useful
theoretically, it is not directly tractable in general, since the \eqref{eq : local optimization problem} require optimization over infinite-dimensional
sets of probability measures. This motivates the Wasserstein specification and
dual reformulation developed in the next section.

\section{Robust policy gradient representation}\label{sec: Actor-Critic algorithms}
\subsection{Parametric Wasserstein ambiguity sets}
In this section, we specify the ambiguity sets $\mathcal P_t(x,a)$, for
$(t,x,a)\in\mathcal T\times\mathcal X\times A$, in order to obtain a tractable
form of the robust optimization problem \eqref{eq : value function V}. We work
with distributional ambiguity measured by the Wasserstein distance
$W_q$, for some $q\in\mathbb N^*$. For each $t\in\mathcal T$, let $\P_t^0$ be a reference transition kernel such
that
$$
\mathcal X\times A\ni (x,a)\mapsto \P_t^0(x,a,\cdot)\in\mathcal P(\mathcal X)
$$
is Borel-measurable and satisfies $\int_{\mathcal X}\|x'\|^q\,\P_t^0(x,a,\d x')<+\infty,$ for all $(x,a)\in\mathcal X\times A.$
For $\varepsilon > 0$ and $t \in \Tc $, we define the following ambiguity set
\begin{align}\label{eq : Wasserstein ambiguity set}
    \Xc \times A \ni (x,a) \mapsto  \mathbb{B}_t^{\varepsilon,q} \big(\P_t^0(x,a,\cdot)\big) := \big \lbrace \P \in \Pc(\Xc) : \Wc_q \big(\P,\P_t^0(x,a,\cdot) \big) \leq \varepsilon \big \rbrace.
\end{align}
We recall from \cite{neufeld_markov_2023} (see Proposition 3.1) that the mapping \eqref{eq : Wasserstein ambiguity set} satisfies
Assumption \ref{assumptions : set of measures}.

We next restrict the admissible randomized policies to a parametrized class.
Let $\nu$ be a reference measure on $A$ and let $\Theta$ be a compact parameter
set. For each $\theta\in\Theta$, we assume that the policy
$\pi^\theta=(\pi_t^\theta)_{t\in\mathcal T}$ admits a density with respect to
$\nu$, namely
$$
\pi^\theta_t(x,\d a)
=
\pi^\theta_t(x,a)\,\nu(\d a)\in\mathcal P(A),
\quad \forall(t,x)\in\mathcal T\times\mathcal X,
$$
where we define the mapping 
\begin{align}
    \Theta \ni \theta \mapsto \pi^{\theta} \in \Fc := \big \lbrace \pi : \Tc \times \Xc \times A \to \R^+: \int_{A} \pi_t(x,a) \nu (\d a ) = 1, \quad \forall(t,x) \in \Tc \times \Xc \big \rbrace.
\end{align}

\begin{Remark} Examples of admissible policies include:
    \begin{itemize}
        \item  When $A$ is finite, a natural choice is the softmax parametrization defined as
    \begin{align}
        \pi^{\theta}_t(x,a) := \frac{e^{\mu_{\theta}(t,x,a)}}{\sum_{a' \in A} e^{\mu_{\theta}(t,x,a')}}, \quad \forall a \in A,
    \end{align}
    with respect to the counting measure $\nu(\d a) = \sum_{a' \in A} \delta_{a'}(\d a)$.
        \item When $A$ is continuous, we can choose for instance a Gaussian distribution
        $$
\pi_t^\theta(x,a)
=
\frac{
\exp\left(
-\frac{\|a-\mu_\theta(t,x)\|^2}{2\sigma_\theta^2}
\right)
}{
\int_A
\exp\left(
-\frac{\|b-\mu_\theta(t,x)\|^2}{2\sigma_\theta^2}
\right)\d b
},
\quad \forall a\in A.
$$
        with respect to the Lebesgue measure on $A$.
    \end{itemize}
\end{Remark}
\subsection{Duality results}

 In this framework, the problem \eqref{eq : value function V} can be rewritten as
\begin{align}\label{eq : value function V Gradient}
    \Xc \ni x  \mapsto  V(x) :=\underset{\theta \in \Theta}{\sup}~ \underset{ \P \in \Bc_{x,\pi^{\theta}}}{\inf} \E^{\P} \big[ R(S) \big].
\end{align}
The supremum over $\theta\in\Theta$ is understood as the supremum over the
parametrized policy class $\{\pi^\theta:\theta\in\Theta\}$.
\begin{Assumption}\label{assump:regularity_pi_theta} Assume the following holds:
    \begin{itemize}
        \item[(i)] For each $t\in\Tc$, $(x,a)\mapsto \P_t^0(x,a,\cdot)\in\Pc(\Xc)$ is continuous for the weak topology, i.e.,
        $$\int_{\Xc}\varphi(x')\,\P_t^0(x_n,a_n,\d x')
\underset{n\to +\infty}{\to}
\int_{\Xc}\varphi(x')\,\P_t^0(x,a,\d x'),
\quad \forall \varphi\in C_b(\Xc),$$
for any sequence $(x_n,a_n) \underset{n \to +\infty}{\to} (x,a)$.
        \item[(ii)] For each $t\in\Tc$ the map
$(\theta,x)\mapsto \pi^\theta_t(x,\cdot)\in\Pc(A)$ is weakly continuous on $\Theta\times\Xc$, and $\theta\mapsto \pi^\theta_t(x,\cdot)$ is $C^1(\Theta)$ in the sense that
for all $\varphi\in C(A)$ the map $\theta\mapsto \int_A \varphi(a)\,\pi_t^\theta(x,\d a)$ is $C^1(\Theta)$ with derivative continuous in $(\theta,x)$.
Moreover, the score is uniformly bounded, such that
$$\sup_{\theta\in\Theta} \sup_{(t,x,a)\in\Tc\times\Xc\times A}\big\|\nabla_\theta\log \pi_t^\theta(x,a)\big\|\le C_\pi<+\infty.$$
    \end{itemize}
\end{Assumption}
Let $\mu_0\in\mathcal P(\mathcal X)$ denote the initial distribution of $X_0$.
Let $\P_t^{*,\theta}$ denote the measurable worst-case transition selector
associated with the policy $\pi^\theta$, as obtained in Theorem
\ref{thm : DPP + P^*}. Define the corresponding path measure
$\mathbb P^{*,\theta}\in\mathcal P(\Omega)$ by
$$
\mathbb P^{*,\theta}(\d\omega)
:=
\mu_0(\d x_0)
\prod_{t=0}^{T-1}
\pi_t^\theta(x_t,\d a_t)
\prod_{t=0}^{T-1}\P_t^{*,\theta}(x_t,a_t,\d x_{t+1}).
$$
Then, by \eqref{eq : representation of V^pi}, the robust performance criterion
is
$$
J(\theta)
:=
\mathbb E^{\mathbb P^{*,\theta}}[R].
$$

In the following, we write indifferently $V^{\theta} = V^{\pi^{\theta}}$.
We are interested in a suitable representation of the following mapping
\begin{align}\label{eq : J(theta)}
    \Theta \ni \theta \mapsto J(\theta) = \E_{X_0\sim \mu_0} \big[  V^{\theta}_0(X_0) \big].
\end{align}
Note that we no longer optimize over the full admissible policy
class $\Pi$. Instead, we restrict attention to the parametric family
$\{\pi^\theta:\theta\in\Theta\}$. The gradient results characterize local
sensitivity of $J(\theta)$ within this family. They do not imply global
optimality over $\Pi$ unless the parametrization is sufficiently rich and the optimization over $\Theta$ is solved globally.
\begin{Remark}\label{untractability_P_comment}
We recall that our objective is to obtain a tractable representation of the
derivative of the robust performance criterion $J(\theta)$ defined in
\eqref{eq : J(theta)}. The main difficulty is that the worst-case transition
selector depends on the policy parameter $\theta$ through the value function.

Indeed, for $t\in\mathcal T$, the dynamic programming recursion can be written as
$$
V_t^\theta(x)
=
\int_A\int_{\mathcal X}
\left[
f(t,x,a,x')+V_{t+1}^\theta(x')
\right]
\P_t^{*,\theta}(x,a,\d x')
\pi^\theta_t(x,a)\,\nu(\d a).
$$
A direct differentiation of this expression would require differentiability, in
a suitable weak sense, of the selector
$\theta\mapsto \P_t^{*,\theta}(x,a,\cdot)$, together with domination conditions
allowing the derivative to pass through the integrals. Such regularity is
generally difficult to verify, since $\P_t^{*,\theta}$ is defined implicitly as
an optimizer of the local robust problem and depends on
$V_{t+1}^\theta$. We emphasize, however, that envelope-type arguments never require
differentiating the optimizer map itself. The genuine obstruction is that the
worst-case law may be non-unique and may switch abruptly with $\theta$. We
therefore develop two complementary routes that avoid differentiating
$\P_t^{*,\theta}$: a dual formulation, which replaces the optimization over
measures by a scalar dual problem, and a primal envelope formula
(Section \ref{subsec:primal}), which differentiates directly through the
$\theta$-independent Wasserstein ball and handles non-unique worst-case laws.
\end{Remark}

We now specialize to the Wasserstein ambiguity sets defined in
\eqref{eq : Wasserstein ambiguity set}. To tackle these ambiguity sets, we use the duality results from \cite{BlanchetMurthy2019}. We first introduce the auxiliary quantities needed
for this dual formulation.

\begin{Proposition}[Theorem 1, \cite{BlanchetMurthy2019}]\label{prop:duality}
    Let $c : \Xc  \times \Xc \ni (x,y) \mapsto c(x,y) = \lVert x- y \rVert^q$ and $\lambda \geq 0$. We define the operator $\Fc^{\lambda}$ as 
\begin{align}\label{dual_operator}
\mathcal F^\lambda(V)(x)
:=
\sup_{y\in\mathcal X}
\left\{
V(y)-\lambda c(x,y)
\right\},
\quad \forall x\in\mathcal X.
\end{align}
Then, the following duality result holds for any $V \in C(\mathcal X;\mathbb R)$:
\begin{align}\label{eq : duality result Wasserstein distance}
    \underset{\P \in \mathbb{B}^{\varepsilon,q}(\P^0)}{\inf}~ \E^{\P} \big[V(X) \big] = \underset{\lambda \geq 0}{\sup }~  \E^{\P^0} \big[ - \Fc^{\lambda}(-V)(X) \big]- \varepsilon^q \lambda,
\end{align}
where $X\sim \P^0$. 
\end{Proposition}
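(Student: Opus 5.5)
The plan is to prove the duality \eqref{eq : duality result Wasserstein distance} in two halves: weak duality by a direct coupling estimate, and strong duality by a Lagrangian/minimax argument over couplings. Compactness of $\Xc$ and continuity of $V$ make every object bounded and continuous, which is what the minimax step needs.

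Both sides are first rewritten in terms of couplings. Write $\Pi(\P^0,\cdot)$ for the couplings $\gamma\in\Pc(\Xc\times\Xc)$ with first marginal $\P^0$. Since $W_q$ is attained on compact $\Xc$, the primal value equals
\[
\inf\Big\{\int V(y)\,\gamma(\d x,\d y):\ \gamma\in\Pi(\P^0,\cdot),\ \int c\,\d\gamma\le\varepsilon^q\Big\}.
\]
Next, $-\Fc^\lambda(-V)(x)=\inf_{y}\{V(y)+\lambda c(x,y)\}$. This function is continuous in $x$, because it is an infimum over the compact set $\Xc$ of a jointly continuous function, so the dual integrand is well defined.

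\textbf{Weak duality.} Fix a feasible coupling $\gamma$ and any $\lambda\ge0$. Then
\[
\int V(y)\,\d\gamma\ \ge\ \int \big(V(y)+\lambda c(x,y)\big)\,\d\gamma-\lambda\varepsilon^q\ \ge\ \E^{\P^0}\!\big[-\Fc^\lambda(-V)(X)\big]-\lambda\varepsilon^q .
\]
Taking the infimum over $\gamma$ and the supremum over $\lambda$ gives primal $\ge$ dual.

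\textbf{Strong duality.} The plan is to write the primal as $\inf_\gamma\sup_{\lambda\ge0}L(\gamma,\lambda)$, with
\[
L(\gamma,\lambda)=\int \big(V+\lambda c\big)\,\d\gamma-\lambda\varepsilon^q .
\]
This equality holds because the inner supremum is $+\infty$ exactly when $\gamma$ is infeasible. The set $\Pi(\P^0,\cdot)$ is convex and weakly compact: it is tight because $\Xc$ is compact, and closed because the marginal constraint is preserved under weak limits. The map $L$ is affine and weakly continuous in $\gamma$, since $V$ and $c$ are bounded and continuous, and it is affine in $\lambda$. Sion's minimax theorem then allows swapping the infimum and the supremum.

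It remains to compute the inner infimum for fixed $\lambda$:
\[
\inf_{\gamma\in\Pi(\P^0,\cdot)}\int\big(V(y)+\lambda c(x,y)\big)\,\d\gamma=\int\inf_y\big\{V(y)+\lambda c(x,y)\big\}\,\P^0(\d x).
\]
The inequality $\ge$ is immediate. For $\le$, choose a Borel measurable minimizer $y^*(x)$ of $y\mapsto V(y)+\lambda c(x,y)$, which exists by the measurable maximum theorem (as in Theorem~\ref{thm : DPP + P^*}). Then take $\gamma=(\mathrm{id},y^*)_\#\P^0$, which lies in $\Pi(\P^0,\cdot)$. This yields the claimed dual.

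\textbf{Main obstacle.} The hardest step is justifying the minimax exchange. Sion's theorem needs compactness on the $\gamma$ side and lower semicontinuity of $L$ in $\gamma$; compactness of $\Xc$ supplies both, which is presumably why the paper can cite \cite{BlanchetMurthy2019} directly. If this is done more carefully, I would check that $\varepsilon>0$ gives Slater feasibility through $\gamma=(\mathrm{id},\mathrm{id})_\#\P^0$, which has transport cost $0<\varepsilon^q$. This ensures the supremum over $\lambda$ is effectively taken over a bounded interval.
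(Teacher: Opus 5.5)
Your proof is correct, but it does not follow the paper's route. The paper gives no argument of its own: it invokes Theorem~1 of \cite{BlanchetMurthy2019} with transport budget $\delta=\varepsilon^q$. It observes that continuity of $V$ on the compact $\mathcal{X}$ supplies the semicontinuity and integrability hypotheses of that theorem, and that the primal infimum is attained by weak compactness of the ball.

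You instead prove the identity directly in the compact setting, in three steps. First, weak duality follows from the coupling estimate. Second, you apply Sion's minimax theorem to the Lagrangian on $\Pi(\mathbb{P}^0,\cdot)\times[0,\infty)$. Compactness sits on the minimization side, which is exactly where Sion requires it, and the value $\sup_{\lambda}L(\gamma,\lambda)=+\infty$ for infeasible $\gamma$ causes no trouble. Third, you obtain the interchange $\inf_{\gamma}\int(V+\lambda c)\,\mathrm{d}\gamma=\int\inf_{y}\{V(y)+\lambda c(x,y)\}\,\mathbb{P}^0(\mathrm{d}x)$ via the pushforward of $\mathbb{P}^0$ under a Borel minimizer.

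Your route gives a short, self-contained argument in which every object is elementary: the $c$-transform is continuous in $x$, and the selector coupling is explicit. None of the measurability machinery of the general theorem is needed. The citation, in turn, buys generality: Polish non-compact spaces, lower semicontinuous costs, and merely semicontinuous integrands. In that setting weak compactness of the coupling set is lost, and your Sion step would not apply as stated.

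Your closing Slater remark is not needed for the exchange, since compactness of the coupling set alone suffices. Its real content is confining the dual maximizers to a bounded interval using $c(x,x)=0$ and $\varepsilon>0$. The paper establishes this separately in Lemma~\ref{lem:compact_dual_maximizer_set}.
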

In the notation of \cite{BlanchetMurthy2019}, the radius constraint
$\Wc_q(\P,\P^0)\le\varepsilon$ corresponds to the transport-cost budget
$\delta=\varepsilon^q$ for the cost $c(x,y)=\|x-y\|^q$, which produces the term
$-\varepsilon^q\lambda$ in \eqref{eq : duality result Wasserstein distance};
continuity of $V$ on the compact set $\Xc$ (hence upper semicontinuity and
integrability) suffices for their Theorem 1, and the infimum on the left-hand
side is attained since the ball is weakly compact and $\P\mapsto\E^\P[V(X)]$ is
weakly continuous.

The duality result \eqref{eq : duality result Wasserstein distance} reduces the optimization problem from a set of probability measures to a one-dimensional optimization over the dual variable $\lambda$.

\begin{Remark}
    \label{G_dpp}
    Note that the DPP defined in \eqref{eq : Dynamic programming for robust value functions} can be written as
\begin{align}\left\{\begin{aligned}
       V_T^\theta(x)&=g(x),  \\
       V_t^\theta(x)&=\E_{a\sim\pi^\theta_t(x,\cdot)}\Big[\sup_{\lambda\ge 0}\Big(
\E_{X\sim \P_{t}^0(x,a,\cdot)}\big[-\Fc^{\lambda}(-f(t,x,a,\cdot)-V^\theta_{t+1}(\cdot))(X)\big]
-\varepsilon^q\lambda
\Big)\Big],~ t=T-1,\dots,0.
    \end{aligned}
        \right.
\end{align}
\end{Remark}

\begin{Remark}
\label{prop:neural_operator_Phi}
The operator $\mathcal F^\lambda$ may also be approximated by operator-learning
methods. For example, Theorem 4.5 in \cite{bayraktar2025dno} shows that, under
compactness assumptions on $\mathcal X$ and on the relevant class of input
functions, $\mathcal F^\lambda$ can be approximated uniformly over
$\lambda\in[0,\Lambda]$. More precisely, if
$\mathcal O\subset C(\mathcal X;\mathbb R)$ is compact in the supremum norm,
then for every $\varepsilon>0$ there exist integers $N_1,N_2$ and parameters
$\theta$ such that
$$
\sup_{\lambda\in[0,\Lambda]}\sup_{b\in\mathcal O}
\|\mathcal F_\theta^\lambda(b)-\mathcal F^\lambda(b)\|_\infty
\leq \varepsilon.
$$
This suggests a possible offline approximation of the robust Bellman operator in
high-dimensional implementations, although this approximation is not used in
the theoretical results below.
\end{Remark}
\subsection{Measurability of selectors}\label{Measurability and uniqueness of selectors}
The Wasserstein dual formulation involves two optimization problems: a pointwise
minimization over the transported next state $y$ and a scalar maximization over
the dual multiplier $\lambda$. We first isolate the optimizer correspondences
that will be used in the policy-gradient analysis.

For $t\in\mathcal T$, $\theta\in\Theta$, $(x,a)\in\mathcal X\times A$,
$x'\in\mathcal X$, and $\lambda\geq0$, define
$$
Y_{t,\theta,\lambda}^*(x,a;x')
:=
\argmin_{y\in\mathcal X}
\left\{
f(t,x,a,y)+V_{t+1}^\theta(y)+\lambda c(x',y)
\right\}.
$$
The corresponding scalar dual objective is
$$
F_t^\theta(\lambda;x,a)
:=
\int_{\mathcal X}
\inf_{y\in\mathcal X}
\left\{
f(t,x,a,y)+V_{t+1}^\theta(y)+\lambda c(x',y)
\right\}
\P_t^0(x,a,\d x')
-
\varepsilon^q\lambda,
$$
and the set of optimal dual multipliers is
$$
\Lambda_t^{*,\theta}(x,a)
:=
\argmax_{\lambda\geq0}F_t^\theta(\lambda;x,a).
$$

Here $\P_t^0(x,a,\cdot)$ is the nominal transition law. Thus, after fixing
$(x,a)$, the Wasserstein dual formulation replaces the optimization over
admissible next-state distributions by a scalar optimization over $\lambda$ and
a pointwise transport problem in $y$. The sets
$Y_{t,\theta,\lambda}^*(x,a;x')$ and $\Lambda_t^{*,\theta}(x,a)$ encode the
corresponding optimizers and determine the sensitivity of the robust Bellman
operator.

The results below focus on measurability, which is sufficient for the
directional derivative formulas to be well-defined. Sufficient conditions for
uniqueness of the relevant optimizers are given in Appendix \ref{app:unique}.

\begin{Proposition}\label{eq:uniform-cont}
Under Assumptions \ref{assumption : reward functions} and
\ref{assump:regularity_pi_theta}, for every $\theta\in\Theta$ and every
$t\in\bar{\mathcal T}$, $V_t^\theta\in C(\mathcal X;\mathbb R)$.
\end{Proposition}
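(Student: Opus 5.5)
We argue by backward induction on $t\in\bar{\mathcal T}$, using the Wasserstein specialization of the dynamic programming recursion of Theorem \ref{thm : DPP + P^*}. At $t=T$ we have $V_T^\theta=g$, which is continuous by Assumption \ref{assumption : reward functions}(ii). Now fix $t\in\mathcal T$ and suppose $V_{t+1}^\theta\in C(\mathcal X;\mathbb R)$. Since $\mathcal X$ is compact, $V_{t+1}^\theta$ is bounded.

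\textbf{Continuity of $Q_t^\theta$.} Define
$$
h(x,a,x') := f(t,x,a,x')+V_{t+1}^\theta(x').
$$
This function is continuous on the compact set $\mathcal X\times A\times\mathcal X$, hence bounded and uniformly continuous. Consider
$$
\Psi(x,a,\mathbb P):=\int_{\mathcal X} h(x,a,x')\,\mathbb P(\mathrm d x').
$$
We show $\Psi$ is jointly continuous on $\mathcal X\times A\times\mathcal P(\mathcal X)$, where $\mathcal P(\mathcal X)$ carries the $W_q$ topology; on a compact space this coincides with the weak topology. Take $(x_n,a_n,\mathbb P_n)\to(x,a,\mathbb P)$ and split
$$
|\Psi(x_n,a_n,\mathbb P_n)-\Psi(x,a,\mathbb P)|\le \sup_{x'}|h(x_n,a_n,x')-h(x,a,x')| + \Big|\int h(x,a,\cdot)\,\mathrm d(\mathbb P_n-\mathbb P)\Big|.
$$
The first term vanishes by uniform continuity of $h$, the second by weak convergence.

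Next, the correspondence $(x,a)\rightrightarrows \mathbb B_t^{\varepsilon,q}(\mathbb P_t^0(x,a,\cdot))$ is nonempty, compact-valued and Hausdorff continuous. Nonemptiness and compactness hold because the ball contains its center and is a closed subset of the compact space $(\mathcal P(\mathcal X),W_q)$. Hausdorff continuity comes from the result of \cite{neufeld_markov_2023} recalled above, together with Assumption \ref{assump:regularity_pi_theta}(i). Indeed, weak continuity of $\mathbb P_t^0$ gives $W_q$-continuity on compact $\mathcal X$, and for balls of common radius $\varepsilon>0$ one has $d_H(\mathbb B(\mu),\mathbb B(\mu'))\le W_q(\mu,\mu')$ by mixing/geodesic arguments. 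Berge's maximum theorem then shows that
$$
Q_t^\theta(x,a)=\min_{\mathbb P\in\mathbb B_t^{\varepsilon,q}(\mathbb P_t^0(x,a,\cdot))}\Psi(x,a,\mathbb P)
$$
is continuous on $\mathcal X\times A$. Being continuous on a compact set, it is also bounded.

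\textbf{Integration against the policy.} We have
$$
V_t^\theta(x)=\int_A Q_t^\theta(x,a)\,\pi_t^\theta(x,\mathrm d a).
$$
Take $x_n\to x$ and split again:
$$
\Big|\int Q_t^\theta(x_n,\cdot)\,\mathrm d\pi_t^\theta(x_n)-\int Q_t^\theta(x,\cdot)\,\mathrm d\pi_t^\theta(x)\Big|\le \sup_a |Q_t^\theta(x_n,a)-Q_t^\theta(x,a)| + \Big|\int Q_t^\theta(x,\cdot)\,\mathrm d(\pi_t^\theta(x_n)-\pi_t^\theta(x))\Big|.
$$
The first term tends to zero by uniform continuity of $Q_t^\theta$ on the compact set $\mathcal X\times A$. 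The second tends to zero by weak continuity of $x\mapsto\pi_t^\theta(x,\cdot)$, which is Assumption \ref{assump:regularity_pi_theta}(ii). Hence $V_t^\theta\in C(\mathcal X;\mathbb R)$, and the induction closes.

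\textbf{Main obstacle.} The delicate step is the Hausdorff continuity of the Wasserstein ball correspondence, which Berge's theorem needs. If we prefer not to rely on \cite{neufeld_markov_2023}, there is a dual route. By Proposition \ref{prop:duality},
$$
Q_t^\theta(x,a)=\sup_{\lambda\ge0}F_t^\theta(\lambda;x,a).
$$
For each fixed $\lambda$, $F_t^\theta(\lambda;\cdot,\cdot)$ is continuous: the inner infimum is continuous in $(x,a,x')$ by compactness, and $\mathbb P_t^0$ is weakly continuous. We can restrict to $\lambda\in[0,\Lambda]$ with $\Lambda=2\|h\|_\infty/\varepsilon^q$, because larger $\lambda$ cannot beat the value at $\lambda=0$. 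The family $\lambda\mapsto F_t^\theta(\lambda;x,a)$ is $\operatorname{diam}(\mathcal X)^q+\varepsilon^q$-Lipschitz, so a supremum over a compact interval of a jointly continuous function is continuous. This gives the continuity of $Q_t^\theta$ directly.
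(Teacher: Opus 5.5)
Your proof is correct. The fallback in your last paragraph is essentially the paper's proof, but your main argument takes a different route.

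\textbf{The paper's route.} It inducts backward on $t$ and restricts $\lambda$ to $[0,\Lambda]$ via Lemma~\ref{lem:compact_dual_maximizer_set}. It then uses the $\|c\|_\infty$-Lipschitz dependence of $\mathcal F^\lambda$ on $\lambda$ and its $1$-Lipschitz dependence on the input function. Berge's theorem is applied once, to get continuity of the inner envelope in $(x,a,x')$. The result is integrated against the weakly continuous $\mathbb P_t^0$, and Berge is applied a second time over the compact interval $[0,\Lambda]$ before integrating against $\pi_t^\theta$.

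\textbf{Your main route.} You apply Berge directly to the primal problem on the graph of the ball correspondence $(x,a)\rightrightarrows\mathbb B_t^{\varepsilon,q}(\mathbb P_t^0(x,a,\cdot))$. This is the argument the paper only sketches in the proof of Theorem~\ref{thm : DPP + P^*}, whose statement already asserts continuity of $V_t^\pi$.

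\textbf{Trade-offs.} The primal route is shorter. It uses nothing about the Wasserstein geometry beyond Hausdorff continuity of the ambiguity correspondence, so it works for any family satisfying Assumption~\ref{assumptions : set of measures}. Its cost is that this one nontrivial ingredient is outsourced. The dual route needs only weak continuity of $\mathbb P_t^0$ plus elementary estimates. The paper reuses those same estimates (compact dual interval, Lipschitz bounds in $\lambda$ and in the value function) for the measurable dual selector and for the sensitivity results, which is presumably why it takes that route.

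\textbf{One point to tighten.} The inequality $d_H(\mathbb B(\mu),\mathbb B(\mu'))\le W_q(\mu,\mu')$ is not justified in the generality you state it.
The geodesic argument needs $(\mathcal P(\mathcal X),W_q)$ to be geodesic, for instance $\mathcal X$ convex. The mixture argument yields exactly this bound only for $q=1$.

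For $q>1$ and nonconvex $\mathcal X$ (a finite set, say), here is what mixing gives. Take $\nu$ in the $\varepsilon$-ball around $\mu$, set $\delta:=W_q(\mu,\mu')$ and $L:=W_q(\nu,\mu')\le\varepsilon+\delta$. If $L>\varepsilon$, put $s:=1-(\varepsilon/L)^q$ and $\nu_s:=(1-s)\nu+s\mu'$. Joint convexity of $W_q^q$ gives $W_q(\nu_s,\mu')\le\varepsilon$ and $W_q(\nu_s,\nu)\le\big((\varepsilon+\delta)^q-\varepsilon^q\big)^{1/q}$.

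This modulus still tends to $0$ as $\delta\to0$, and that is all Berge needs. So your conclusion stands, as it also does via the cited result of \cite{neufeld_markov_2023}, but the Lipschitz-type bound should be replaced by this modulus or restricted to $q=1$ or convex $\mathcal X$.
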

\begin{proof}
    See Appendix \ref{app: proof_cont}. 
\end{proof}

\begin{Lemma}
\label{lem:compact_dual_maximizer_set}
Assume that Assumption \ref{assumption : reward functions} holds, that
$c(x,x)=0$ for all $x\in\mathcal X$, and that $\varepsilon>0$. Then, for every
$(t,\theta,x,a)\in\mathcal T\times\Theta\times\mathcal X\times A$,
$$
F_t^\theta(\lambda;x,a)\to -\infty
\qquad\text{as }\lambda\to\infty.
$$
Consequently, the supremum over $\lambda\geq0$ is attained on a compact
interval. More precisely, if the value functions $V_{t+1}^\theta$ are uniformly
bounded over $(t,\theta)$, then there exists $\Lambda<+\infty$, independent of
$(t,\theta,x,a)$, such that
$$
\Lambda_t^{*,\theta}(x,a)
=
\argmax_{\lambda\in[0,\Lambda]}F_t^\theta(\lambda;x,a).
$$
\end{Lemma}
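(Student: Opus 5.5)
The argument bounds the inner infimum from above by choosing $y=x'$, which is allowed since $c(x',x')=0$. For every $x'\in\mathcal X$ and $\lambda\ge0$,
\[
\inf_{y\in\mathcal X}\big\{f(t,x,a,y)+V_{t+1}^\theta(y)+\lambda c(x',y)\big\}\le f(t,x,a,x')+V_{t+1}^\theta(x').
\]
By Assumption~\ref{assumption : reward functions}, $f$ is continuous on the compact set $\mathcal X\times A\times\mathcal X$, so it is bounded. By Proposition~\ref{eq:uniform-cont}, $V_{t+1}^\theta$ is continuous on the compact set $\mathcal X$, so it is bounded as well. Set $M_{t,\theta}:=\|f(t,\cdot)\|_\infty+\|V_{t+1}^\theta\|_\infty<\infty$. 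Integrating against the probability measure $\P_t^0(x,a,\cdot)$ gives
\[
F_t^\theta(\lambda;x,a)\le M_{t,\theta}-\varepsilon^q\lambda .
\]
Because $\varepsilon>0$, the right-hand side tends to $-\infty$ as $\lambda\to\infty$, which proves the first claim.

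For attainment, a lower bound at $\lambda=0$ is needed. There the inner infimum is at least $-M_{t,\theta}$, so $F_t^\theta(0;x,a)\ge -M_{t,\theta}$. Hence any $\lambda$ with $M_{t,\theta}-\varepsilon^q\lambda<-M_{t,\theta}$ is strictly worse than $\lambda=0$. This happens for every $\lambda>\Lambda:=2M_{t,\theta}/\varepsilon^q$, so all near-maximizers lie in $[0,\Lambda]$.

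Next, $\lambda\mapsto F_t^\theta(\lambda;x,a)$ is concave on $[0,\infty)$. The inner infimum is an infimum of affine functions of $\lambda$, hence concave, integration preserves concavity, and $-\varepsilon^q\lambda$ is linear. To get continuity on $[0,\Lambda]$, note that $c$ is bounded by $\operatorname{diam}(\mathcal X)^q$ on the compact set. So for each $x'$ the inner infimum is Lipschitz in $\lambda$ with constant $\operatorname{diam}(\mathcal X)^q$, and integrating preserves this. A continuous function on the compact interval $[0,\Lambda]$ attains its maximum, and this maximum equals the supremum over $\lambda\ge0$ by the previous paragraph. Therefore $\Lambda_t^{*,\theta}(x,a)$ is nonempty and $\Lambda_t^{*,\theta}(x,a)=\argmax_{\lambda\in[0,\Lambda]}F_t^\theta(\lambda;x,a)$. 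The inclusion "$\supseteq$" needs care: a maximizer over $[0,\Lambda]$ has value at least $F(0)$, while every $\lambda>\Lambda$ has value strictly below $F(0)$, so it is also a global maximizer.

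Uniformity is immediate. If $\sup_{t,\theta}\|V_{t+1}^\theta\|_\infty<\infty$, then $M:=\max_t\|f(t,\cdot)\|_\infty+\sup_{t,\theta}\|V_{t+1}^\theta\|_\infty$ is finite, since $\mathcal T$ is finite. Then $\Lambda:=2M/\varepsilon^q$ does not depend on $(t,\theta,x,a)$. The only step that needs care is measurability of $x'\mapsto\inf_y\{\cdots\}$, so that the integral in $F_t^\theta$ makes sense. This is not a real obstacle: the integrand is jointly continuous in $(x',y)$ and $y$ ranges over a compact set, so the infimum is continuous in $x'$ by Berge's theorem, or directly as an infimum of an equicontinuous family.
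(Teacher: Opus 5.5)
Your proof is correct and follows the same route as the paper. The choice $y=x'$ (using $c(x',x')=0$) gives $F_t^\theta(\lambda;x,a)\le C-\varepsilon^q\lambda$, and the trivial lower bound $F_t^\theta(0;x,a)\ge -C$ excludes all large $\lambda$. Continuity of $\lambda\mapsto F_t^\theta(\lambda;x,a)$, which you verify through the Lipschitz bound in $\lambda$ as the paper does in its appendix, then gives attainment on $[0,\Lambda]$. Your explicit $\Lambda=2M/\varepsilon^q$ and your check of both inclusions of the argmax identity are small refinements.

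One minor point concerns how you bound $V_{t+1}^\theta$. You cite Proposition~\ref{eq:uniform-cont}, but the paper's proof of that proposition itself invokes this lemma. It is cleaner to say, as the paper does, that boundedness follows directly from the finite-horizon Bellman recursion with bounded $f,g$, and that continuity comes from Theorem~\ref{thm : DPP + P^*}.
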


\begin{proof}
By Assumption \ref{assumption : reward functions} and compactness of
$\mathcal X$ and $A$, the functions $f$ and $g$ are bounded. Since the horizon
is finite, the robust Bellman recursion implies that the value functions
$V_s^\theta$ are uniformly bounded. Hence there exists $C<+\infty$ such that $|f(t,x,a,x')+V_{t+1}^\theta(x')|\leq C$ for all $(t,\theta,x,a,x')\in [0,T]\times \Theta\times \Xc\times A\times \Xc$. For any $x'\in\mathcal X$, the admissible choice $y=x'$ gives
$$
\inf_{y\in\mathcal X}
\left\{
f(t,x,a,y)+V_{t+1}^\theta(y)+\lambda c(x',y)
\right\}
\leq
f(t,x,a,x')+V_{t+1}^\theta(x'),
$$
because $c(x',x')=0$. Therefore,
$$
F_t^\theta(\lambda;x,a)
\leq
\int_{\mathcal X}
\left[
f(t,x,a,x')+V_{t+1}^\theta(x')
\right]
\P_t^0(x,a,\d x')
-
\varepsilon^q\lambda
\leq
C-\varepsilon^q\lambda.
$$
Since $\varepsilon>0$, this implies
$$
F_t^\theta(\lambda;x,a)\to -\infty
\qquad\text{as }\lambda\to\infty.
$$

Moreover, by the same boundedness, there exists $C_0<+\infty$ such that $F_t^\theta(0;x,a)\geq -C_0$
for all $(t,\theta,x,a)\in [0,T]\times \Theta\times \Xc\times A$. Choose $\Lambda>0$ such that $C-\varepsilon^q\Lambda<-C_0.$ Then, for every $\lambda\geq\Lambda$,
$$
F_t^\theta(\lambda;x,a)
\leq
C-\varepsilon^q\lambda
\leq
C-\varepsilon^q\Lambda
<
-C_0
\leq
F_t^\theta(0;x,a).
$$
Thus no maximizer lies in $[\Lambda,\infty)$. By continuity of
$\lambda\mapsto F_t^\theta(\lambda;x,a)$, the maximum is attained
on $[0,\Lambda]$, and
$$
\Lambda_t^{*,\theta}(x,a)
=
\argmax_{\lambda\in[0,\Lambda]}F_t^\theta(\lambda;x,a).
$$
\end{proof}

Such compact reductions are a standard consequence of Wasserstein dual
formulations in compact settings. Here, compactness of $\mathcal X$ makes the
transport cost bounded, while the term $-\varepsilon^q\lambda$ forces the dual
objective to $-\infty$ as $\lambda\to\infty$. Therefore the maximization over
$\lambda\geq0$ can be restricted to a compact interval. This is consistent with
the Wasserstein DRO duality framework used in
\cite{BlanchetMurthy2019,EsfahaniKuhn2018,GaoKleywegt2023}.
\begin{Proposition}
\label{prop:measurable_inner_selector}
Fix $t\in\mathcal T$ and $\theta\in\Theta$. Under Assumption \ref{assumption : reward functions}, the correspondence
$$
(x,a,x',\lambda)
\mapsto
Y_{t,\theta,\lambda}^*(x,a;x')
$$
from $\mathcal X\times A\times\mathcal X\times[0,\Lambda]$ into
$\mathcal X$ is nonempty, compact-valued, and Borel measurable. In particular,
it admits a Borel measurable selector. If $Y_{t,\theta,\lambda}^*(x,a;x')$ is a singleton for every
$(x,a,x',\lambda)$, then the unique selector is Borel measurable.
\end{Proposition}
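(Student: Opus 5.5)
The plan is to treat $Y_{t,\theta,\lambda}^*$ as the argmin correspondence of a continuous objective over a fixed compact set, and then invoke Berge's maximum theorem together with the measurable maximum theorem (the selection result already used in Theorem \ref{thm : DPP + P^*}).

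\textbf{Step 1: continuity of the objective.} Fix $(t,\theta)$ and set
$$
h(x,a,x',\lambda,y):=f(t,x,a,y)+V_{t+1}^\theta(y)+\lambda c(x',y)
$$
on $\mathcal X\times A\times\mathcal X\times[0,\Lambda]\times\mathcal X$. Joint continuity follows from three facts:
\begin{itemize}
\item $f(t,\cdot,\cdot,\cdot)$ is continuous by Assumption \ref{assumption : reward functions}(i);
\item $V_{t+1}^\theta\in C(\mathcal X;\mathbb R)$ by Proposition \ref{eq:uniform-cont}, or by Theorem \ref{thm : DPP + P^*}(i), with $V_T^\theta=g$ continuous;
\item $(\lambda,x',y)\mapsto\lambda\|x'-y\|^q$ is continuous as a product of continuous maps.
\end{itemize}
The domain is a product of compact metric spaces, hence compact metric and in particular Polish.

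\textbf{Step 2: nonemptiness and compactness.} For each fixed parameter $z=(x,a,x',\lambda)$, the map $y\mapsto h(z,y)$ is continuous on the compact set $\mathcal X$. So the minimum is attained and $Y^*_{t,\theta,\lambda}(x,a;x')\neq\emptyset$. The argmin set is the preimage of the closed set $\{\min_y h(z,y)\}$ under a continuous map, hence a closed subset of a compact set, hence compact.

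\textbf{Step 3: upper hemicontinuity.} The constraint correspondence $z\mapsto\mathcal X$ is constant, nonempty, compact-valued and continuous. Berge's maximum theorem then gives two conclusions:
\begin{itemize}
\item the value $m(z):=\min_y h(z,y)$ is continuous;
\item $z\mapsto Y^*(z)$ is upper hemicontinuous with compact values.
\end{itemize}
Equivalently, its graph $\{(z,y): h(z,y)=m(z)\}$ is closed, as the zero set of the continuous map $h-m$.

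\textbf{Step 4: Borel measurability.} A compact-valued correspondence into a compact metric space with closed graph is upper hemicontinuous, and such a correspondence is Borel measurable in the sense of (weak) measurability. Concretely, for a closed $F\subset\mathcal X$,
$$
\{z:Y^*(z)\cap F\neq\emptyset\}
$$
is the projection of the compact set $\mathrm{Gr}(Y^*)\cap(\cdot\times F)$, hence compact and hence Borel. Since the correspondence is closed-valued and measurable, the Kuratowski--Ryll-Nardzewski theorem (the measurable selection result, Theorem \ref{thm : measurable_selection}) yields a Borel selector. Alternatively, one can cite the measurable maximum theorem (Aliprantis--Border, Thm. 18.19) directly, since $h$ is Carathéodory.

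\textbf{Step 5: the singleton case.} If $Y^*(z)=\{y^*(z)\}$ for all $z$, then upper hemicontinuity of a single-valued correspondence is ordinary continuity, so $y^*$ is continuous and in particular Borel. Alternatively, $\{z:y^*(z)\in F\}=\{z:Y^*(z)\cap F\neq\emptyset\}$ is Borel for closed $F$, which already gives Borel measurability.

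\textbf{Main obstacle.} The only delicate point is justifying continuity of $V_{t+1}^\theta$, which the objective needs. This is supplied by Proposition \ref{eq:uniform-cont}. Everything else is standard: the compactness of $\mathcal X$ and of $[0,\Lambda]$ makes the projection argument in Step 4 immediate.
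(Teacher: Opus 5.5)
Your proposal is correct, and it reaches measurability by a somewhat different route from the paper's. The paper shares your Steps 1--2: joint continuity of $f(t,x,a,y)+V_{t+1}^\theta(y)+\lambda c(x',y)$, attainment, and compactness of the argmin. It then simply notes that this objective is a Carath\'eodory integrand and invokes the measurable maximum theorem (Lemma \ref{Lemma : measurable_maximum_Theorem}) to get weak measurability and a selector. In the singleton case it observes that any measurable selector must coincide with the unique minimizer.

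Your main argument instead uses joint continuity together with compactness of the parameter space $\mathcal X\times A\times\mathcal X\times[0,\Lambda]$. This gives a closed graph, and lower inverses of closed sets are then projections of compact sets, to which you apply Kuratowski--Ryll-Nardzewski.

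\emph{What your route buys.} It proves more than the statement asks for: upper hemicontinuity of $Y^*_{t,\theta,\lambda}$ and, in the singleton case, continuity (not merely Borel measurability) of the unique selector. The paper records that continuity only later, in a remark citing Berge's theorem.

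\emph{What the paper's route buys.} It is shorter and needs less. The measurable maximum theorem requires only measurability in the parameter and continuity in $y$, so it would still apply if $f$ were merely measurable in $(x,a)$ or if the parameter set were not compact.

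One small bridging line is missing in your Step 4. Theorem \ref{thm : measurable_selection} is stated for weakly measurable correspondences, whereas you checked lower inverses of closed sets. Add that every open $G\subset\mathcal X$ is a countable union of closed sets $F_n$. Hence $\{z:Y^*(z)\cap G\neq\emptyset\}=\bigcup_n\{z:Y^*(z)\cap F_n\neq\emptyset\}$ is Borel.
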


\begin{proof}
By Assumption \ref{assumption : reward functions}, the map
$(x,a,y)\mapsto f(t,x,a,y)$ is continuous. Moreover,
$V_{t+1}^\theta$ is continuous on $\mathcal X$, and
$c(x',y)=\|x'-y\|^q$ is continuous in $(x',y)$. Therefore
$$
(x,a,x',\lambda,y)
\mapsto
f(t,x,a,y)+V_{t+1}^\theta(y)+\lambda c(x',y)
$$
is jointly continuous on $\mathcal X\times A\times\mathcal X\times[0,\Lambda]\times\mathcal X.$
In particular, it is a Carathéodory integrand with parameter
$(x,a,x',\lambda)$ and decision variable $y$. For fixed $(x,a,x',\lambda)$, continuity in $y$ and compactness of
$\mathcal X$ imply that the minimum is attained. Hence
$Y_{t,\theta,\lambda}^*(x,a;x')$ is nonempty and compact.

Since the objective is a Carathéodory integrand, the measurable maximum theorem,
Lemma \ref{Lemma : measurable_maximum_Theorem}, implies that the argmin
correspondence is weakly measurable and admits a measurable selector. If the
argmin is singleton-valued, every selector coincides with the unique minimizer.
Therefore the unique minimizer map is Borel measurable.
\end{proof}

\begin{Proposition}
\label{prop:measurable_dual_selector}
Fix $t\in\mathcal T$ and $\theta\in\Theta$. Under
Assumption \ref{assumption : reward functions}, the correspondence
$$
(x,a)
\mapsto
\argmax_{\lambda\in[0,\Lambda]}F_t^\theta(\lambda;x,a)
$$
is nonempty, compact-valued, and weakly measurable. In particular, it admits a
Borel measurable selector. If the maximizer is unique for every $(x,a)$, then the unique dual selector is Borel measurable.
\end{Proposition}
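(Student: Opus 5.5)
The plan is to mirror the proof of Proposition \ref{prop:measurable_inner_selector} and show that $(x,a,\lambda)\mapsto F_t^\theta(\lambda;x,a)$ is a Carathéodory integrand on $(\mathcal X\times A)\times[0,\Lambda]$. Concretely, it should be Borel measurable in $(x,a)$ for fixed $\lambda$ and continuous in $\lambda$ for fixed $(x,a)$. Lemma \ref{Lemma : measurable_maximum_Theorem} then gives weak measurability of the argmax correspondence and a Borel selector, and nonemptiness and compactness follow from continuity on the compact interval $[0,\Lambda]$. Lemma \ref{lem:compact_dual_maximizer_set} guarantees that this interval carries every maximizer over $\lambda\ge0$.

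\textbf{Step 1.} Introduce the inner value
\[
h(x,a,x',\lambda):=\min_{y\in\mathcal X}\{f(t,x,a,y)+V_{t+1}^\theta(y)+\lambda c(x',y)\}.
\]
The objective is jointly continuous on $\mathcal X\times A\times\mathcal X\times[0,\Lambda]\times\mathcal X$, because $f$ is continuous, $V_{t+1}^\theta$ is continuous by Proposition \ref{eq:uniform-cont}, and $c$ is continuous. Since $\mathcal X$ is compact, Berge's maximum theorem shows that $h$ is jointly continuous, and it is bounded by a constant $C+\Lambda\,\mathrm{diam}(\mathcal X)^q$.

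\textbf{Step 2 (continuity in $\lambda$).} For $\lambda,\lambda'\in[0,\Lambda]$ we have $|h(x,a,x',\lambda)-h(x,a,x',\lambda')|\le|\lambda-\lambda'|\,\mathrm{diam}(\mathcal X)^q$, since an infimum of functions that are uniformly Lipschitz in $\lambda$ is Lipschitz. Integrating against $\P_t^0(x,a,\d x')$ and subtracting $\varepsilon^q\lambda$ shows that $\lambda\mapsto F_t^\theta(\lambda;x,a)$ is Lipschitz, uniformly in $(x,a)$.

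\textbf{Step 3 (measurability in $(x,a)$).} This is the step that needs care, because $\P_t^0$ is only assumed Borel measurable as a kernel. For fixed $\lambda$, $(x,a,x')\mapsto h(x,a,x',\lambda)$ is bounded and continuous, hence bounded Borel. The standard monotone class property of Borel kernels then shows that $(x,a)\mapsto\int h(x,a,x',\lambda)\,\P_t^0(x,a,\d x')$ is Borel. If Assumption \ref{assump:regularity_pi_theta}(i) is available, we get more. Weak continuity of $\P_t^0$, combined with uniform convergence $h(x_n,a_n,\cdot,\lambda)\to h(x,a,\cdot,\lambda)$ (which follows from uniform continuity of $h$ on a compact set), gives continuity in $(x,a)$, and in fact joint continuity of $F_t^\theta$. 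In that case Berge's theorem would even give upper hemicontinuity of the argmax.

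\textbf{Step 4.} Apply Lemma \ref{Lemma : measurable_maximum_Theorem} with parameter $(x,a)$ and decision variable $\lambda\in[0,\Lambda]$. This yields that the argmax correspondence is weakly measurable, nonempty, and compact valued, and that it admits a Borel measurable selector. When the maximizer is unique for every $(x,a)$, any selector coincides with the unique maximizer map, which is therefore Borel.
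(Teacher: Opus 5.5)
Your proposal is correct and follows essentially the same route as the paper: continuity of the inner value via Berge's theorem on the compact $\mathcal X$, then the measurable maximum theorem (Lemma \ref{Lemma : measurable_maximum_Theorem}) applied on the constant compact set $[0,\Lambda]$ supplied by Lemma \ref{lem:compact_dual_maximizer_set}. The only difference is your Step 3. The paper uses weak continuity of $\mathbb P_t^0$ (Assumption \ref{assump:regularity_pi_theta}(i)) and Theorem \ref{thm : Continuity under weak convergence} to get joint continuity of $F_t^\theta$, whereas you obtain the needed measurability in $(x,a)$ from Borel measurability of the kernel alone. That is slightly more economical at that step, although the continuity of $V_{t+1}^\theta$ you import from Proposition \ref{eq:uniform-cont} still rests on that assumption.
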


\begin{proof}
By Lemma \ref{lem:compact_dual_maximizer_set}, the maximization over
$\lambda\ge0$ may be restricted to a compact interval $[0,\Lambda]$. We first show that $(x,a,\lambda)\mapsto F_t^\theta(\lambda;x,a)$ is continuous.
For fixed $(x,a,\lambda)\in\mathcal X\times A\times[0,\Lambda]$,
$$
F_t^\theta(\lambda;x,a)
=
\int_{\mathcal X}
\inf_{y\in\mathcal X}
\left\{
f(t,x,a,y)+V_{t+1}^\theta(y)+\lambda c(x',y)
\right\}
\P_t^0(x,a,\ud x')
-
\varepsilon^q\lambda.
$$
By Assumption \ref{assumption : reward functions}, Theorem \ref{thm : DPP + P^*},
and continuity of $c(x',y)=\|x'-y\|^q$, the map
$$
(x,a,x',\lambda,y)
\mapsto
f(t,x,a,y)+V_{t+1}^\theta(y)+\lambda c(x',y)
$$
is continuous on the compact set $\mathcal X\times A\times\mathcal X\times[0,\Lambda]\times\mathcal X.$
Therefore, by Berge's maximum theorem, 
$$
(x,a,x',\lambda)
\mapsto
\inf_{y\in\mathcal X}
\left\{
f(t,x,a,y)+V_{t+1}^\theta(y)+\lambda c(x',y)
\right\}
$$
is continuous. Since $\P_t^0(x,a,\cdot)$ is weakly continuous by
Assumption \ref{assump:regularity_pi_theta}(i), and since the integrand above is
continuous and bounded on the compact domain, Theorem \ref{thm : Continuity under weak convergence} implies that $(x,a,\lambda)
\mapsto
F_t^\theta(\lambda;x,a)$
is continuous. For each fixed $(x,a)\in\mathcal X\times A$, the map
$\lambda\mapsto F_t^\theta(\lambda;x,a)$ is continuous on the compact interval
$[0,\Lambda]$. Hence the maximum is attained, and the argmax set is nonempty and
compact. Since $F_t^\theta$ is continuous, it is in particular a Carathéodory
integrand with parameter $(x,a)$ and decision variable $\lambda$. The measurable
maximum theorem (see Lemma \ref{Lemma : measurable_maximum_Theorem}) then implies
that $(x,a)
\mapsto
\argmax_{\lambda\in[0,\Lambda]}F_t^\theta(\lambda;x,a)$
is weakly measurable and admits a measurable selector. If the argmax is singleton-valued, then the unique maximizer map is Borel
measurable.
\end{proof}
\begin{Remark}
Measurability ensures that expressions involving selected optimizers are
well-defined. Uniqueness, however, is not required for the one-sided directional
formulas derived later. When the inner or dual optimizer is set-valued, the
derivative is computed over the corresponding active set. Thus set-valued optimizers are compatible with
directional differentiability, although they need not yield a single
vector-valued gradient.
\end{Remark}
\begin{Remark}
Whenever the optimizer is unique, the corresponding selector is continuous, and
uniformly continuous on compact parameter sets by Berge's maximum theorem
\cite[Theorem~17.31]{AliprantisBorder2006}. Continuity of the objective alone gives hemicontinuity of the optimizer
correspondence, but not necessarily a continuous selector. For example, for
$\ell(p,y)=py$ on $[-1,1]\times[-1,1]$,
$$
\argmin_{y\in[-1,1]}\ell(p,y)
=
\begin{cases}
\{-1\}, & p>0,\\
[-1,1], & p=0,\\
\{1\}, & p<0,
\end{cases}
$$
so no continuous selector exists. Without uniqueness, one should keep the
set-valued directional formula or impose an explicit selection rule.
\end{Remark}

\subsection{Policy-gradient representation}\label{sec: policy gradient directional}
We now establish the regularity properties of the robust value functions needed to differentiate the robust performance criterion with respect to the policy parameter. The next step is to quantify how the robust value functions vary with the policy parameter.
Since the dependence on $\theta$ enters both through the policy $\pi^\theta$ and through the
dual robust Bellman operator, we work with directional derivatives in $\theta$.
We recall the notion of directional differentiability that will be used to state our regularity results. 

\begin{Definition}\label{def:one_sided_dir_derivative}
Let $O\subset\R^{d}$ be open, let $f:O\to\R$, and let $ \theta\in\operatorname{int}(O)$. For a direction $r\in\R^{d}$, the right and left directional derivatives of $f$ at $\theta$ in direction $r$ are defined, whenever the limits exist, by
$$D_\theta^+f(\theta)[r]
:=
\lim_{h\downarrow0}\frac{f(\theta+hr)-f(\theta)}{h},
\quad
D_\theta^-f(\theta)[r]
:=
\lim_{h\uparrow0}\frac{f(\theta+hr)-f(\theta)}{h}.$$
When the side is clear from the context, $D_\theta$ denotes the corresponding
one-sided directional derivative. Statements involving $D_\theta$ are to be read separately for the right derivative $D_\theta^+$ and for the left derivative $D_\theta^-$.
\end{Definition}
\subsubsection{Directional sensitivity on compact state-action spaces}
In what follows, all directional derivatives with respect to $\theta$ are taken at points
$\theta\in\operatorname{int}(\Theta)$. If $\theta\in\partial\Theta$, the same
statements are understood only for feasible directions $r$, i.e., directions for
which $\theta+hr\in\Theta$ for all sufficiently small one-sided $h$. The next result uses a value-level stability condition. 
This condition should not be interpreted as a primitive smoothness assumption on the model data. 

In Wasserstein robust dynamic programming, the active dual multiplier and the active transport minimizer may be set-valued, and active sets may switch under perturbations of the policy parameter. 
Thus, without uniqueness or an explicit stability condition (see Section \ref{app:unique}), one cannot expect a classical gradient or a continuous optimizer selector. 
The equicontinuity assumption below is precisely the condition used to pass from pointwise active-set sensitivities to continuous uniform value sensitivities. The value-level condition is automatic in finite state spaces. The dual-level condition, in contrast, involves the continuum of dual variables $[0,\Lambda]$ and is a genuine stability assumption even in finite state spaces: it fails whenever tied transport targets carry heterogeneous value sensitivities at some kink of the piecewise-affine dual objective. Section \ref{subsec:primal} develops a primal envelope route that dispenses with the dual-level condition altogether.

Primitive sufficient conditions can be given in special cases, such as strictly convex inner problems with unique stable selectors, but such conditions are substantially stronger than the assumptions needed for the directional derivative formula and would exclude important nonsmooth examples, including finite-state and q=1 Wasserstein models. We therefore state the main differentiability result under these stability conditions, while treating primitive sufficient conditions as model-specific refinements. Appendix \ref{app:unique} collects primitive convexity and smoothness conditions under which the relevant selectors are unique and stable.

\begin{Proposition}\label{eq:uniform-dir-diffuniform-bound-dq}
Suppose Assumptions \ref{assumption : reward functions} and
\ref{assump:regularity_pi_theta} hold. Fix
$\theta\in\Theta$, $t\in\bar{\mathcal T}$, and a feasible direction
$r\in\mathbb R^{d_\theta}$. Suppose also that there exists $h_0>0$ such that, for every $s\in\bar\Tc$, the
family of one-sided robust value quotients
$$
\bigg\{
x\mapsto
\frac{V_s^{\theta+hr}(x)-V_s^\theta(x)}{h}
:\;0<|h|\le h_0,\ \theta+hr\in\Theta
\Bigg\}
$$
is equicontinuous on $\mathcal X$, and that, for every $s\in\Tc$, the
family of one-sided robust dual quotients
$$
\bigg\{
(x,a,\lambda)\mapsto
\frac{
F_s^{\theta+hr}(\lambda;x,a)-F_s^\theta(\lambda;x,a)
}{h}
:\;0<|h|\le h_0,\ \theta+hr\in\Theta
\bigg\}
$$
is equicontinuous on $\mathcal X\times A\times[0,\Lambda]$. Then the right and left directional
derivatives of $V_t^\theta$ in the direction $r$ exist. Moreover, the
corresponding one-sided derivative satisfies
$D_\theta V_t^\theta(\cdot)[r]\in C(\mathcal X;\mathbb R)$ and
\begin{equation*}
\left\|
\frac{V_t^{\theta+hr}-V_t^\theta}{h}
-
D_\theta V_t^\theta(\cdot)[r]
\right\|_\infty
\longrightarrow 0,
\end{equation*}
as $h\downarrow0$ for the right derivative and as $h\uparrow0$ for the left
derivative. In addition, for each side there exists $h_1>0$ such that
\begin{equation*}
\sup_{0<|h|\le h_1}
\left\|
\frac{V_t^{\theta+hr}-V_t^\theta}{h}
\right\|_\infty
<+\infty,
\end{equation*}
where $h$ is restricted to the corresponding one-sided neighborhood of $0$.
\end{Proposition}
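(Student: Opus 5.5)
We argue by backward induction on $s=T,T-1,\dots,t$, establishing for each $s$ three facts at once: the one-sided directional derivative $D_\theta V_s^\theta(\cdot)[r]$ exists pointwise, the difference quotients converge to it uniformly on $\mathcal X$, and these quotients are uniformly bounded for small $|h|$. The base case $s=T$ is trivial, since $V_T^\theta=g$ does not depend on $\theta$, so the derivative is $0$. For the inductive step, we write the Bellman recursion of Remark \ref{G_dpp} as $V_s^\theta(x)=\int_A G_s^\theta(x,a)\,\pi_s^\theta(x,a)\,\nu(\d a)$ with $G_s^\theta(x,a):=\sup_{\lambda\in[0,\Lambda]}F_s^\theta(\lambda;x,a)$. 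Lemma \ref{lem:compact_dual_maximizer_set} lets us restrict to a uniform $\Lambda$, because the value functions are uniformly bounded in $\theta$. We then split the quotient in the standard way:
\begin{equation*}
\frac{V_s^{\theta+hr}-V_s^\theta}{h}(x)=\int_A \frac{G_s^{\theta+hr}-G_s^\theta}{h}(x,a)\,\pi_s^{\theta+hr}(x,\d a)+\int_A G_s^\theta(x,a)\,\frac{\pi_s^{\theta+hr}-\pi_s^\theta}{h}(x,\d a).
\end{equation*}
The second term converges uniformly in $x$ to the score-function term $\int_A G_s^\theta\,\nabla_\theta\log\pi_s^\theta\cdot r\,\d\pi_s^\theta$. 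This follows from Assumption \ref{assump:regularity_pi_theta}(ii): the score is bounded, the derivative is continuous in $(\theta,x)$ on a compact set, and $G_s^\theta$ is continuous and bounded (by Berge's theorem, as in Proposition \ref{prop:measurable_dual_selector}).

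\textbf{Dual quotients.} For the first term we need uniform convergence of the dual quotients $(F_s^{\theta+hr}-F_s^\theta)/h$ on $\mathcal X\times A\times[0,\Lambda]$. Pointwise, $F_s^\theta(\lambda;x,a)$ is an integral against $\P_s^0(x,a,\cdot)$ of an inner infimum over $y$ of $f+V_{s+1}^\theta(y)+\lambda c(x',y)$. By the induction hypothesis, $V_{s+1}^{\theta+hr}=V_{s+1}^\theta+h\,D_\theta V_{s+1}^\theta[r]+o(h)$ uniformly. A Danskin argument on the compact set $\mathcal X$ (for the right derivative) then gives the one-sided derivative of the inner infimum as $\min_{y\in Y^*_{s,\theta,\lambda}(x,a;x')}D_\theta V_{s+1}^\theta(y)[r]$, with a $\max$ for the left derivative. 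The inner quotients are bounded by $\sup_{|h|\le h_1}\|(V_{s+1}^{\theta+hr}-V_{s+1}^\theta)/h\|_\infty$, so dominated convergence gives pointwise convergence of the $F$-quotients. The dual equicontinuity hypothesis, together with uniform boundedness and compactness of $\mathcal X\times A\times[0,\Lambda]$, upgrades this to uniform convergence by an Arzel\`a--Ascoli argument: pointwise convergence of an equicontinuous family on a compact set is uniform, and the limit is continuous.

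\textbf{Outer supremum.} Next comes Danskin's theorem for $\sup_{\lambda\in[0,\Lambda]}$. Since the $F$-quotients converge uniformly to a continuous limit $\partial F$, the standard envelope argument gives
\begin{equation*}
D^+_\theta G_s^\theta(x,a)[r]=\max_{\lambda\in\Lambda_s^{*,\theta}(x,a)}\partial F(\lambda;x,a),
\end{equation*}
with the analogue using $\min$ for the left derivative. The lower bound follows by fixing an optimal $\lambda$. The upper bound follows by taking maximizers $\lambda_h$ for $\theta+hr$, extracting a subsequence converging to some $\lambda\in\Lambda_s^{*,\theta}$ (by upper hemicontinuity), and using the uniform convergence. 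The same sandwich bounds $|G^{\theta+hr}_s-G^\theta_s|\le\sup_\lambda|F^{\theta+hr}_s-F^\theta_s|$ give uniform boundedness of the $G$-quotients, and hence, with the bounded score, of the $V_s$-quotients. This yields the claimed bound with some $h_1$.

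\textbf{Uniformity in $x$ and the main obstacle.} The envelope argument above gives only pointwise convergence of the $G$-quotients in $(x,a)$. Bounded convergence then passes the limit inside $\int_A\cdot\,\pi_s^{\theta+hr}(x,\d a)$, using the weak continuity of $\pi$ and the convergence of the densities. This establishes pointwise existence of $D_\theta V_s^\theta(x)[r]$. To get uniformity in $x$ and continuity of the limit, we invoke the value-level equicontinuity hypothesis for $s$: an equicontinuous, uniformly bounded family converging pointwise on compact $\mathcal X$ converges uniformly, and its limit lies in $C(\mathcal X;\mathbb R)$. This closes the induction. We expect the main difficulty to be the Danskin step over $\lambda$ together with the inner Danskin step over $y$. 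Here the hypothesis must be used carefully: joint uniform convergence over $\lambda$ is exactly what allows selecting near-optimal $\lambda_h$ and passing to the limit, and this is why the dual equicontinuity assumption, rather than only the value-level one, is needed.
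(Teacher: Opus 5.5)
Your proposal is correct and follows essentially the same route as the paper. Both arguments proceed by backward induction, use the inner envelope formula at fixed $\lambda$ (with the $o(h)$ remainder passed through the $1$-Lipschitz map $u\mapsto-\mathcal F^\lambda(-u)$), use dual-quotient equicontinuity to get uniform convergence in $\lambda$ and then apply directional Danskin over $[0,\Lambda]$, and finally use value-quotient equicontinuity to upgrade pointwise convergence in $x$ to uniform convergence with a continuous limit.

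The only difference is cosmetic. You integrate the $G$-quotient against the moving measure $\pi_s^{\theta+hr}(x,\cdot)$, whereas the paper freezes $\pi_s^\theta$ there and puts $G_s^{\theta+hr}$ in the score term. Your split therefore needs the extra observation that the bounded score gives $\|\pi_s^{\theta+hr}(x,\cdot)-\pi_s^\theta(x,\cdot)\|_{\mathrm{TV}}=O(|h|)$. Weak continuity alone would not suffice, because the integrand only converges pointwise in $a$.
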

\begin{proof}
    See Appendix \ref{app: proof_diff}.
\end{proof}
Having established continuity and directional differentiability of $V^\theta_0$,
we now differentiate the robust performance map $J(\theta)$. Since $\mu_0$ does
not depend on $\theta$, an immediate application of
Proposition \ref{eq:uniform-dir-diffuniform-bound-dq} and the dominated
convergence Theorem gives, for any $r\in\R^d$,
\begin{align}\label{eq:pg}
    D_\theta J(\theta)[r]
    = \E_{\mu_0}\left[D_\theta V_0^\theta(x)[r]\right].
\end{align}
The content of the directional policy gradient therefore reduces entirely to characterizing
$D_\theta V_0^\theta(x)[r]$ for each $x\in\Xc$ and $r\in\R^d$. This is carried out through
the backward recursion in Corollary \ref{corr: dpp_grad}, whose key ingredient
is differentiating
\begin{equation}\label{eq:G_def2}
G_t^\theta(x,a)
:=\sup_{\lambda\ge 0}\Big(
\E_{X\sim \P_t^0(x,a,\cdot)}\big[-\Fc^{\lambda}(-f(t,x,a,\cdot)-V_{t+1}^\theta(\cdot))(X)\big]
-\varepsilon^q\lambda
\Big),
\end{equation} with respect to $\theta$, for $(t,x,a)\in\Tc\times\Xc\times A$. By Remark \ref{G_dpp}, the quantity $G_t^\theta(x,a)$ can be interpreted as a robust state--action value function. Indeed,
$$
V_t^\theta(x)
=
\E_{a\sim\pi_t^\theta(x,\cdot)}
\big[G_t^\theta(x,a)\big],
$$
so $G_t^\theta(x,a)$ represents the robust value associated with selecting action $a$ in state $x$ at time $t$ under the Wasserstein ambiguity set and the policy $\pi^\theta$. In this sense, $G_t^\theta$ plays the role of a robust $Q$-function. The policy-gradient recursion may therefore be viewed as differentiating a robust $Q$-function with respect to the policy parameters. 

The value-quotient condition in Proposition \ref{eq:uniform-dir-diffuniform-bound-dq} ensures that the
fixed-$\lambda$ directional derivatives of $F_t^\theta(\lambda;x,a)$ exist, while the dual-quotient condition ensures that they converge
uniformly in the dual variable $\lambda$, which is the condition needed to
differentiate the outer supremum over $\lambda$. Section \ref{subsec:primal} gives an alternative primal route to the derivative of $G_t^\theta$ that dispenses with the dual-quotient condition.\\

Since $G_t^\theta$ is defined as a supremum over the dual variable $\lambda$, a Danskin-type argument can be used to differentiate it with respect to $\theta$. This is the content of the following theorem. \begin{Theorem}\label{gradient J and F} Suppose Assumption \ref{assump:regularity_pi_theta} holds. Fix $\theta\in\Theta$, $t\in\Tc$, $(x,a)\in\Xc\times A$, and a feasible direction $r\in\R^d$. Assume that the hypotheses of Proposition \ref{eq:uniform-dir-diffuniform-bound-dq} hold. Then the map $\theta\mapsto G_t^\theta(x,a)$ admits one-sided directional derivatives, and \begin{align} D_\theta^+G_t^\theta(x,a)[r] &= \sup_{\lambda\in\Lambda_t^{*,\theta}(x,a)} \E_{X\sim \P_t^0(x,a,\cdot)} \bigg[ \inf_{y\in Y_{t,\theta,\lambda}^*(x,a;X)} D^+_\theta V_{t+1}^\theta(y)[r] \bigg], \label{eq:grad_G_plus_thm}\\ D_\theta^-G_t^\theta(x,a)[r]
&=
\inf_{\lambda\in\Lambda_t^{*,\theta}(x,a)}
\E_{X\sim \P_t^0(x,a,\cdot)}
\bigg[
\sup_{y\in Y_{t,\theta,\lambda}^*(x,a;X)}
D_\theta^- V_{t+1}^\theta(y)[r]
\bigg], \label{eq:grad_G_minus_thm} \end{align} where \begin{equation}\label{def_selector_lambda} \begin{split} \Lambda_t^{*,\theta}(x,a) &:= \argmax_{\lambda\ge 0} \left\{ \E_{X\sim \P_t^0(x,a,\cdot)} \left[ -\Fc^\lambda \left( -f(t,x,a,\cdot)-V_{t+1}^\theta(\cdot) \right)(X) \right] -\varepsilon^q\lambda \right\}, \\ Y^*_{t,\theta,\lambda}(x,a;x') &:= \argmin_{y\in\Xc} \left\{ f(t,x,a,y)+V_{t+1}^\theta(y)+\lambda c(x',y) \right\}. \end{split} \end{equation} \end{Theorem}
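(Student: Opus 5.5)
The plan is a two-level Danskin argument. The inner level is the minimization over $y$ at fixed $(\lambda,x')$. The outer level is the maximization over $\lambda\in[0,\Lambda]$. Write $G_t^\theta(x,a)=\max_{\lambda\in[0,\Lambda]}F_t^\theta(\lambda;x,a)$; this is legitimate by Lemma \ref{lem:compact_dual_maximizer_set}, since $V_{t+1}^{\theta'}$ is uniformly bounded for $\theta'$ near $\theta$ by the bound in Proposition \ref{eq:uniform-dir-diffuniform-bound-dq}. Also set
\[
\phi^\theta(\lambda,x')=\min_{y\in\Xc}\{f(t,x,a,y)+V_{t+1}^\theta(y)+\lambda c(x',y)\}.
\]
Only $V_{t+1}^\theta$ depends on $\theta$; $f$, $c$ and $\P_t^0$ do not.

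\textbf{Step 1 (inner Danskin).} Fix $(\lambda,x')$. For the right derivative, take $h\downarrow0$ and $y^*\in Y^*_{t,\theta,\lambda}(x,a;x')$. Then
\[
\phi^{\theta+hr}-\phi^\theta\le V_{t+1}^{\theta+hr}(y^*)-V_{t+1}^\theta(y^*),
\]
so $\limsup_{h\downarrow0}h^{-1}(\phi^{\theta+hr}-\phi^\theta)\le\inf_{y^*}D^+_\theta V_{t+1}^\theta(y^*)[r]$. For the matching lower bound, pick $y_h\in Y^*_{t,\theta+hr,\lambda}$. Then
\[
\phi^{\theta+hr}-\phi^\theta\ge V_{t+1}^{\theta+hr}(y_h)-V_{t+1}^\theta(y_h).
\]
By compactness a subsequence gives $y_h\to\bar y$. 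Since $V_{t+1}^{\theta+hr}\to V_{t+1}^\theta$ uniformly, $\bar y$ is a minimizer. Uniform convergence of the quotients together with continuity of $D^+_\theta V_{t+1}^\theta[r]$ (both from Proposition \ref{eq:uniform-dir-diffuniform-bound-dq}) then give $\liminf\ge D^+V(\bar y)\ge\inf_{Y^*}D^+V$.

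For $h\uparrow0$ the same two inequalities hold, but dividing by $h<0$ reverses them, which produces the $\sup_{y}$ form. The inner quotients are bounded by $\sup_h\|h^{-1}(V^{\theta+hr}_{t+1}-V^\theta_{t+1})\|_\infty$, so dominated convergence under $\P_t^0(x,a,\cdot)$ gives, for each fixed $\lambda$,
\[
\partial^\pm_\theta F_t^\theta(\lambda;x,a)[r]=\E_{X\sim\P_t^0}\Big[\inf/\sup_{y\in Y^*_{t,\theta,\lambda}(x,a;X)}D^\pm_\theta V_{t+1}^\theta(y)[r]\Big].
\]

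\textbf{Step 2 (outer Danskin).} For $h\downarrow0$ and $\lambda^*\in\Lambda_t^{*,\theta}$,
\[
G^{\theta+hr}-G^\theta\ge F^{\theta+hr}(\lambda^*)-F^\theta(\lambda^*),
\]
which gives $\liminf\ge\sup_{\lambda^*}\partial^+F(\lambda^*)$. For the upper bound, take $\lambda_h\in\Lambda^{*,\theta+hr}_t$, so that
\[
G^{\theta+hr}-G^\theta\le F^{\theta+hr}(\lambda_h)-F^\theta(\lambda_h).
\]
Extract a subsequence $\lambda_h\to\bar\lambda$. By joint continuity (Proposition \ref{prop:measurable_dual_selector}) and $F^{\theta+hr}\to F^\theta$ uniformly, $\bar\lambda\in\Lambda_t^{*,\theta}$. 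Reversing the inequalities for $h\uparrow0$ yields the $\inf_\lambda$ formula.

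\textbf{Main obstacle.} The difficulty is passing to the limit in $h^{-1}(F^{\theta+hr}-F^\theta)(\lambda_h)$ when $\lambda_h$ moves with $h$. Here I would use the dual-quotient equicontinuity hypothesis. The family is equicontinuous on the compact set $[0,\Lambda]$ and bounded, and by Step 1 it converges pointwise to $\partial^\pm F(\cdot)[r]$. Arzelà--Ascoli then upgrades this to uniform convergence, and the limit is continuous. Hence
\[
h^{-1}(F^{\theta+hr}-F^\theta)(\lambda_h)\to\partial^\pm F(\bar\lambda)[r]\le\sup_{\Lambda^*}\partial^\pm F.
\]
Applying this along every subsequence shows that the limsup and liminf coincide, which establishes \eqref{eq:grad_G_plus_thm} and \eqref{eq:grad_G_minus_thm}.
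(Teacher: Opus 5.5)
Your proposal is correct and follows essentially the same route as the paper. Both arguments use an inner Danskin step that compares minimizers in $y$, dominated convergence under $\mathbb P_t^0(x,a,\cdot)$ at fixed $\lambda$, and the dual-quotient equicontinuity upgraded to uniform convergence on $[0,\Lambda]$ to differentiate the outer supremum over $\lambda$. The differences are in packaging only. The paper first differentiates the linearized inner map $h\mapsto\inf_y\{f(t,x,a,y)+V_{t+1}^\theta(y)+hH(y)+\lambda c(x',y)\}$ with $H=D_\theta V_{t+1}^\theta(\cdot)[r]$, then transfers to $V_{t+1}^{\theta+hr}$ via the $1$-Lipschitz property of $u\mapsto-\mathcal F^\lambda(-u)$. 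It also cites the compact directional Danskin theorem (Theorem \ref{def : danskin_Theorem}) where you carry out the perturbed-maximizer argument with $\lambda_h$ by hand. Your version has the minor advantage of treating the left derivative explicitly rather than ``in the same way''.
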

\begin{proof}[Proof of Theorem \ref{gradient J and F}]
We fix $\theta \in \Theta$, $\lambda \geq 0$ and $(x,a,x') \in \mathcal{X} \times A \times \mathcal{X}$. For a perturbation
$H\in C(\mathcal X;\mathbb R)$, define the scalar map
$m_{x,a,x',\lambda}:\mathbb R\to\mathbb R$ by
\begin{align}
    m_{x,a,x',\lambda}(h)
    &:=
    - \mathcal{F}^{\lambda}
    \big(
        - f(t,x,a,\cdot)
        - (V_{t+1}^{\theta}+hH)(\cdot)
    \big)(x') \notag =
    \inf_{y \in \mathcal{X}}
    \Big\{
        f(t,x,a,y)
        + V_{t+1}^{\theta}(y)
        + hH(y)
        + \lambda c(x',y)
    \Big\},
\end{align}
for $h$ in a neighborhood of $0$. Let $y^{\star} \in Y_{t,\theta,\lambda}^{\star}(x,a;x')$
be any minimizer for $h=0$, where
$$Y_{t,\theta,\lambda}^{\star}(x,a;x')
:=
\argmin_{y\in\mathcal X}
\Big\{
f(t,x,a,y)
+
V_{t+1}^{\theta}(y)
+
\lambda c(x',y)
\Big\}.$$
Such a minimizer exists by compactness of $\mathcal X$ and continuity of the
inner objective. Then, for all $h > 0$,
\begin{align}
    m_{x,a,x',\lambda}(h)
    &\leq
    f(t,x,a,y^\star)
    + V_{t+1}^{\theta}(y^\star)
    + hH(y^\star)
    + \lambda c(x',y^\star) \notag =
    m_{x,a,x',\lambda}(0)
    + hH(y^\star).
\end{align}
Hence, we have $\frac{
m_{x,a,x',\lambda}(h)-m_{x,a,x',\lambda}(0)
}{h}
\leq H(y^\star)$. Taking the infimum over $Y_{t,\theta,\lambda}^{\star}(x,a,x')$ yields
\begin{equation}\label{eq : lim_sup_ineq}
    \limsup_{h\downarrow 0}
\frac{
m_{x,a,x',\lambda}(h)-m_{x,a,x',\lambda}(0)
}{h}
\leq
\inf_{y\in Y_{t,\theta,\lambda}^{\star}(x,a;x')}
H(y).  
\end{equation}
Conversely, for each $h > 0$, choose a minimizer $y_h \in \argmin_{y\in\Xc} \lbrace  f(t,x,a,y) + V_{t+1}^{\theta}(y) + h H(y)  + \lambda c(x',y) \big \rbrace$ so that 
$m_{x,a,x',\lambda}(h) = f(t,x,a,y_h) + V_{t+1}^{\theta}(y_h) + h H(y_h) + \lambda c(x',y_h).$ Since $$m_{x,a,x',\lambda}(0) \leq  f(t,x,a,y_h)+ V_{t+1}^{\theta}(y_h) + \lambda c(x',y_h),$$ we obtain $\frac{m_{x,a,x',\lambda}(h)-m_{x,a,x',\lambda}(0)}{h} \geq H(y_h).$
Let $h_n\downarrow0$. By compactness of $\mathcal X$, along a subsequence,
$y_{h_n}\to\bar y\in\mathcal X$. By continuity of
$f(t,x,a,\cdot)$, $V_{t+1}^\theta$, and $c(x',\cdot)$, we have
$\bar y\in Y_{t,\theta,\lambda}^\star(x,a;x')$. Since $H$ is continuous, $\lim_{n\to\infty}H(y_{h_n})=H(\bar y).$ Consequently, 
\begin{align}\label{eq: lim_inf}
\liminf_{n\to+\infty}\frac{m_{x,a,x',\lambda}(h_n)-m_{x,a,x',\lambda}(0)}{h_n}
\ge \lim_{n\to+\infty}H(y_{h_n})
= H(\bar{y})
\ge \inf_{y\in Y^*_{t,\theta,\lambda}(x,a;x')}H(y). \notag 
\end{align}
Since $(h_n)_{n\ge 0}$ was arbitrary,
\begin{equation}\label{eq:liminf-bound-G}
\liminf_{h\to 0}\frac{m_{x,a,x',\lambda}(h)-m_{x,a,x',\lambda}(0)}{h}
\ge \inf_{y\in Y^*_{t,\theta,\lambda}(x,a;x')}H(y).  
\end{equation}
Combining \eqref{eq : lim_sup_ineq} and \eqref{eq:liminf-bound-G},
the directional derivative of $m_{x,a,x',\lambda}$ exists at $0$ and satisfies $m_{+}^{\prime}(0) = \inf_{y\in Y^*_{t,\theta,\lambda}(x,a;x')}H(y).$
We now compute the left derivative. Again fix $y^*\in Y^*$. For $h<0$, $m_{x,a,x',\lambda}(h)\le m_{x,a,x',\lambda}(0)+hH(y^*).$ Dividing by $h<0$ reverses the inequality $\frac{m_{x,a,x',\lambda}(h)-m_{x,a,x',\lambda}(0)}{h}\ge H(y^*).$ Taking the supremum over $y^*\in Y^*$ yields
$$\liminf_{h\uparrow0}\frac{m_{x,a,x',\lambda}(h)-m_{x,a,x',\lambda}(0)}{h}
\ge
\sup_{y\in Y^*_{t,\theta,\lambda}(x,a;x')}H(y).$$
Conversely, if $y_h$ is a minimizer for $m_{x,a,x',\lambda}(h)$ with $h<0$, the same comparison with $m(0)$ gives $m_{x,a,x',\lambda}(h)-m_{x,a,x',\lambda}(0)\ge hH(y_h).$
Dividing by the negative number $h$ gives $\frac{m_{x,a,x',\lambda}(h)-m_{x,a,x',\lambda}(0)}{h}\le H(y_h).$ Along any sequence $h_n\uparrow0$, a subsequence of $y_{h_n}$ converges to some $\bar y\in Y^*$, and therefore
$$\limsup_{h\uparrow0}\frac{m_{x,a,x',\lambda}(h)-m_{x,a,x',\lambda}(0)}{h}
\le
H(\bar y)
\le
\sup_{y\in Y^*_{t,\theta,\lambda}(x,a;x')}H(y).$$
Thus
$m'_-(0)=\sup_{y\in Y^*_{t,\theta,\lambda}(x,a;x')}H(y).$
We now apply these identities to the policy perturbation. We set $H(\cdot):=D_{\theta}V_{t+1}^\theta(\cdot)[r].
$ Set
$$
U_h
:=
-f(t,x,a,\cdot)-V^{\theta+hr}_{t+1}(\cdot),
\quad
\widetilde U_h
:=
-f(t,x,a,\cdot)-(V^\theta_{t+1}+hH)(\cdot),
~~\text{and}~~
U_0
:=
-f(t,x,a,\cdot)-V^\theta_{t+1}(\cdot).
$$
By Proposition \ref{eq:uniform-dir-diffuniform-bound-dq},
$H\in C(\mathcal X;\mathbb R)$ and $V^{\theta+hr}_{t+1}
=
V^\theta_{t+1}
+
hH
+
o(h)$ in $\|\cdot\|_\infty$ as $h\downarrow0.$ Since $f$ does not depend on $\theta$, then
$$
-f(t,x,a,\cdot)-V^{\theta+hr}_{t+1}(\cdot)
=
-f(t,x,a,\cdot)-V^\theta_{t+1}(\cdot)-hH(\cdot)+o(h)
$$
in $\|\cdot\|_\infty$. Hence, using the Lipschitz property of $-\mathcal F^\lambda$, we obtain
$$
\begin{aligned}
&\bigg\|
\frac{-\mathcal F^\lambda(U_h)+\mathcal F^\lambda(U_0)}{h}
-
\frac{-\mathcal F^\lambda(\widetilde U_h)+\mathcal F^\lambda(U_0)}{h}
\bigg\|_\infty\le
\frac{\|U_h-\widetilde U_h\|_\infty}{|h|}
=
\frac{
\|V^{\theta+hr}_{t+1}-(V^\theta_{t+1}+hH)\|_\infty
}{|h|}
\xrightarrow[h\downarrow0]{}0.
\end{aligned}
$$
Therefore, for each $x'\in\mathcal X$,
\begin{equation*}
\begin{split}
&\frac{
-\mathcal F^\lambda(-f(t,x,a,\cdot)-V^{\theta+hr}_{t+1}(\cdot))(x')
+
\mathcal F^\lambda(-f(t,x,a,\cdot)-V^\theta_{t+1}(\cdot))(x')
}{h}\xrightarrow[h\downarrow0]{}
\inf_{y\in Y^*_{t,\theta,\lambda}(x,a;x')}
D_{\theta} V_{t+1}^\theta(y)[r].
\end{split}
\end{equation*}

\noindent By Lemma \ref{lem:compact_dual_maximizer_set}, the supremum over $\lambda\ge0$ may be
restricted to a compact interval $[0,\Lambda]$. Hence,
$$
G_t^\theta(x,a)
=
\sup_{\lambda\ge0}
F_t^\theta(\lambda;x,a)
=
\sup_{\lambda\in[0,\Lambda]}
F_t^\theta(\lambda;x,a).
$$
Moreover, by the uniform quotient bound in
Proposition \ref{eq:uniform-dir-diffuniform-bound-dq} and the $1$-Lipschitz
property of $u\mapsto-\mathcal F^\lambda(-u)$, the above difference quotients are
uniformly bounded for $h$ sufficiently small. Thus dominated convergence gives,
for fixed $\lambda$,
\begin{equation}
\label{eq:directional-psi}
D^+_\theta F^\theta_t(\lambda;x,a)[r]
=
\mathbb E_{X\sim\mathbb P_t^0(x,a,\cdot)}
\bigg[
\inf_{y\in Y_{t,\theta,\lambda}^{\star}(x,a;X)}
D_\theta V_{t+1}^{\theta}(y)[r]
\bigg].
\end{equation}
By the equicontinuity assumption, the quotients $\lambda\mapsto
\frac{
F_t^{\theta+hr}(\lambda;x,a)-F_t^\theta(\lambda;x,a)
}{h}$
are equicontinuous on the compact interval $[0,\Lambda]$. The fixed-$\lambda$
calculation above gives pointwise convergence to the right-hand side of
\eqref{eq:directional-psi}. The pointwise limit is continuous in $\lambda$: equicontinuity is inherited by pointwise limits (fix $\eta>0$, take $\delta>0$ from the equicontinuity of the quotient family uniformly in $h$, and let $h\downarrow0$ in $|q_h(\lambda)-q_h(\lambda')|\le\eta$ for $|\lambda-\lambda'|<\delta$). An equicontinuous family converging pointwise on a compact set converges uniformly; hence the convergence is uniform on $[0,\Lambda]$. Therefore the compact
directional Danskin theorem (see Theorem \ref{def : danskin_Theorem}) applies to $\theta\mapsto
G_t^\theta(x,a)
=
\sup_{\lambda\in[0,\Lambda]}F_t^\theta(\lambda;x,a).$
It yields
$$
D_\theta^+G_t^\theta(x,a)[r]
=
\sup_{\lambda\in\Lambda_t^{*,\theta}(x,a)}
\E_{X\sim \P_t^0(x,a,\cdot)}
\bigg[
\inf_{y\in Y_{t,\theta,\lambda}^*(x,a;X)}
D_\theta^+ V_{t+1}^\theta(y)[r]
\bigg],
$$ 
Using \eqref{eq:directional-psi}, we obtain the stated right-derivative formula.
The left derivative is obtained in the same way, using $h\uparrow0$ and the corresponding left active-set formula. This proves the theorem.
\end{proof}
\begin{Remark}\label{rem:dual-scope}
The dual-quotient equicontinuity in Proposition \ref{eq:uniform-dir-diffuniform-bound-dq} fails whenever, at some $\lambda_0\in(0,\Lambda)$, the transport set $Y^*_{t,\theta,\lambda_0}(x,a;x')$ is multivalued with heterogeneous directional value-derivatives on a set of nominal points $x'$ of positive $\P^0_t(x,a,\cdot)$-mass. In atomic models, $\lambda\mapsto F_t^\theta(\lambda;x,a)$ is piecewise affine and the quotients $q_h(\lambda)=(F_t^{\theta+hr}-F_t^\theta)(\lambda;x,a)/h$ then sweep the full derivative gap over $\lambda$-windows of width $O(h)$ around such a kink, so no modulus of continuity uniform in $h$ exists. Theorem \ref{gradient J and F} should therefore be read as covering the consistent-selector regime, in which all active optimizers carry the same directional value. The general nonsmooth case is handled by Proposition \ref{prop:primal-envelope} below.
\end{Remark}

Next, we formulate the dynamic programming principle for the directional derivative of the value function, which will serve as the foundation for the derivation of our actor--critic algorithm.

\begin{corollary}\label{corr: dpp_grad}
Under the conditions of Theorem \ref{gradient J and F}, or under those of Proposition \ref{prop:primal-envelope} applied at every $t\in\Tc$ (so that the one-sided derivatives $D_\theta G_t^\theta(\cdot,\cdot)[r]$ exist and are uniformly bounded on $\Xc\times A$), for all
$x\in\Xc$,
\begin{align}\label{eq : Dynamic programming gradient for robust value functions}
\begin{cases}
D_\theta V_T^\theta(x)[r]=0,\\[0.2cm]
D_\theta V_t^\theta(x)[r]
=
\mathbb E_{a\sim\pi_t^\theta(x,\cdot)}
\left[
G_t^\theta(x,a)
\left\langle
\nabla_\theta\log\pi_t^\theta(x,a),r
\right\rangle
+
D_\theta G_t^\theta(x,a)[r]
\right],
\qquad t=T-1,\ldots,0.
\end{cases}
\end{align}
\end{corollary}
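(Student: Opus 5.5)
The plan is to differentiate the identity $V_t^\theta(x)=\int_A G_t^\theta(x,a)\,\pi_t^\theta(x,a)\,\nu(\d a)$ from Remark \ref{G_dpp} along the direction $r$. The terminal case is immediate: $V_T^\theta=g$ does not depend on $\theta$, so $D_\theta V_T^\theta[r]=0$. For $t\in\Tc$, I fix $x$ and a side ($h\downarrow0$ or $h\uparrow0$) and split the difference quotient as
\begin{align*}
\frac{V_t^{\theta+hr}(x)-V_t^\theta(x)}{h}
=\int_A G_t^{\theta}(x,a)\,\frac{\pi_t^{\theta+hr}(x,a)-\pi_t^\theta(x,a)}{h}\,\nu(\d a)
+\int_A \frac{G_t^{\theta+hr}(x,a)-G_t^\theta(x,a)}{h}\,\pi_t^{\theta+hr}(x,a)\,\nu(\d a).
\end{align*}

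\emph{First term.} By Proposition \ref{eq:uniform-cont} and Berge's theorem, as in the proof of Proposition \ref{prop:measurable_dual_selector}, the map $a\mapsto G_t^\theta(x,a)$ is continuous and bounded on $A$. Assumption \ref{assump:regularity_pi_theta}(ii) states that $\theta\mapsto\int_A\varphi\,\d\pi_t^\theta$ is $C^1$ for continuous $\varphi$. Applying this with $\varphi=G_t^\theta(x,\cdot)$, the first term converges to $\int_A G_t^\theta(x,a)\,\langle\nabla_\theta\pi_t^\theta(x,a),r\rangle\,\nu(\d a)$. Rewriting $\nabla_\theta\pi=\pi\nabla_\theta\log\pi$ gives $\E_{a\sim\pi_t^\theta}[G_t^\theta\langle\nabla_\theta\log\pi_t^\theta,r\rangle]$, which is legitimate because the score is bounded by $C_\pi$.

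\emph{Second term.} The quotient $q_h(a):=(G_t^{\theta+hr}-G_t^\theta)(x,a)/h$ converges pointwise in $a$ to $D_\theta G_t^\theta(x,a)[r]$. Under the hypotheses of Theorem \ref{gradient J and F} this follows from that theorem; otherwise it follows from Proposition \ref{prop:primal-envelope}. The quotient is also uniformly bounded. Indeed, $u\mapsto-\mathcal F^\lambda(-u)$ is $1$-Lipschitz in sup norm, so $|G_t^{\theta+hr}-G_t^\theta|\le\|V_{t+1}^{\theta+hr}-V_{t+1}^\theta\|_\infty$. The uniform quotient bound of Proposition \ref{eq:uniform-dir-diffuniform-bound-dq} then bounds $q_h$, and in the primal-envelope case the assumed uniform boundedness does the same. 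It remains to handle the moving measure $\pi_t^{\theta+hr}$. I write
\[
\int q_h\,\d\pi^{\theta+hr}=\int q_h\,\d\pi^\theta+\int q_h\,(\d\pi^{\theta+hr}-\d\pi^\theta).
\]
Dominated convergence makes the first piece tend to $\E_{\pi_t^\theta}[D_\theta G_t^\theta(x,a)[r]]$. The second piece is at most $\sup|q_h|\cdot\int_A|\pi_t^{\theta+hr}-\pi_t^\theta|\,\d\nu$.

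\textbf{Main obstacle.} I expect the hardest step to be showing that this total-variation distance tends to $0$ when $q_h$ is only bounded and measurable, not continuous. My approach uses the bounded score together with the mean value theorem for $\theta\mapsto\log\pi_t^\theta(x,a)$. This gives
\[
|\pi_t^{\theta+hr}(x,a)-\pi_t^\theta(x,a)|\le \pi_t^\theta(x,a)\bigl(e^{C_\pi|h|\|r\|}-1\bigr),
\]
so the total variation is $O(h)$. This also implies measurability of $a\mapsto D_\theta G_t^\theta(x,a)[r]$ as a pointwise limit of measurable quotients. Combining the two limits yields the recursion \eqref{eq : Dynamic programming gradient for robust value functions} for each side separately, and the derivative of $J$ then follows from \eqref{eq:pg}.
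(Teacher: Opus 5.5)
Your proposal is correct and follows essentially the paper's route: you differentiate $V_t^\theta(x)=\mathbb{E}_{a\sim\pi_t^\theta(x,\cdot)}[G_t^\theta(x,a)]$ under the policy integral using the score-function identity, the bounded score, and dominated convergence, and your bound $\int_A|\pi_t^{\theta+hr}-\pi_t^\theta|\,\mathrm{d}\nu\le e^{C_\pi|h|\|r\|}-1$ makes explicit the treatment of the moving measure that the paper's short proof leaves implicit. One small precision: in the primal-envelope case, take the uniform bound on $q_h$ from hypothesis \eqref{eq:primal-hyp} at time $t+1$ (uniform convergence to a continuous, hence bounded, limit on the compact $\mathcal{X}$) combined with your Lipschitz estimate, not from the assumed boundedness of the limits $D_\theta G_t^\theta(\cdot,\cdot)[r]$.
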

\begin{proof}
    Recall from Remark \ref{G_dpp} that the DPP defined in \eqref{eq : Dynamic programming for robust value functions} can be written as
$$
V_T^\theta(x)=g(x),~ \text{and}~~
V_t^\theta(x)=\E_{a\sim\pi^\theta_t(x,\cdot)}\Big[G_t^\theta(x,a)\Big],
\quad t=T-1,\dots,0.
$$
Fix $t<T$, $x\in\Xc$ and a direction $r\in\R^d$. By the uniform boundedness of $G_t^\theta$ and
$D_\theta G_t^\theta(\cdot,\cdot)[r]$, the bounded-score assumption in
Assumption \ref{assump:regularity_pi_theta}, and dominated convergence, we may
differentiate under the policy integral. The score-function identity gives
$$
\begin{aligned}
D_\theta V_t^\theta(x)[r]
&=
D_\theta\left[
\int_A G_t^\theta(x,a)\,\pi_t^\theta(x,\d a)
\right][r] \\
&=
\int_A
\Big(
D_\theta G_t^\theta(x,a)[r]\,\pi_t^\theta(x,\d a)
+
G_t^\theta(x,a)\,D_\theta\pi_t^\theta(x,\d a)[r]
\Big)\\
&=
\int_A
\Big(
D_\theta G_t^\theta(x,a)[r]
+
G_t^\theta(x,a)
\left\langle
\nabla_\theta\log\pi_t^\theta(x,a),r
\right\rangle
\Big)
\pi_t^\theta(x,\d a)\\
&=
\E_{a\sim\pi_t^\theta(x,\cdot)}
\Big[
G_t^\theta(x,a)
\left\langle
\nabla_\theta\log\pi_t^\theta(x,a),r
\right\rangle
+
D_\theta G_t^\theta(x,a)[r]
\Big].
\end{aligned}
$$
Also $D_\theta V_T^\theta(x)[r]=0$, since $V_T^\theta(x)=g(x)$ does not depend on $\theta$. Hence, for all $(t,x)\in\{0,\dots,T\}\times\Xc$ and all directions $r\in\R^d$,
$$
D_\theta V_T^\theta(x)[r]=0,~~\text{and}~~D_\theta V_t^\theta(x)[r]
=
\E_{a\sim\pi_t^\theta(x,\cdot)}
\Big[
G_t^\theta(x,a)
\left\langle
\nabla_\theta\log\pi_t^\theta(x,a),r
\right\rangle
+
D_\theta G_t^\theta(x,a)[r]
\Big].
$$
This concludes the proof.
\end{proof}

\subsubsection{A primal envelope formula over worst-case transition laws}\label{subsec:primal}
The dual route of the previous subsection differentiates the scalar envelope over $\lambda$ and requires the dual-quotient stability condition. However, by \eqref{eq : duality result Wasserstein distance}, $G_t^\theta(x,a)$ is itself the infimum of a functional that is linear in the measure and smooth in $\theta$, taken over the $\theta$-independent, weakly compact Wasserstein ball. Danskin's argument can therefore be applied directly in the space of measures. The resulting formula expresses the one-sided derivatives through the set of worst-case transition laws, and remains valid when this set is not a singleton.

\begin{Proposition}\label{prop:primal-envelope}
Fix $t\in\Tc$, $(x,a)\in\Xc\times A$, $\theta\in\operatorname{int}(\Theta)$, and a feasible direction $r\in\R^{d_\theta}$. Assume the following value quotients converge uniformly,
\begin{equation}\label{eq:primal-hyp}
\sup_{y\in\Xc}
\bigg|
\frac{V_{t+1}^{\theta+hr}(y)-V_{t+1}^{\theta}(y)}{h}
-
D^+_\theta V_{t+1}^{\theta}(y)[r]
\bigg|
\xrightarrow[h\downarrow0]{}0,
\end{equation}
with $y\mapsto D^+_\theta V_{t+1}^\theta(y)[r]\in C(\Xc;\R)$ (by Proposition \ref{eq:uniform-dir-diffuniform-bound-dq} at time $t+1$). Write
$$
\Phi_t(\P,\theta)
:=
\int_{\Xc}\big(f(t,x,a,y)+V_{t+1}^\theta(y)\big)\,\P(\d y),
\quad
\Pc^{*}_t(x,a,\theta)
:=
\argmin_{\P\in\mathbb{B}_t^{\varepsilon,q}(\P_t^0(x,a,\cdot))}
\Phi_t(\P,\theta).
$$
 Then the right directional derivative of $\theta\mapsto G_t^\theta(x,a)$ exists and
\begin{equation}\label{eq:primal-right}
D^+_\theta G_t^\theta(x,a)[r]
=
\min_{\P^{*}\in\Pc^{*}_t(x,a,\theta)}
\int_{\Xc}
D^+_\theta V_{t+1}^{\theta}(y)[r]
\,\P^{*}(\d y).
\end{equation}
Under the corresponding left-sided hypothesis ($h\uparrow0$, limit $D^-_\theta V_{t+1}^\theta(\cdot)[r]\in C(\Xc;\R)$),
\begin{equation}\label{eq:primal-left}
D^-_\theta G_t^\theta(x,a)[r]
=
\max_{\P^{*}\in\Pc^{*}_t(x,a,\theta)}
\int_{\Xc}
D^-_\theta V_{t+1}^{\theta}(y)[r]
\,\P^{*}(\d y).
\end{equation}
\end{Proposition}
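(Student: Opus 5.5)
The plan is a direct Danskin argument in the space of measures. By Proposition \ref{prop:duality} and the attainment remark following it, $G_t^\theta(x,a)=\min_{\P\in\mathbb B}\Phi_t(\P,\theta)$, where $\mathbb B:=\mathbb{B}_t^{\varepsilon,q}(\P_t^0(x,a,\cdot))$. This ball is weakly compact and does not depend on $\theta$. For fixed $\theta$, the map $\P\mapsto\Phi_t(\P,\theta)$ is weakly continuous, because $f(t,x,a,\cdot)+V_{t+1}^\theta$ is continuous and bounded on the compact $\Xc$ (Proposition \ref{eq:uniform-cont}). Hence $\Pc^*_t(x,a,\theta)$ is nonempty and weakly compact. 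Since $H:=D^+_\theta V_{t+1}^\theta(\cdot)[r]$ is continuous, $\P\mapsto\int H\,\d\P$ is weakly continuous, so the minimum in \eqref{eq:primal-right} is attained.

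For the upper bound, fix any $\P^*\in\Pc^*_t(x,a,\theta)$ and $h>0$. Then
$G_t^{\theta+hr}(x,a)-G_t^\theta(x,a)\le\Phi_t(\P^*,\theta+hr)-\Phi_t(\P^*,\theta)=\int (V^{\theta+hr}_{t+1}-V^\theta_{t+1})\,\d\P^*$.
The hypothesis \eqref{eq:primal-hyp} says that $(V^{\theta+hr}_{t+1}-V^\theta_{t+1})/h=H+o(1)$ uniformly on $\Xc$. Dividing by $h$ and taking the limsup therefore gives $\limsup_{h\downarrow0}\le\int H\,\d\P^*$. Taking the infimum over $\P^*$ then yields $\limsup\le\min_{\Pc^*_t}\int H\,\d\P$.

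For the lower bound, take $h_n\downarrow0$ and choose $\P_n\in\Pc^*_t(x,a,\theta+h_nr)$, which exists by the same attainment argument. Then
$G_t^{\theta+h_nr}-G_t^\theta\ge\Phi_t(\P_n,\theta+h_nr)-\Phi_t(\P_n,\theta)=\int (V^{\theta+h_nr}_{t+1}-V^\theta_{t+1})\,\d\P_n$,
so the quotient is at least $\int H\,\d\P_n-o(1)$. Here the $o(1)$ is uniform in $\P_n$ because the convergence in \eqref{eq:primal-hyp} is uniform in $y$.

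The main step is to show that every weak limit point of $(\P_n)$ lies in $\Pc^*_t(x,a,\theta)$. By compactness of $\mathbb B$, a subsequence satisfies $\P_n\rightharpoonup\bar\P\in\mathbb B$. The uniform convergence assumption gives $\|V^{\theta+h_nr}_{t+1}-V^\theta_{t+1}\|_\infty=O(h_n)\to0$, so $\Phi_t(\cdot,\theta+h_nr)\to\Phi_t(\cdot,\theta)$ uniformly on $\mathbb B$. Combined with weak continuity of $\Phi_t(\cdot,\theta)$, this gives
$\Phi_t(\bar\P,\theta)=\lim\Phi_t(\P_n,\theta+h_nr)=\lim G_t^{\theta+h_nr}=G_t^\theta$,
where the last equality holds because a minimum of uniformly convergent functions converges. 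Hence $\bar\P$ is a minimizer at $\theta$. Continuity of $H$ gives $\int H\,\d\P_n\to\int H\,\d\bar\P\ge\min_{\Pc^*_t}\int H\,\d\P$. Since every sequence has a subsequence achieving this bound, $\liminf\ge\min$. Combined with the upper bound, this proves \eqref{eq:primal-right}.

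The left derivative \eqref{eq:primal-left} follows by the same argument with $h<0$. Dividing by a negative $h$ reverses both inequalities, so the upper comparison via a fixed $\P^*$ becomes a lower bound, which produces the supremum over $\Pc^*_t$. The limit-point argument gives the matching upper bound, and the maximum is attained by weak compactness of $\Pc^*_t$.
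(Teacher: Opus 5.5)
Your proposal is correct and takes essentially the same approach as the paper: a Danskin-type envelope argument over the $\theta$-independent, weakly compact Wasserstein ball. The upper bound comes from a fixed worst-case law $\P^*\in\Pc^*_t(x,a,\theta)$, and the lower bound comes from minimizers at $\theta+h_nr$, whose weak cluster points are shown to lie in $\Pc^*_t(x,a,\theta)$ using the uniform-in-$y$ expansion \eqref{eq:primal-hyp}; the left derivative then follows by reversing the inequalities for $h<0$, exactly as in the paper.
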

\begin{proof}
Throughout, write $H:=D^+_\theta V_{t+1}^\theta(\cdot)[r]\in C(\Xc;\R)$.

Since $\Xc$ is compact, weak convergence
$\P_n\rightharpoonup\P$ implies $\int\varphi\,\d\P_n\to\int\varphi\,\d\P$ for
every $\varphi\in C(\Xc)$. As $f(t,x,a,\cdot)+V_{t+1}^\theta\in C(\Xc)$ by
Assumption \ref{assumption : reward functions} and Proposition
\ref{eq:uniform-cont}, the map $\P\mapsto\Phi(\P,\theta)$ is weakly continuous
and linear. The ball $\mathbb{B}_t^{\varepsilon,q}(\P_t^0(x,a,\cdot))$ is weakly compact according to \citep[Theorem~1]{YueKuhnWiesemann2022},
whose hypotheses hold here since $\Xc$ is compact. Hence $\Pc^{*}_t(x,a,\theta)$ is nonempty, and it is weakly closed as a level
set of a continuous function, hence weakly compact.

 Set
$\rho(h):=\sup_{y\in\Xc}\big|(V_{t+1}^{\theta+hr}(y)-V_{t+1}^{\theta}(y))/h-H(y)\big|$,
so that $\rho(h)\to0$ as $h\downarrow0$ by \eqref{eq:primal-hyp}. For every $\P\in\mathbb{B}_t^{\varepsilon,q}(\P_t^0(x,a,\cdot))$ and $h>0$,
\begin{equation}\label{eq:unif-expansion}
\Big|\Phi(\P,\theta+hr)-\Phi(\P,\theta)-h\int_{\Xc}H\,\d\P\Big|
\le
h\,\rho(h),
\end{equation}
uniformly in $\P$. In particular,
$\sup_{\P\in\mathbb{B}_t^{\varepsilon,q}(\P_t^0(x,a,\cdot))}|\Phi(\P,\theta+hr)-\Phi(\P,\theta)|\le h(\|H\|_\infty+\rho(h))\to0$,
whence $G_t^{\theta+hr}(x,a)\to G_t^{\theta}(x,a)$ as $h\downarrow0$.

 Fix $\P^{*}\in\Pc^{*}_t(x,a,\theta)$. Since $G_t^{\theta+hr}(x,a)\le\Phi(\P^{*},\theta+hr)$ and $G_t^{\theta}(x,a)=\Phi(\P^{*},\theta)$, \eqref{eq:unif-expansion} gives, for $h>0$,
$$
\frac{G_t^{\theta+hr}(x,a)-G_t^{\theta}(x,a)}{h}
\le
\int_{\Xc}H\,\d\P^{*}+\rho(h).
$$
Hence $\limsup_{h\downarrow0}$ of the quotient is at most $\int H\,\d\P^{*}$ for every $\P^{*}\in\Pc^{*}_t(x,a,\theta)$, and therefore at most $\min_{\P^{*}\in\Pc^{*}}\int H\,\d\P^{*}$ (the minimum is attained by weak compactness of $\Pc^{*}_t(x,a,\theta)$ and weak continuity of $\P\mapsto\int H\,\d\P$). For each small $h>0$, choose $\P_h\in\argmin_{\P\in\mathbb{B}_t^{\varepsilon,q}(\P_t^0(x,a,\cdot))}\Phi(\P,\theta+hr)$, which is nonempty. Since $\Phi(\P_h,\theta)\ge G_t^\theta(x,a)$, \eqref{eq:unif-expansion} gives
$$
\frac{G_t^{\theta+hr}(x,a)-G_t^{\theta}(x,a)}{h}
=
\frac{\Phi(\P_h,\theta+hr)-G_t^{\theta}(x,a)}{h}
\ge
\int_{\Xc}H\,\d\P_h-\rho(h).
$$
Let $(h_n)_{n\in\N}$ realize the $\liminf$ of the quotient, such that $h_n\downarrow0$. By weak compactness of $\mathbb{B}_t^{\varepsilon,q}(\P_t^0(x,a,\cdot))$, along a subsequence $\P_{h_n}\rightharpoonup\bar\P\in\mathbb{B}_t^{\varepsilon,q}(\P_t^0(x,a,\cdot))$. Hence, using \ref{eq:unif-expansion}, we have $\Phi(\P_{h_n},\theta)
=\Phi(\P_{h_n},\theta+h_nr)+O(h_n)
=G_t^{\theta+h_nr}(x,a)+O(h_n)
\to G_t^{\theta}(x,a)$. By weak continuity of $\Phi(\cdot,\theta)$, $\Phi(\bar\P,\theta)=\lim_n\Phi(\P_{h_n},\theta)=G_t^\theta(x,a)$, i.e., $\bar\P\in\Pc^{*}_t(x,a,\theta)$. Since $H\in C(\Xc)$, $\int H\,\d\P_{h_n}\to\int H\,\d\bar\P$. Therefore
$$
\liminf_{h\downarrow0}
\frac{G_t^{\theta+hr}(x,a)-G_t^{\theta}(x,a)}{h}
\ge
\int_{\Xc}H\,\d\bar\P
\ge
\min_{\P^{*}\in\Pc^{*}_t(x,a,\theta)}\int_{\Xc}H\,\d\P^{*}.
$$
Combining all the previous steps proves \eqref{eq:primal-right}.

For the left derivative, write $H^-:=D^-_\theta V_{t+1}^\theta(\cdot)[r]$ and let $\rho^-(h)\to0$ denote the corresponding uniform error for $h\uparrow0$. For $h<0$ and $\P^{*}\in\Pc^{*}$, $G_t^{\theta+hr}\le\Phi(\P^{*},\theta+hr)=\Phi(\P^{*},\theta)+h\int H^-\d\P^{*}+o(h)$; dividing by $h<0$ reverses the inequality, so $\liminf_{h\uparrow0}$ of the quotient is at least $\int H^-\d\P^{*}$ for every $\P^{*}$, hence at least the maximum. Conversely, with $\P_h$ a minimizer at $\theta+hr$, $G_t^{\theta+hr}-G_t^{\theta}\ge h\int H^-\d\P_h+o(h)$, and dividing by $h<0$ gives that the quotient is at most $\int H^-\d\P_h+o(1)$; the cluster-point argument of Step 3 then bounds the $\limsup_{h\uparrow0}$ by $\int H^-\d\bar\P\le\max_{\Pc^{*}}\int H^-\d\P^{*}$. This proves \eqref{eq:primal-left}.
\end{proof}

\begin{Remark}\label{rem:primal-dual}
\begin{enumerate}
    \item When the dual maximizer is unique with $\lambda_t^{*}(x,a,\theta)>0$ and the transport sets $Y^{*}_{t,\theta,\lambda^{*}}(x,a;x')$ are singletons, strong duality and complementary slackness identify the worst-case law as the pushforward of the nominal law under the transport selector,
$\Pc^{*}_t(x,a,\theta)=\{\,y_t^{*}(x,a,\cdot,\theta)\#\P_t^0(x,a,\cdot)\,\}$
(see \cite{BlanchetMurthy2019,GaoKleywegt2023}). Then \eqref{eq:primal-right}--\eqref{eq:primal-left} reduce to
$D_\theta G_t^\theta(x,a)[r]=\E_{X\sim\P_t^0(x,a,\cdot)}\big[D_\theta V_{t+1}^\theta(y_t^{*}(x,a,X,\theta))[r]\big]$,
which is exactly the formula of Lemma \ref{lem_vector_grad1} and coincides with the dual expressions of Theorem \ref{gradient J and F}.
\item For $q>1$, strict convexity of $c(x',\cdot)$ makes the transport sets singletons whenever $y\mapsto f(t,x,a,y)+V_{t+1}^\theta(y)$ is convex (Corollary \ref{cor:unique_inner_selector}). More generally, the structural results of \cite{GaoKleywegt2023} show that worst-case laws can split the mass of at most one atom of $\P_t^0(x,a,\cdot)$, so that for non-atomic nominal kernels the primal and dual formulas coincide. Note that this alone does not make the dual active-set expressions equal to the primal formulas.
\item The primal formula does not require differentiating the selector $\theta\mapsto\P_t^{*,\theta}$, in line with Remark \ref{untractability_P_comment}. The possible multivaluedness of the worst case is absorbed by the minimum and maximum over $\Pc^{*}_t(x,a,\theta)$. The dual remains the computational device. It is through $(\widehat\lambda_t^{*},y^{*})$ that $G_t^\theta$ and elements of $\Pc^{*}_t(x,a,\theta)$ are evaluated in Algorithm \ref{alg:robust_actor_critic}.
\end{enumerate} 
\end{Remark}

\subsubsection{Directional sensitivity in finite state-action spaces}
When $\mathcal X$ and $A$ are finite, the measurability requirements used above hold trivially, and the value-level quotient condition of Proposition \ref{eq:uniform-dir-diffuniform-bound-dq} is automatic. Every family of functions on a finite set is equicontinuous, and pointwise convergence on a finite set is uniform. For this reason, the finite-state result below is derived from the primal formula of Proposition \ref{prop:primal-envelope}, which requires no stability condition. The worst-case set is then a polytope computable by linear programming.
\begin{Theorem}
    \label{thm: finite_case}
Suppose that $\Xc$ and $A$ are finite, that $\varepsilon>0$, and that the policy class
$\{\pi^\theta:\theta\in\Theta\}$ satisfies Assumption \ref{assump:regularity_pi_theta}. Fix
$\theta\in\operatorname{int}(\Theta)$ and a feasible direction $r$. For $t\in\Tc$ and $(x,a)\in\Xc\times A$,
$$H_\theta(y):=f(t,x,a,y)+V_{t+1}^\theta(y)~~ \text{and} ~~
\Pc^{*}_t(x,a,\theta)
:=
\argmin_{\P\in\mathbb{B}_t^{\varepsilon,q}(\P_t^0(x,a,\cdot))}
\sum_{y\in\Xc}\P(\{y\})\,H_\theta(y).
$$
Then, for every $x\in \Xc$ and every $t\in\bar\Tc$, the one-sided directional derivatives
$D^+_\theta V_t^\theta(x)[r]$ and $D^-_\theta V_t^\theta(x)[r]$ exist, with $D^{\pm}_\theta V_T^\theta(x)[r]=0$ and, for $t=T-1,\ldots,0$,
$$D^{\pm}_\theta V_t^\theta(x)[r]
=
\sum_{a\in A}\pi_t^\theta(x,a)
\left[
G_t^\theta(x,a)
\langle \nabla_\theta\log\pi_t^\theta(x,a),r\rangle
+
D^{\pm}_\theta G_t^\theta(x,a)[r]
\right],$$
where
{\small $$D^+_\theta G_t^\theta(x,a)[r]
=
\min_{\P^{*}\in\Pc^{*}_t(x,a,\theta)}
\sum_{y\in\Xc}\P^{*}(\{y\})\,
D^+_\theta V_{t+1}^\theta(y)[r],
\quad
D^-_\theta G_t^\theta(x,a)[r]
=
\max_{\P^{*}\in\Pc^{*}_t(x,a,\theta)}
\sum_{y\in\Xc}\P^{*}(\{y\})\,
D^-_\theta V_{t+1}^\theta(y)[r].$$}
Moreover, $\Pc^{*}_t(x,a,\theta)$ is a polytope. It is the set of second marginals of the optimal face of the linear program
\begin{equation}\label{eq:finite-LP}
\min_{\gamma\ge0}
\;\sum_{x',y\in\Xc}\gamma(x',y)\,H_\theta(y)
\quad\text{subject to}\quad
\sum_{y\in\Xc}\gamma(x',y)=\P_t^0(x,a,x'),
\quad
\sum_{x',y\in\Xc}\gamma(x',y)\,c(x',y)\le\varepsilon^q,
\end{equation}
where $x'\in\Xc$, $\gamma$ ranges over nonnegative kernels on $\Xc\times\Xc$ and $c(x',y)=\|x'-y\|^q$.
\end{Theorem}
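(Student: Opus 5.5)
The proof is a backward induction on $t$ that alternates between Proposition \ref{prop:primal-envelope} and the score-function computation of Corollary \ref{corr: dpp_grad}. Finiteness of $\Xc$ and $A$ replaces every uniformity and continuity requirement. The induction hypothesis at time $t+1$ is:
\begin{itemize}
\item[(a)] for each side, $D^{\pm}_\theta V_{t+1}^\theta(y)[r]$ exists for every $y\in\Xc$;
\item[(b)] the one-sided quotients $(V_{t+1}^{\theta+hr}(y)-V_{t+1}^\theta(y))/h$ converge to it uniformly in $y$ and are bounded for small $|h|$.
\end{itemize}
At $t+1=T$ this holds trivially, since $V_T^\theta=g$ does not depend on $\theta$. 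Because $\Xc$ is finite, pointwise convergence is uniform and every function on $\Xc$ is continuous. So (b) follows from (a) with no extra work, and hypothesis \eqref{eq:primal-hyp} of Proposition \ref{prop:primal-envelope} holds at each $(x,a)$, together with its left-sided analogue.

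\textbf{Step 1: derivative of $G_t^\theta$.} Proposition \ref{prop:primal-envelope} gives the existence of $D^{\pm}_\theta G_t^\theta(x,a)[r]$ and the stated min/max formulas. Integrals against $\P^*$ become finite sums because every measure on $\Xc$ is atomic. The proposition's standing inputs are also easy here. Weak compactness of the ball holds because $\Pc(\Xc)$ is a simplex in $\R^{|\Xc|}$. The ball itself is the projection of a compact polyhedron of couplings, since $W_q(\P,\P^0)\le\varepsilon$ if and only if some coupling has cost at most $\varepsilon^q$. Continuity of $H_\theta$ is automatic.

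\textbf{Step 2: derivative of $V_t^\theta$.} With $A$ finite, write $V_t^\theta(x)=\sum_a\pi_t^\theta(x,a)G_t^\theta(x,a)$. This is a finite sum of products of a $C^1$ factor $\pi_t^\theta(x,a)$ (Assumption \ref{assump:regularity_pi_theta}) and a one-sided directionally differentiable factor $G_t^\theta(x,a)$. The one-sided product rule therefore gives the displayed recursion, with $\nabla_\theta\pi=\pi\nabla_\theta\log\pi$, and no dominated convergence is needed. The one subtlety is that the product rule also needs $G_t^{\theta+hr}\to G_t^\theta$. This continuity is provided inside the proof of Proposition \ref{prop:primal-envelope}. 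Finiteness of $\Xc$ then gives (a) and (b) at time $t$, which closes the induction.

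\textbf{Step 3: polytope description.} This is the main point needing care. For finite $\Xc$, the constraint $W_q(\P,\P_t^0(x,a,\cdot))\le\varepsilon$ is equivalent to the existence of $\gamma\ge0$ satisfying:
\begin{itemize}
\item first marginal $\P_t^0(x,a,\cdot)$;
\item second marginal $\P$;
\item transport cost $\sum\gamma c\le\varepsilon^q$.
\end{itemize}
The direction from $W_q\le\varepsilon$ to a coupling uses attainment of the finite transport LP. Since the objective $\sum_y\P(\{y\})H_\theta(y)$ depends on $\gamma$ only through its second marginal, minimizing over $\P$ in the ball equals the LP \eqref{eq:finite-LP}. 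Moreover, $\P$ is optimal exactly when it is the second marginal of some optimal $\gamma$. Hence $\Pc^*_t(x,a,\theta)$ is the image of the optimal face under the linear second-marginal map. The LP is feasible (take $\gamma=\mathrm{diag}(\P^0)$, which has cost $0\le\varepsilon^q$) and bounded, since its feasible set is compact. Its optimal face is therefore a nonempty bounded polyhedron, i.e.\ a polytope, and a linear image of a polytope is a polytope.

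As a consequence, the min and max in the formulas for $D^{\pm}_\theta G_t^\theta$ are linear programs attained at vertices. The obstacle is mainly bookkeeping: checking that the equivalence between the ball and the coupling set is exact, including attainment, so that no closure issues arise.
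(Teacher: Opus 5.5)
Your proposal is correct and follows essentially the same route as the paper: a backward induction in which finiteness of $\Xc$ makes hypothesis \eqref{eq:primal-hyp} (and its left analogue) automatic, so Proposition \ref{prop:primal-envelope} gives $D^{\pm}_\theta G_t^\theta$; the one-sided product rule on $V_t^\theta(x)=\sum_a\pi_t^\theta(x,a)G_t^\theta(x,a)$ closes the induction; and $\Pc^{*}_t(x,a,\theta)$ is identified as the second-marginal image of the optimal face of \eqref{eq:finite-LP}. The differences are cosmetic: you treat the optimal face directly as a bounded polyhedron, whereas the paper spells out $F^{*}=\mathrm{conv}(W^{*})$ via Minkowski--Weyl before pushing it forward, and the continuity $G_t^{\theta+hr}\to G_t^\theta$ that you flag already follows from the existence of the one-sided derivative of $G_t^\theta$.
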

\begin{proof}
The proof is by backward induction on $t$, the case $t=T$ being immediate as $V_T^\theta=g$ does not depend on $\theta$, so $D^{\pm}_\theta V_T^\theta\equiv0$. Assume $D^{\pm}_\theta V_{t+1}^\theta(y)[r]$ exists for every $y\in\Xc$. Because $\Xc$ is finite, the corresponding expansions are uniform in $y$ and the limit maps are continuous, so hypothesis \eqref{eq:primal-hyp} and its left analogue hold at time $t+1$. The ball $\mathbb{B}_t^{\varepsilon,q}(\P_t^0(x,a,\cdot))$ is a closed, hence compact, subset of the simplex $\Pc(\Xc)$, on which weak convergence amounts to convergence of the probability of every state. Proposition \ref{prop:primal-envelope} then yields the displayed formulas for $D^{\pm}_\theta G_t^\theta(x,a)[r]$. Moreover, $\theta\mapsto\pi_t^\theta(x,a)$ is $C^1$ (Assumption \ref{assump:regularity_pi_theta}(ii) applied to $\varphi=\mathbbm 1_{\{a\}}\in C(A)$, $A$ being finite) and $G_t^{\theta+hr}(x,a)\to G_t^\theta(x,a)$ as $h\to0$ (Step 1 of the proof of Proposition \ref{prop:primal-envelope}), so the product rule for one-sided derivatives applies to each summand of $V_t^\theta(x)=\sum_a\pi_t^\theta(x,a)G_t^\theta(x,a)$, with $\nabla_\theta\pi_t^\theta=\pi_t^\theta\nabla_\theta\log\pi_t^\theta$, this gives the displayed recursion for $D^{\pm}_\theta V_t^\theta(x)[r]$ and closes the induction.

 By the definition of $\Wc_q$, with $c(x',y)=\|x'-y\|^q$ as in Proposition \ref{prop:duality},
$\Wc_q(\P,\P_t^0(x,a,\cdot))^q=\min_{\gamma\in\Gamma}\sum_{x',y}\gamma(x',y)c(x',y)$,
the minimum running over the couplings $\gamma\ge0$ with first marginal $\P_t^0(x,a,\cdot)$ and second marginal $\P$; this set is nonempty (it contains the product coupling) and compact, so the minimum is attained. Hence $\P$ lies in the ball if and only if some such $\gamma$ has cost at most $\varepsilon^q$. Let $F$ denote the feasible set of \eqref{eq:finite-LP} and let $L\gamma:=\big(\sum_{x'}\gamma(x',y)\big)_{y}$ be the linear second-marginal map. $L\gamma$ is a probability vector, its total mass being $\sum_{x'}\P_t^0(x,a,x')=1$. Note that $L(F)=\mathbb{B}_t^{\varepsilon,q}(\P_t^0(x,a,\cdot))$, and the objective of \eqref{eq:finite-LP} satisfies $\sum_{x',y}\gamma(x',y)H_\theta(y)=\Phi_t(L\gamma,\theta)$. Consequently the two problems have the same optimal value, $\gamma$ is optimal for \eqref{eq:finite-LP} if and only if $L\gamma\in\Pc^{*}_t(x,a,\theta)$, and $\Pc^{*}_t(x,a,\theta)=L(F^{*})$, where $F^{*}$ is the optimal set of \eqref{eq:finite-LP}. The feasible set of \eqref{eq:finite-LP} is
\begin{align*}
F&:=\Big\{\gamma\in\R_{\ge0}^{\Xc\times\Xc}:\ \ \sum_{y\in\Xc}\gamma(x',y)=\P_t^0(x,a,x')\ \ \forall x'\in\Xc,
\ \ \sum_{x',y\in\Xc}\gamma(x',y)\,c(x',y)\le\varepsilon^q\Big\}\ \\&=\underbrace{\bigcap_{(x',y)\in\Xc\times\Xc}\big\{\gamma:\ \gamma(x',y)\ge0\big\}}_{\R_{\ge0}^{\Xc\times\Xc}}
\ \cap\
\underbrace{\bigcap_{x'\in\Xc}\big\{\gamma:\ \ell_{x'}(\gamma)=\P_t^0(x,a,x')\big\}}_{\text{level sets}}
\ \cap\
\underbrace{\big\{\gamma:\ \ell_c(\gamma)\le\varepsilon^q\big\}}_{\text{sublevel set}}
.
\end{align*}
As the intersection of $\R_{\ge0}^{\Xc\times\Xc}$ with the level and sublevel sets of finitely many
linear, hence continuous, functionals on $\R^{\Xc\times\Xc}$, $F$ is a closed polyhedron. Since $0\le\gamma(x',y)\le\P_t^0(x,a,x')\le1$, $F\subseteq[0,1]^{\Xc\times\Xc}$ is
bounded. Being closed and bounded, $F$ is compact. The
diagonal plan $\gamma^0(x',y):=\P_t^0(x,a,x')\mathbbm 1_{\{y=x'\}}$ lies in
$\R_{\ge0}^{\Xc\times\Xc}$, and has cost
$\sum_{x'}\P_t^0(x,a,x')\,c(x',x')=0\le\varepsilon^q$, since $c(x',x')=0$. Hence, $F$ is non-empty. By the Minkowski--Weyl
theorem \citep[Theorem 19.1]{Rockafellar1970}, the bounded polyhedron $F$ is a polytope and
$F=\mathrm{conv}\{w_1,\dots,w_N\}$ for its finite set of extreme points.

The objective $\ell(\gamma):=\sum_{x',y}\gamma(x',y)H_\theta(y)$ is linear, hence continuous, and
therefore attains its minimum $v^{*}$ on the nonempty compact $F$. Let
$W^{*}:=\{w_i:\ell(w_i)=v^{*}\}$ and $F^{*}:=\{\gamma\in F:\ell(\gamma)=v^{*}\}$. Then
$F^{*}=\mathrm{conv}(W^{*})$. The inclusion $\supseteq$ holds because $F^{*}$ is convex and contains
$W^{*}$. Conversely, any $\gamma\in F^{*}$ writes $\gamma=\sum_i\alpha_iw_i$ with $\alpha_i\ge0$,
$\sum_i\alpha_i=1$, hence $v^{*}=\sum_i\alpha_i\,\ell(w_i)$ with $\ell(w_i)\ge v^{*}$ for all $i$,
forcing $\ell(w_i)=v^{*}$ whenever $\alpha_i>0$, i.e.\ $\gamma\in\mathrm{conv}(W^{*})$. In
particular $W^{*}\ne\emptyset$ (the minimum is attained at an extreme point) and $F^{*}$ is a
nonempty polytope. It is a face of $F$, namely its intersection with the hyperplane
$\{\ell=v^{*}\}$, which supports $F$ since $F\subseteq\{\ell\ge v^{*}\}$. For a linear map $L$ and a finite set $S$, $L(\mathrm{conv}(S))=\mathrm{conv}(L(S))$. Hence, 
$$
\Pc^{*}_t(x,a,\theta)=L(F^{*})=L(\mathrm{conv}(W^{*}))=\mathrm{conv}(\{Lw:\ w\in W^{*}\}),
$$
the convex hull of finitely many probability vectors, i.e, a polytope.
\end{proof}

\begin{Remark}
 Finiteness does not remove possible nonsmoothness when the inner minimizer or
the dual maximizer is not unique. In that case, the natural object is the
one-sided directional derivative of Theorem \ref{gradient J and F}. A classical
gradient formula is recovered only if the active inner minimizer
$Y_{t,\theta,\lambda}^*(x,a;x')$ and the active dual maximizer
$\Lambda_t^{*,\theta}(x,a)$ are singletons, or if all active minimizers and
maximizers give the same directional value.
\end{Remark}
\subsubsection{A vector-valued gradient representation}\label{sec: vector valued}
The directional derivative obtained in the previous section involves the dual
optimizer set and the inner minimizer set. In general, these sets may contain several
elements, so the resulting expression need not be linear in the direction $r$.
If the active dual and transport selectors are singletons, the active-set max--min expressions collapse to evaluation at the unique selectors. The resulting recursion is linear in $r$, so the Riesz representation in finite-dimensional parameter space yields the vector-valued gradient and the max--min structure in the
directional Danskin formula disappears. This is made precise in the lemma below.
\begin{Lemma}\label{lem_vector_grad1} 
Suppose the hypotheses of Theorem \ref{gradient J and F} hold for every stage and direction. Assume additionally that, for all $t\in\mathcal T$, $x\in\mathcal X$, $a\in A$,
and $\theta\in\Theta$, the dual optimizer is unique, $\Lambda_t^{\star,\theta}(x,a)=\{\lambda_t^\star(x,a,\theta)\},$
and that, for every $x'\in\mathcal X$, the inner optimizer is unique,
$Y_{t,\theta,\lambda_t^\star}^{\star}(x,a;x')
=
\{y_t^\star(x,a,x',\theta)\}.$
Then $D_\theta V_t^\theta(x)[r]$ is linear in $r$, i.e.,
$
D_\theta V_t^\theta(x)[r]
=
\langle
\nabla_\theta V_t^\theta(x),
r
\rangle.
$
Moreover, we have
$$
\nabla_\theta G_t^\theta(x,a)
=
\mathbb E_{X\sim P_t^0(x,a,\cdot)}
\left[
\nabla_\theta V_{t+1}^\theta
\left(y_t^\star(x,a,X,\theta)\right)
\right].
$$
\end{Lemma}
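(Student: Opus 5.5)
The plan is backward induction on $t$, from $T$ down to $0$. The inductive claim is that both one-sided derivatives $D^\pm_\theta V_t^\theta(x)[r]$ coincide and are linear in $r$, so they equal $\langle \nabla_\theta V_t^\theta(x), r\rangle$ for a vector $\nabla_\theta V_t^\theta(x)\in\R^{d_\theta}$. The base case is immediate: $V_T^\theta=g$, so $D^\pm_\theta V_T^\theta\equiv 0$ and $\nabla_\theta V_T^\theta=0$.

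For the inductive step, assume the claim at time $t+1$ for every $y\in\Xc$. We then apply Theorem \ref{gradient J and F}, whose hypotheses are assumed for every stage and direction. Uniqueness means $\Lambda_t^{*,\theta}(x,a)=\{\lambda_t^\star\}$, so the $\sup$ and $\inf$ over $\lambda$ in \eqref{eq:grad_G_plus_thm}--\eqref{eq:grad_G_minus_thm} reduce to evaluation at $\lambda_t^\star$. Likewise, $Y^\star_{t,\theta,\lambda_t^\star}(x,a;x')=\{y_t^\star(x,a,x',\theta)\}$ for every $x'$, so the inner $\inf$ and $\sup$ over $y$ collapse to evaluation at $y_t^\star$. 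This yields
$$D^\pm_\theta G_t^\theta(x,a)[r]=\E_{X\sim\P_t^0(x,a,\cdot)}\big[D^\pm_\theta V_{t+1}^\theta(y_t^\star(x,a,X,\theta))[r]\big].$$
By the induction hypothesis the integrand equals $\langle\nabla_\theta V_{t+1}^\theta(y_t^\star),r\rangle$ for both signs. Hence $D^+_\theta G_t^\theta=D^-_\theta G_t^\theta$, and, pulling $r$ outside the expectation, this common value is linear in $r$. This gives the displayed formula for $\nabla_\theta G_t^\theta(x,a)$.

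Next we plug this into Corollary \ref{corr: dpp_grad}. The score term $G_t^\theta\langle\nabla_\theta\log\pi_t^\theta,r\rangle$ is already linear in $r$, and the second term is linear by the previous step. Therefore
$$D_\theta V_t^\theta(x)[r]=\Big\langle \E_{a\sim\pi_t^\theta(x,\cdot)}\big[G_t^\theta(x,a)\nabla_\theta\log\pi_t^\theta(x,a)+\nabla_\theta G_t^\theta(x,a)\big],r\Big\rangle,$$
and the Riesz representation on $\R^{d_\theta}$ identifies $\nabla_\theta V_t^\theta(x)$. This closes the induction.

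The main obstacle is justifying that the vector-valued expectations are well defined, namely measurability and integrability of $X\mapsto\nabla_\theta V_{t+1}^\theta(y_t^\star(x,a,X,\theta))$. I would handle it as follows. Proposition \ref{prop:measurable_inner_selector} makes the unique selector $y_t^\star$ Borel. Proposition \ref{eq:uniform-dir-diffuniform-bound-dq} gives continuity of $D_\theta V_{t+1}^\theta(\cdot)[r]$ for each $r$; applying it to the basis directions $e_1,\dots,e_{d_\theta}$ shows that each component of $\nabla_\theta V_{t+1}^\theta$ is continuous and bounded on the compact $\Xc$. The composition is then bounded and measurable. The action-level expectation is handled by the bounded score in Assumption \ref{assump:regularity_pi_theta}, exactly as in Corollary \ref{corr: dpp_grad}. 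Finally, linearity of the expectation lets us exchange $\E$ and $\langle\cdot,r\rangle$.
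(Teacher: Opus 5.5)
Your proposal is correct and follows the paper's own backward-induction argument. Uniqueness collapses the active-set formulas of Theorem \ref{gradient J and F}, the induction hypothesis makes $D^+_\theta G_t^\theta=D^-_\theta G_t^\theta$ linear in $r$, and the policy average is then differentiated by the score-function identity. Your extra paragraph on measurability and boundedness of $X\mapsto\nabla_\theta V_{t+1}^\theta(y_t^\star(x,a,X,\theta))$ fills in a detail the paper only asserts. The uniqueness clause of Proposition \ref{prop:measurable_inner_selector} is stated for all $\lambda$, but the same argument works at the fixed $\lambda_t^\star$.
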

\begin{proof}
    See Appendix \ref{app: gradient valued}.
\end{proof}
As a direct result, the DPP formulated in Corollary \ref{corr: dpp_grad} satisfies the following.
\begin{corollary}
Assume that, for every $t\in\mathcal T$, $x\in\mathcal X$, $a\in A$,
$\theta\in\Theta$, and $X'\in\mathcal X$, the optimizer sets are singletons. Under the Assumption \ref{assump:regularity_pi_theta}, for all $(t,x) \in \Tc\times\Xc$,
\begin{align}
\begin{cases}
\nabla_\theta V_T^\theta(x)=0,\\[0.2cm]
\nabla_\theta V_t^\theta(x)
=
\mathbb E_{a\sim\pi_t^\theta(x,\cdot)}
\Big[
G_t^\theta(x,a)
\nabla_\theta\log\pi_t^\theta(x,a)
+
\nabla_\theta G_t^\theta(x,a)
\Big].
\end{cases}
\end{align}
\end{corollary}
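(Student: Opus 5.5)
The corollary follows by combining Corollary \ref{corr: dpp_grad} with Lemma \ref{lem_vector_grad1}. The hypotheses of the corollary are the singleton conditions on $\Lambda_t^{*,\theta}(x,a)$ and $Y^*_{t,\theta,\lambda}(x,a;x')$. We read them together with the standing stability hypotheses of Theorem \ref{gradient J and F}, which are needed to invoke that lemma. I will argue by backward induction on $t$.

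The claim at stage $t$ is that $D^+_\theta V_t^\theta(x)[r]=D^-_\theta V_t^\theta(x)[r]=\langle\nabla_\theta V_t^\theta(x),r\rangle$ for all $x$ and $r$, with $\nabla_\theta V_t^\theta$ given by the displayed formula.

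\textbf{Base case.} At $t=T$ we have $V_T^\theta=g$, which does not depend on $\theta$. Hence $\nabla_\theta V_T^\theta\equiv0$.

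\textbf{Inductive step.} Assume the claim at stage $t+1$.
\begin{enumerate}
\item Theorem \ref{gradient J and F} applies. Since $\Lambda_t^{*,\theta}(x,a)=\{\lambda_t^*\}$ and $Y^*_{t,\theta,\lambda_t^*}(x,a;X)=\{y_t^*\}$, the sup/inf in \eqref{eq:grad_G_plus_thm}--\eqref{eq:grad_G_minus_thm} collapse to evaluation at the unique selectors.
\item By the inductive hypothesis $D^\pm_\theta V_{t+1}^\theta(y)[r]=\langle\nabla_\theta V_{t+1}^\theta(y),r\rangle$. Therefore both one-sided derivatives of $G_t^\theta$ coincide and equal $\mathbb E_{X\sim\P_t^0(x,a,\cdot)}[\langle\nabla_\theta V_{t+1}^\theta(y_t^*(x,a,X,\theta)),r\rangle]$.
\item Measurability of $X\mapsto y_t^*(x,a,X,\theta)$ follows from Proposition \ref{prop:measurable_inner_selector}. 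Boundedness of $\nabla_\theta V_{t+1}^\theta$ follows from the uniform quotient bound in Proposition \ref{eq:uniform-dir-diffuniform-bound-dq}. Together these let us pull $r$ out of the expectation. This gives $D_\theta G_t^\theta(x,a)[r]=\langle\nabla_\theta G_t^\theta(x,a),r\rangle$, with $\nabla_\theta G_t^\theta$ as in Lemma \ref{lem_vector_grad1}.
\item Insert this into Corollary \ref{corr: dpp_grad}. Its score term $G_t^\theta(x,a)\langle\nabla_\theta\log\pi_t^\theta(x,a),r\rangle$ is already linear in $r$. So $D_\theta V_t^\theta(x)[r]=\langle\mathbb E_{a\sim\pi_t^\theta(x,\cdot)}[G_t^\theta\nabla_\theta\log\pi_t^\theta+\nabla_\theta G_t^\theta],r\rangle$.
\item Exchanging the expectation over $a$ and the inner product is justified because the score is bounded (Assumption \ref{assump:regularity_pi_theta}), $G_t^\theta$ is bounded (Lemma \ref{lem:compact_dual_maximizer_set}), and $\nabla_\theta G_t^\theta$ is bounded.
\item By the Riesz representation in $\R^{d}$, the bracket is the gradient $\nabla_\theta V_t^\theta(x)$. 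Equality of the left and right derivatives in every direction gives genuine Gateaux differentiability, which closes the induction.
\end{enumerate}

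\textbf{Main obstacle.} The delicate point is justifying that the hypotheses of Theorem \ref{gradient J and F}, namely the equicontinuity of the value and dual quotients, are implicitly part of the corollary's assumptions. Singletons alone do not yield them. I would state this explicitly, noting that the corollary is read "under the conditions of Lemma \ref{lem_vector_grad1}".

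A secondary point is measurability and boundedness of $x\mapsto\nabla_\theta V_t^\theta(x)$, needed to integrate at the next stage. Componentwise, $\nabla_\theta V_t^\theta(x)$ equals $D_\theta V_t^\theta(x)[e_i]$. By Proposition \ref{eq:uniform-dir-diffuniform-bound-dq} this is continuous and is a uniform limit of bounded quotients, so both properties follow.
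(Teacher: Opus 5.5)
Your proposal is correct and follows essentially the paper's route. The paper presents this corollary as a direct consequence of Lemma \ref{lem_vector_grad1}, whose proof is the same backward induction: singleton optimizer sets collapse the one-sided formulas of Theorem \ref{gradient J and F}, the induction hypothesis makes the result linear in $r$, and the score-function step of Corollary \ref{corr: dpp_grad} gives the vector recursion. You are also right that the singleton assumption must be read together with the equicontinuity hypotheses of Theorem \ref{gradient J and F}, which the paper states explicitly in Lemma \ref{lem_vector_grad1} and leaves implicit in this corollary.
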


Next, we present the classical likelihood-ratio policy gradient, with the robust state--action value as weight, evaluated under the frozen current worst-case kernel.
\begin{corollary}\label{cor:unrolled}
Under the hypotheses of Lemma \ref{lem_vector_grad1}, let $\P^{*,\theta}$ denote the worst-case path measure with initial distribution $\mu_0$ introduced before \eqref{eq : J(theta)}. Then
$$
\nabla_\theta J(\theta)
=
\E^{\P^{*,\theta}}\Big[\sum_{t=0}^{T-1}G_t^\theta(X_t,a_t)\,\nabla_\theta\log\pi_t^\theta(X_t,a_t)\Big].
$$
\end{corollary}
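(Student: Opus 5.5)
The plan is to unroll the vector-valued backward recursion of the preceding corollary along the frozen worst-case path measure $\P^{*,\theta}$, using the transport identity $y_t^\star(x,a,\cdot,\theta)\#\P_t^0(x,a,\cdot)=\P_t^{*,\theta}(x,a,\cdot)$ to rewrite each $\nabla_\theta G_t^\theta$ as a one-step conditional expectation under the worst-case kernel.

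\textbf{Step 1: start from the initial gradient.} By \eqref{eq:pg} and the linearity given by Lemma \ref{lem_vector_grad1},
$$\nabla_\theta J(\theta)=\E_{X_0\sim\mu_0}\big[\nabla_\theta V_0^\theta(X_0)\big].$$
Here differentiation under $\mu_0$ is justified by the uniform quotient bound of Proposition \ref{eq:uniform-dir-diffuniform-bound-dq}.

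\textbf{Step 2: identify the worst-case kernel with the transport pushforward.} Under uniqueness of $\lambda_t^\star$ and $y_t^\star$, Remark \ref{rem:primal-dual}(1) identifies the worst-case law as $y_t^\star(x,a,\cdot,\theta)\#\P_t^0(x,a,\cdot)$. Since $\Pc_t^*(x,a,\theta)$ is then a singleton, the measurable selector of Theorem \ref{thm : DPP + P^*} must equal it, that is, $\P_t^{*,\theta}(x,a,\cdot)=y_t^\star(x,a,\cdot,\theta)\#\P_t^0(x,a,\cdot)$. Consequently
$$\nabla_\theta G_t^\theta(x,a)=\int_{\Xc}\nabla_\theta V_{t+1}^\theta(y)\,\P_t^{*,\theta}(x,a,\d y).$$
If one prefers to avoid the complementary-slackness remark, the same identity follows directly from Proposition \ref{prop:primal-envelope}: the minimum and maximum over the singleton $\Pc_t^*$ coincide with integration against $\P_t^{*,\theta}(x,a,\cdot)$.

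\textbf{Step 3: rewrite the recursion as a martingale-type decomposition.} Substituting Step 2 into the vector recursion gives
$$\nabla_\theta V_t^\theta(x)=\E\Big[G_t^\theta(x,a_t)\nabla_\theta\log\pi_t^\theta(x,a_t)+\nabla_\theta V_{t+1}^\theta(X_{t+1})\Big],$$
where $a_t\sim\pi_t^\theta(x,\cdot)$ and $X_{t+1}\sim\P_t^{*,\theta}(x,a_t,\cdot)$. This is exactly the conditional expectation under $\P^{*,\theta}$ given $X_t=x$, because $\P^{*,\theta}$ is the product of the kernels $\pi_t^\theta$ and $\P_t^{*,\theta}$.

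\textbf{Step 4: induct backward.} The induction hypothesis is
$$\nabla_\theta V_t^\theta(X_t)=\E^{\P^{*,\theta}}\Big[\sum_{s=t}^{T-1}G_s^\theta(X_s,a_s)\nabla_\theta\log\pi_s^\theta(X_s,a_s)\,\Big|\,X_t\Big].$$
It holds at $t=T$ (empty sum, and $\nabla_\theta V_T^\theta=0$). The inductive step uses the tower property together with the Markov structure of $\P^{*,\theta}$: conditioning on $X_{t+1}$ under the path measure coincides with starting the kernels afresh at time $t+1$. Taking $t=0$ and integrating against $\mu_0$ yields the claim.

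\textbf{Integrability and measurability.} All integrals are finite because $G$ is bounded, the score is bounded by $C_\pi$, and $\nabla_\theta V$ is uniformly bounded; the latter follows by induction from the recursion. Measurability of $y_t^\star$ and $\lambda_t^\star$ comes from Propositions \ref{prop:measurable_inner_selector} and \ref{prop:measurable_dual_selector}.

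\textbf{Main obstacle.} The hard step is Step 2: identifying the pushforward with the specific selector $\P_t^{*,\theta}$ used to define $\P^{*,\theta}$. The proof relies on uniqueness of the worst-case law, which follows from the singleton assumptions via strong duality. At atoms of $\P_t^0$ one must check that no mass splitting occurs, and uniqueness of $y^\star$ rules it out.
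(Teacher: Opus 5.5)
Your proposal is correct and follows essentially the same route as the paper: identify the selector $\P_t^{*,\theta}(x,a,\cdot)$ with the pushforward $y_t^\star(x,a,\cdot,\theta){\#}\P_t^0(x,a,\cdot)$, substitute this into the vector recursion, unroll backward from $\nabla_\theta V_T^\theta=0$ along the flow of $(\pi^\theta,\P^{*,\theta})$, and integrate against $\mu_0$. Your extra justification that the particular selector coincides with the pushforward goes slightly beyond the paper's one-line assertion and is consistent with it. The complementary-slackness argument you cite also works when $\lambda_t^\star=0$. On the primal-envelope route, differentiability of $G_t^\theta$ already forces the minimum and maximum over $\Pc^{*}_t(x,a,\theta)$ to agree, so you do not even need $\Pc^{*}_t$ to be a singleton.
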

\begin{proof}
Under the uniqueness assumptions of Lemma \ref{lem_vector_grad1}, the worst-case transition law at $(t,x,a)$ is the pushforward
$\P_t^{*,\theta}(x,a,\cdot)=y_t^\star(x,a,\cdot,\theta){\#}\P_t^0(x,a,\cdot)$. The vector recursion reads $\nabla_\theta V_t^\theta(x)=\E_{a\sim\pi_t^\theta(x,\cdot)}\big[G_t^\theta(x,a)\nabla_\theta\log\pi_t^\theta(x,a)+\E_{Y\sim\P_t^{*,\theta}(x,a,\cdot)}[\nabla_\theta V_{t+1}^\theta(Y)]\big]$. Iterating backward from $\nabla_\theta V_T^\theta=0$ expresses $\nabla_\theta V_0^\theta(x)$ as the sum over $t$ of the score terms averaged along the flow generated by $(\pi^\theta,\P^{*,\theta})$ started at $x$, i.e.\ the displayed expectation conditional on $X_0=x$. Integrating over $X_0\sim\mu_0$ and using $\nabla_\theta J(\theta)=\E_{\mu_0}[\nabla_\theta V_0^\theta(X_0)]$ gives the claim.
\end{proof}

Lemma \ref{lem_vector_grad1} requires uniqueness of two selectors. The first one is
the inner minimizer
$
Y^*_{t,\theta,\lambda}(x,a;x').
$
This uniqueness is easy to obtain under standard convexity assumptions as discussed in Section \ref{app:unique}. The dual maximizer may be set-valued.

 If the inner minimizer is unique and the
envelope theorem applies, then
$$
\partial_\lambda F_t^\theta(\lambda;x,a)
=
R_t^\theta(x,a,\lambda)-\varepsilon^q,
$$
where
$$
R_t^\theta(x,a,\lambda)
:=
\E_{X\sim\P_t^0(x,a,\cdot)}
\Big[
\|X-y_t^*(x,a,X,\lambda,\theta)\|^q
\Big].
$$
Thus, if $\lambda\mapsto R_t^\theta(x,a,\lambda)$ is continuous, strictly
decreasing on $(0,\Lambda)$, and satisfies
$$
R_t^\theta(x,a,0^+)>\varepsilon^q>
R_t^\theta(x,a,\Lambda^-),
$$
then there is a unique
$
\lambda_t^*(x,a,\theta)\in(0,\Lambda)
$
such that
$
R_t^\theta(x,a,\lambda_t^*)=\varepsilon^q.
$
Since $F_t^\theta(x,a,\cdot)$ is concave, this point is the unique maximizer.\\

\noindent We can check this setting in smooth quadratic models. For example, when $q=2$,
set
$
H(y):=f(t,x,a,y)+V_{t+1}^\theta(y).
$
If $H$ is $C^2$ and convex, and if the minimizer
$
y_\lambda(x'):=y_t^*(x,a,x',\lambda,\theta)
$
is interior, then
$$
\nabla H(y_\lambda)+2\lambda(y_\lambda-x')=0.
$$
Differentiating in $\lambda$ gives
$$
\partial_\lambda y_\lambda
=
2\big(\nabla^2 H(y_\lambda)+2\lambda I\big)^{-1}(x'-y_\lambda),
$$
and therefore
$$
\frac{d}{d\lambda}\|x'-y_\lambda\|^2
=
-4
(x'-y_\lambda)^\top
\big(\nabla^2 H(y_\lambda)+2\lambda I\big)^{-1}
(x'-y_\lambda)
\le 0.
$$
The inequality is strict when $x'\ne y_\lambda$. Hence, the map
$
\lambda\mapsto R_t^\theta(x,a,\lambda)
$
is strictly decreasing. In the Linear Quadratic (LQ) case (see \ref{sec: Numerical experiments} for example), $H$ is quadratic, so the
first-order condition is linear and the scalar dual problem can be checked
directly. The gradient-valued recursion of Lemma \ref{lem_vector_grad1} is then
appropriate when the dual maximizer is unique and interior. 

\subsection{Convergence of an exact tabular ascent scheme}\label{subsec:convergence}
This subsection builds an ascent scheme for $J$ and proves its convergence. The scheme follows the classical pattern of active-set methods with a shrinking tolerance \citep[Sections~1.2 and~2.4]{Polak1997}.

Throughout, $\Xc$ and $A$ are finite, $\varepsilon>0$, $\zeta\ge0$, and $\Theta=[-B,B]^{d_\theta}$. Each $\pi_t^\theta(x,a)$ is strictly positive and $C^2$ near $\Theta$. Critics, transport programs and dual problems are exact. The $\zeta$-active feasible directions at $\theta$ are
$$
T_\zeta(\theta):=\big\{r\in\R^{d_\theta}:\ \|r\|_\infty\le1,\ r_i\le0\ \text{if}\ \theta_i\ge B-\zeta,\ r_i\ge0\ \text{if}\ \theta_i\le-B+\zeta\big\},
$$
so that $\theta+hr\in\Theta$ for $r\in T_\zeta(\theta)$ and $h\in[0,\zeta]$. By Theorem \ref{thm: finite_case}, for each $(t,x,a)\in\Tc\times\Xc\times A$ the ball $\mathbb{B}_t^{\varepsilon,q}(\P_t^0(x,a,\cdot))\subseteq\Pc(\Xc)$ is a polytope. Let $\mathcal V_t(x,a)\subseteq\mathbb{B}_t^{\varepsilon,q}(\P_t^0(x,a,\cdot))$ denote its vertex set, which is finite and does not depend on $\theta$. A selection is a map
$
\sigma:\Tc\times\Xc\times A\to\Pc(\Xc),$ such that $
\sigma(t,x,a)\in\mathcal V_t(x,a),$ for all $(t,x,a)\in\Tc\times\Xc\times A,$ and the set of selections, $$\Sigma:=\prod_{(t,x,a)\in\Tc\times\Xc\times A}\mathcal V_t(x,a),$$ is finite. Each $\sigma\in\Sigma$ defines a non-robust MDP with the fixed transition kernels $\P_t(x,a,\cdot):=\sigma(t,x,a)$, and we write $V^{\sigma,\theta}_t:\Xc\to\R$ and $Q^{\sigma,\theta}_t:\Xc\times A\to\R$ for the value functions of the policy $\pi^\theta$ in this MDP, for $t\in\Tc$, and $J_\sigma:\Theta\to\R$, $J_\sigma(\theta):=\E_{\mu_0}[V^{\sigma,\theta}_0(X_0)]$, for its performance. With $\langle \P,\phi\rangle:=\sum_{y\in\Xc}\P(\{y\})\phi(y)$ for $\P\in\Pc(\Xc)$ and $\phi:\Xc\to\R$, define the gaps $g^{\sigma,\theta}_t:\Xc\times A\to[0,\infty)$ and, for $\zeta\ge0$, the $\zeta$-active selections $S_\zeta(\theta)\subseteq\Sigma$ by
{\small $$
g^{\sigma,\theta}_t(x,a):=\big\langle\sigma(t,x,a),\,f(t,x,a,\cdot)+V^{\theta}_{t+1}\big\rangle-G_t^\theta(x,a),
\quad
S_\zeta(\theta):=\big\{\sigma\in\Sigma:\ g^{\sigma,\theta}_t(x,a)\le\zeta,~ \forall(t,x,a)\in\Tc\times\Xc\times A\big\}.
$$}
Writing $\mathcal V^{\zeta,\theta}_t(x,a):=\{\P\in\mathcal V_t(x,a):\langle \P,f(t,x,a,\cdot)+V^\theta_{t+1}\rangle\le G^\theta_t(x,a)+\zeta\}$ for the $\zeta$-optimal vertices of the transport program at $(t,x,a)$, one has $S_\zeta(\theta)=\prod_{(t,x,a)\in\Tc\times\Xc\times A}\mathcal V^{\zeta,\theta}_t(x,a)$.

\begin{Theorem}\label{lem:min-structure} The following results hold:
\begin{itemize}
\item[(i)] Each $J_\sigma$ is $C^2$ near $\Theta$, $J=\min_{\sigma\in\Sigma}J_\sigma$ on $\Theta$, $S_0(\theta)\ne\emptyset$, and $S_0(\theta)\subseteq\Sigma^{*}(\theta):=\{\sigma\in\Sigma:J_\sigma(\theta)=J(\theta)\}$.
\item[(ii)] For every direction $r\in\R^{d_\theta}$ with $\theta+hr\in\Theta$ for all small $h>0$,
$$
D_\theta^{+}J(\theta)[r]=\min_{\sigma\in\Sigma^{*}(\theta)}\big\langle\nabla_\theta J_\sigma(\theta),r\big\rangle,
\quad
D_\theta^{-}J(\theta)[r]=\max_{\sigma\in\Sigma^{*}(\theta)}\big\langle\nabla_\theta J_\sigma(\theta),r\big\rangle,
$$
the second formula holding for directions feasible on both sides, consistently with Theorem \ref{thm: finite_case}. Here $\nabla_\theta J_\sigma(\theta)=\E^{\pi^\theta,\sigma}\big[\sum_{t=0}^{T-1}Q^{\sigma,\theta}_t(X_t,a_t)\nabla_\theta\log\pi_t^\theta(X_t,a_t)\big]$ is the classical policy gradient of the $\sigma$-MDP.
\item[(iii)] $J_\sigma(\theta)-J(\theta)=\E^{\pi^\theta,\sigma}\big[\sum_{t=0}^{T-1}g^{\sigma,\theta}_t(X_t,a_t)\big]$ for every $\sigma\in\Sigma$ and $\theta\in\Theta$, under the path law of $(\mu_0,\pi^\theta,\sigma)$. In particular $J_\sigma(\theta)\le J(\theta)+T\zeta$ for every $\sigma\in S_\zeta(\theta)$.
\item[(iv)] There are $C_0,C_1,C_2$, depending only on the model data and $B$, such that all $J_\sigma$ and $J$ are $C_0$-Lipschitz on $\Theta$ in the norm $\|\cdot\|_\infty$, $|g^{\sigma,\theta}_t(x,a)-g^{\sigma,\theta'}_t(x,a)|\le C_1\|\theta-\theta'\|_\infty$ for all $\sigma\in\Sigma$, $(t,x,a)\in\Tc\times\Xc\times A$ and $\theta,\theta'\in\Theta$, and $|J_\sigma(\theta+hr)-J_\sigma(\theta)-h\langle\nabla_\theta J_\sigma(\theta),r\rangle|\le C_2h^2$ for all $\sigma\in\Sigma$, $\|r\|_\infty\le1$ and $\theta,\theta+hr\in\Theta$.
\end{itemize}
\end{Theorem}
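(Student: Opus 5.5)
The plan is to reduce everything to the finite family of non-robust MDPs indexed by $\Sigma$. The two key facts are: each $J_\sigma$ is a smooth finite-dimensional function of $\theta$, and the robust recursion of Theorem \ref{thm : DPP + P^*} picks, at every $(t,x,a)$, a minimizer of a linear functional over the polytope $\mathbb{B}_t^{\varepsilon,q}(\P_t^0(x,a,\cdot))$. That this ball is a polytope follows from the proof of Theorem \ref{thm: finite_case}, since it equals $L(F)$ with $F$ a polytope. Such a minimum is always attained at a vertex.

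\textbf{Part (i).} Since $\Xc$, $A$ and $\Tc$ are finite, $V^{\sigma,\theta}_t$ is obtained by finitely many sums and products of the entries $\pi^\theta_t(x,a)$ with $\theta$-independent data. Hence $J_\sigma$ is $C^2$ near $\Theta$, because each $\pi^\theta_t(x,a)$ is.

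For the inequality $J_\sigma\ge J$, note that every $\sigma$ is an admissible (trivially measurable) selection from the balls. So the path law of $(\mu_0,\pi^\theta,\sigma)$ lies in $\Bc_{x,\pi^\theta}$, and $V^{\sigma,\theta}_0\ge V^\theta_0$ pointwise by the definition of $V^\theta_0$ as an infimum.

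For the matching equality, at each $(t,x,a)$ the linear map $\P\mapsto\langle\P,f(t,x,a,\cdot)+V^\theta_{t+1}\rangle$ attains its minimum $G_t^\theta(x,a)$ over the ball at some vertex, by the argument used for $F^*=\mathrm{conv}(W^*)$ in Theorem \ref{thm: finite_case}. Choosing such a vertex at each $(t,x,a)$ gives an element of $S_0(\theta)$, so $S_0(\theta)\neq\emptyset$. The inclusion $S_0(\theta)\subseteq\Sigma^*(\theta)$ then follows from (iii) with $\zeta=0$. This gives $J=\min_\sigma J_\sigma$.

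\textbf{Part (iii).} I would prove the performance-difference identity by backward induction. Using $V_t^{\sigma,\theta}(x)=\sum_a\pi^\theta_t(x,a)\langle\sigma(t,x,a),f+V^{\sigma,\theta}_{t+1}\rangle$ and $V_t^\theta(x)=\sum_a\pi_t^\theta(x,a)G_t^\theta(x,a)$, one gets
$$V_t^{\sigma,\theta}(x)-V_t^\theta(x)=\sum_a\pi_t^\theta(x,a)\Big[g_t^{\sigma,\theta}(x,a)+\big\langle\sigma(t,x,a),V^{\sigma,\theta}_{t+1}-V^\theta_{t+1}\big\rangle\Big].$$
Unrolling from $V_T^{\sigma,\theta}=V^\theta_T=g$ and integrating against $\mu_0$ yields the stated expectation. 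Since $g_t^{\sigma,\theta}\le\zeta$ on $S_\zeta(\theta)$, the bound $J_\sigma\le J+T\zeta$ follows.

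\textbf{Part (ii).} This is the standard directional Danskin argument for a minimum of finitely many $C^1$ functions. By continuity, for $h>0$ small every inactive $\sigma$ satisfies $J_\sigma(\theta+hr)>J(\theta+hr)$. Therefore $J(\theta+hr)=\min_{\sigma\in\Sigma^*(\theta)}\big(J(\theta)+h\langle\nabla J_\sigma(\theta),r\rangle+o(h)\big)$, which gives the right derivative as a minimum. For $h<0$, dividing by a negative $h$ turns the minimum into a maximum, giving the left formula. The expression for $\nabla_\theta J_\sigma$ is the classical finite-horizon likelihood-ratio policy-gradient theorem for the fixed-kernel MDP, obtained by differentiating the product path density.

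\textbf{Part (iv).} This is where I expect the only real bookkeeping, namely uniformity of the constants. Since $\Sigma$ is finite and each $J_\sigma$ is $C^2$ on a neighborhood of the compact set $\Theta$, the first and second derivatives are bounded by constants independent of $\sigma$. This gives $C_0$ for all $J_\sigma$ and, via Taylor's theorem, $C_2$. The same constant works for $J=\min_\sigma J_\sigma$, because a minimum of $C_0$-Lipschitz functions is $C_0$-Lipschitz.

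For the gaps, I would repeat the argument of (i) pointwise in $x$ to get $V^\theta_t(x)=\min_\sigma V^{\sigma,\theta}_t(x)$. Hence $V_{t+1}^\theta$ is uniformly Lipschitz in $\theta$. Then $G_t^\theta(x,a)=\min_{\P\in\mathcal V_t(x,a)}\langle\P,f+V^\theta_{t+1}\rangle$ is Lipschitz as a minimum over a finite $\theta$-independent vertex set, and $\langle\sigma(t,x,a),f+V^\theta_{t+1}\rangle$ is Lipschitz with the same constant. So $g_t^{\sigma,\theta}$ is $C_1$-Lipschitz with $C_1$ independent of $\sigma$.
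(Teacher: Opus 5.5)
Your proposal is correct and follows essentially the same approach as the paper's proof. You reduce to the finite family of vertex-selection MDPs, obtain $S_0(\theta)\neq\emptyset$ from attainment of the linear minimum at a vertex of the polytope ball, prove (iii) by the telescoping performance-difference identity, and derive the uniform constants in (iv) from the finiteness of $\Sigma$. The differences are minor and equally valid: you get $J\le J_\sigma$ from the infimum definition of $V_0^\theta$ rather than by backward induction, and in (ii) you use local stability of the inactive selections instead of the paper's pigeonhole argument along a sequence $h_n\downarrow 0$.
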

\begin{proof}
(i) $J_\sigma(\theta)=\sum_{(x_0,a_0,\dots,x_{T})}\mu_0(x_0)\prod_{t=0}^{T-1}\pi_t^\theta(x_t,a_t)\,\sigma(t,x_t,a_t)(\{x_{t+1}\})\Big[\textstyle\sum_{t=0}^{T-1}f(t,x_t,a_t,x_{t+1})+g(x_T)\Big],$ is a finite sum and $\pi_t^\theta(x_t,a_t)$ is $C^2$. Hence $J_\sigma\in C^2$. Backward induction gives $V_t^\theta\le V_t^{\sigma,\theta}$. Indeed $G_t^\theta(x,a)\le\langle\sigma(t,x,a),f+V^\theta_{t+1}\rangle\le\langle\sigma(t,x,a),f+V^{\sigma,\theta}_{t+1}\rangle$, and averaging over $\pi^\theta$ preserves the inequality, so $J\le\min_{\sigma\in\Sigma}J_\sigma$. Conversely, \eqref{eq:finite-LP} reaches its minimum at a vertex, so some $\sigma^{*}\in\Sigma$ has $g^{\sigma^{*},\theta}\equiv0$, and backward induction gives $V^{\sigma^{*},\theta}=V^\theta$. Hence $J_{\sigma^{*}}(\theta)=J(\theta)$, $S_0(\theta)\ne\emptyset$, and $S_0(\theta)\subseteq\Sigma^{*}(\theta)$.

(ii) For $\sigma\in\Sigma^{*}(\theta)$, $J(\theta+hr)\le J_\sigma(\theta+hr)=J(\theta)+h\langle\nabla J_\sigma(\theta),r\rangle+o(h)$, so the upper limit of the quotient as $h\downarrow0$ is at most the stated minimum. For the lower bound, set $\ell:=\liminf_{h\downarrow0}\big(J(\theta+hr)-J(\theta)\big)/h$ and choose a sequence $(h_n)_{n\ge0}$ with $h_n\downarrow0$ along which the quotient converges to $\ell$. For each $n\ge0$, (i) applied at $\theta+h_nr$ provides $\sigma_n\in\Sigma$ with $J_{\sigma_n}(\theta+h_nr)=J(\theta+h_nr)$. Since $\Sigma$ is finite, some $\sigma\in\Sigma$ satisfies $\sigma_n=\sigma$ for infinitely many $n\ge0$, and we pass to the corresponding subsequence, still denoted $(h_n)_{n\ge0}$, so that
$J_\sigma(\theta+h_nr)=J(\theta+h_nr)$ for all $n\ge0.$ Both $J_\sigma$ and $J$ are continuous on $\Theta$, the first being $C^2$ and the second a finite minimum of continuous functions, so letting $n\to\infty$ in this identity gives $J_\sigma(\theta)=J(\theta)$, that is $\sigma\in\Sigma^{*}(\theta)$. Along the subsequence the two difference quotients therefore coincide, and differentiability of $J_\sigma$ gives
$$
\frac{J(\theta+h_nr)-J(\theta)}{h_n}=\frac{J_\sigma(\theta+h_nr)-J_\sigma(\theta)}{h_n}\ \xrightarrow[n\to\infty]{}\ \big\langle\nabla_\theta J_\sigma(\theta),r\big\rangle.
$$
Hence $\ell=\langle\nabla_\theta J_\sigma(\theta),r\rangle\ge\min_{\sigma'\in\Sigma^{*}(\theta)}\langle\nabla_\theta J_{\sigma'}(\theta),r\rangle$. Together with the upper bound on the upper limit, the quotient converges and $D^{+}_\theta J(\theta)[r]$ equals the stated minimum. The left formula follows from the same argument applied to $h\uparrow0$, with minima replaced by maxima.

(iii) With $\Delta_t:=V^{\sigma,\theta}_t-V^\theta_t$ for $t\in\Tc$ and $\Delta_T\equiv0$, $\Delta_t(x)=\E_{a\sim\pi_t^\theta(x,\cdot)}\big[g^{\sigma,\theta}_t(x,a)+\langle\sigma(t,x,a),\Delta_{t+1}\rangle\big]$, which telescopes along the $(\pi^\theta,\sigma)$-flow.

(iv) Finitely many $C^2$ functions on the compact $\Theta$, over the finite $\Sigma$, give $C_0$ and $C_2$. The gap $g^{\sigma,\theta}_t$ depends on $\theta$ through $V^\theta_{t+1}$ and $G^\theta_t$, which are finite minima of $C_0$-Lipschitz functions, and this gives $C_1$.
\end{proof}

\begin{algorithm}[H]
\caption{Exact robust policy ascent}
\label{alg:tabular_ascent}
\footnotesize
\begin{algorithmic}[1]
\Require $\theta_0\in\Theta$, $\zeta_0>0$, $\nu\in(0,1)$, and $C_1,C_2$ of Theorem \ref{lem:min-structure}(iv)
\State \textbf{for} $k=0,1,2,\dots$ \textbf{do}
\State \quad Run exact robust DP at $\theta_k$, computing $V^{\theta_k}_t$, $G^{\theta_k}_t$, the $\zeta_k$-optimal vertex sets of \eqref{eq:finite-LP}, and hence $S_{\zeta_k}(\theta_k)$
\State \quad Solve the linear program $\Delta_k:=\displaystyle\max_{r\in T_{\zeta_k}(\theta_k)}\ \min_{\sigma\in S_{\zeta_k}(\theta_k)}\big\langle\nabla_\theta J_\sigma(\theta_k),r\big\rangle$ and pick a maximizer $r_k$
\State \quad \textbf{if} $\Delta_k\le\zeta_k$ \textbf{then} $\theta_{k+1}\leftarrow\theta_k$ and $\zeta_{k+1}\leftarrow\nu\zeta_k$ \Comment{refinement}
\State \quad \textbf{else} $\theta_{k+1}\leftarrow\theta_k+h_kr_k$ with $h_k:=\min\{\zeta_k,\ \zeta_k/C_1,\ \Delta_k/(2C_2)\}$ and $\zeta_{k+1}\leftarrow\zeta_k$ \Comment{ascent}
\end{algorithmic}
\end{algorithm}

\begin{Lemma}\label{lem:increase}
On an ascent step, $\theta_{k+1}\in\Theta$ and $J(\theta_{k+1})\ge J(\theta_k)+\tfrac12h_k\Delta_k>J(\theta_k)+\tfrac12h_k\zeta_k$.
\end{Lemma}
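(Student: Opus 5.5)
The argument rests on Theorem \ref{lem:min-structure}. First, $\theta_{k+1}=\theta_k+h_kr_k$ stays in $\Theta$. This holds because $r_k\in T_{\zeta_k}(\theta_k)$ and $h_k\le\zeta_k$, and by construction $\theta+hr\in\Theta$ for every $r\in T_\zeta(\theta)$ and $h\in[0,\zeta]$. The increase claim then reduces to one point: $J(\theta_{k+1})$ is a minimum over all selections in $\Sigma$, so we must show that the minimum at $\theta_{k+1}$ is attained inside the $\zeta_k$-active set $S_{\zeta_k}(\theta_k)$. On that set we control the linear growth through $\Delta_k$.

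\textbf{Step 1 (active selections persist).} By Theorem \ref{lem:min-structure}(i), applied at $\theta_{k+1}$, there is $\sigma'\in S_0(\theta_{k+1})$ with $J_{\sigma'}(\theta_{k+1})=J(\theta_{k+1})$. We need $\sigma'\in S_{\zeta_k}(\theta_k)$. By (iv), for every $(t,x,a)$,
\[
g^{\sigma',\theta_k}_t(x,a)\le g^{\sigma',\theta_{k+1}}_t(x,a)+C_1\|\theta_{k+1}-\theta_k\|_\infty\le 0+C_1h_k\|r_k\|_\infty\le C_1h_k\le\zeta_k,
\]
using $\|r_k\|_\infty\le1$ and $h_k\le\zeta_k/C_1$. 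Hence $\sigma'\in S_{\zeta_k}(\theta_k)$. This is the main point of the proof: it explains why the step size is capped by $\zeta_k/C_1$, since this cap prevents a new inactive vertex selection from becoming the minimizer after the step.

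\textbf{Step 2 (quadratic Taylor bound).} For this $\sigma'$, (iv) gives
\[
J(\theta_{k+1})=J_{\sigma'}(\theta_k+h_kr_k)\ge J_{\sigma'}(\theta_k)+h_k\langle\nabla_\theta J_{\sigma'}(\theta_k),r_k\rangle-C_2h_k^2.
\]
We have $J_{\sigma'}(\theta_k)\ge J(\theta_k)$ because $J=\min_\sigma J_\sigma$ by (i). Since $\sigma'\in S_{\zeta_k}(\theta_k)$ and $r_k$ attains the max–min defining $\Delta_k$, we also have $\langle\nabla_\theta J_{\sigma'}(\theta_k),r_k\rangle\ge\Delta_k$. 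Finally, $h_k\le\Delta_k/(2C_2)$ gives $C_2h_k^2\le\tfrac12h_k\Delta_k$. Combining these,
\[
J(\theta_{k+1})\ge J(\theta_k)+h_k\Delta_k-\tfrac12h_k\Delta_k=J(\theta_k)+\tfrac12h_k\Delta_k.
\]
On an ascent step $\Delta_k>\zeta_k$ and $h_k>0$, so $\tfrac12h_k\Delta_k>\tfrac12h_k\zeta_k$, which is the strict inequality.

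If $C_1=0$ or $C_2=0$, we interpret the corresponding quotient in $h_k$ as $+\infty$; the bounds above still hold trivially.
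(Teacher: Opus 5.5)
Your proof is correct, and it is shorter than the paper's. Both arguments need a selection that attains $J(\theta_{k+1})$ and is $\zeta_k$-active at $\theta_k$. Both then finish with the same Taylor bound from Theorem~\ref{lem:min-structure}(iv). The difference is how that selection is obtained.

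\textbf{The paper's route.} It starts from an arbitrary minimizer $\sigma_h\in\Sigma^{*}(\theta_{k+1})$, which need not lie in $S_0(\theta_{k+1})$. It then uses part (iii) and the strict positivity of $\pi^{\theta_{k+1}}$ to show that the gaps of $\sigma_h$ vanish on the triples reached under its own path law. On unreached triples it substitutes some $\sigma^\star\in S_0(\theta_k)$. This leaves the path law unchanged, so the repaired selection still attains $J(\theta_{k+1})$, and it has zero gap at $\theta_k$ on those triples.

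\textbf{Your route.} You take $\sigma'$ directly in $S_0(\theta_{k+1})$. Part (i) guarantees this set is nonempty and contained in $\Sigma^{*}(\theta_{k+1})$. The gaps of $\sigma'$ vanish at every triple, so the uniform Lipschitz gap bound gives $g^{\sigma',\theta_k}\le C_1h_k\le\zeta_k$ everywhere, and no grafting is needed. Consequently this lemma no longer depends on (iii) or on the positivity of the policy.

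\textbf{What each buys.} The paper's construction proves slightly more than the lemma needs: \emph{any} minimizing selection at $\theta_{k+1}$ can be altered off its reached triples into a $\zeta_k$-active selection at $\theta_k$ with the same path law. Your argument uses only the part of (i) that already holds at every point of $\Theta$, and that suffices here.
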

\begin{proof}
Feasibility holds since $r_k\in T_{\zeta_k}(\theta_k)$ and $h_k\le\zeta_k$. Write $\theta,h,r$ for $\theta_k,h_k,r_k$. Choose $\sigma_h\in\Sigma$ with $J_{\sigma_h}(\theta+hr)=J(\theta+hr)$ and $\sigma^\star\in S_0(\theta)$, both of which exist by Theorem \ref{lem:min-structure}(i), write $\P^{h}$ for the path law of $(\mu_0,\pi^{\theta+hr},\sigma_h)$, and call $(t,x)\in\Tc\times\Xc$ reached if $\P^{h}(X_t=x)>0$. Since $\pi^{\theta+hr}_t(x,a)>0$ for all $(t,x,a)\in\Tc\times\Xc\times A$, one has $\P^{h}(X_t=x,a_t=a)>0$ for every reached $(t,x)$ and every $a\in A$. Theorem \ref{lem:min-structure}(iii) applied at $\theta+hr$ gives $\E^{\P^{h}}\big[\sum_{t=0}^{T-1}g^{\sigma_h,\theta+hr}_t(X_t,a_t)\big]=J_{\sigma_h}(\theta+hr)-J(\theta+hr)=0$, and the summands being nonnegative, $g^{\sigma_h,\theta+hr}_t(x,a)=0$ whenever $\P^{h}(X_t=x,a_t=a)>0$, that is at every reached triple. Define 
$$
\sigma'(t,x,a):=
\begin{cases}
\sigma_h(t,x,a) & \text{if }(t,x)\text{ is reached},\\[2pt]
\sigma^\star(t,x,a) & \text{otherwise}.
\end{cases}\in\Sigma.
$$
The marginals $\P^{h}(X_t=\cdot)$ are determined recursively by the kernels at reached triples only, so the path law of $(\mu_0,\pi^{\theta+hr},\sigma')$ equals $\P^{h}$ and $J_{\sigma'}(\theta+hr)=J(\theta+hr)$. Moreover $\sigma'\in S_{\zeta_k}(\theta)$, since $g^{\sigma',\theta}_t(x,a)=g^{\sigma^\star,\theta}_t(x,a)=0$ at unreached triples, while at reached triples (iv) gives $g^{\sigma',\theta}_t(x,a)\le g^{\sigma',\theta+hr}_t(x,a)+C_1h=C_1h\le\zeta_k$. Hence, by  (iv) and $J_{\sigma'}(\theta)\ge J(\theta)$,
$$
J(\theta+hr)=J_{\sigma'}(\theta+hr)\ \ge\ J_{\sigma'}(\theta)+h\big\langle\nabla_\theta J_{\sigma'}(\theta),r\big\rangle-C_2h^2\ \ge\ J(\theta)+h\Delta_k-C_2h^2,
$$
where the last step uses $\sigma'\in S_{\zeta_k}(\theta)$ and the definition of $\Delta_k$ as a max--min. Finally $C_2h\le\Delta_k/2$ by the choice of $h_k$, and $\Delta_k>\zeta_k$ on ascent steps.

\end{proof}

\begin{Definition}\label{def:stationarity}
A point $\bar\theta\in\Theta$ is directionally stationary if $D_\theta^{+}J(\bar\theta)[r]\le0$ for all $r\in T_0(\bar\theta)$. For $\zeta\ge0$, a point $\theta\in\Theta$ is $\zeta$-stationary if for every $r\in T_\zeta(\theta)$ there exists $\sigma\in S_\zeta(\theta)$ with $\langle\nabla_\theta J_\sigma(\theta),r\rangle\le\zeta$.
\end{Definition}
\begin{Theorem}\label{thm:convergence}
Let $(\theta_k,\zeta_k)_{k\ge0}$ be generated by Algorithm \ref{alg:tabular_ascent} and let $\{k\ge0:\ \Delta_k\le\zeta_k\}$ be the set of refinement iterations. $(J(\theta_k))_{k\ge0}$ is nondecreasing and convergent, refinements are infinitely many, $\zeta_k\to0$ as $k\to\infty$, and every accumulation point $\bar\theta$ of the refinement iterates is directionally stationary.
\end{Theorem}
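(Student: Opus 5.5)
Monotonicity follows step by step. A refinement leaves $\theta$ unchanged, and an ascent strictly increases $J$ by Lemma \ref{lem:increase}. Since $J$ is continuous on the compact $\Theta$, it is bounded above, so $J(\theta_k)\uparrow J^\infty<\infty$.

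\textbf{Infinitely many refinements.} Suppose refinements stop after some $k_0$. Then $\zeta_k=\zeta_{k_0}=:\bar\zeta>0$ for all $k\ge k_0$, and every step is an ascent with $\Delta_k>\bar\zeta$. Hence $h_k\ge\min\{\bar\zeta,\bar\zeta/C_1,\bar\zeta/(2C_2)\}=:\underline h>0$. Lemma \ref{lem:increase} gives $J(\theta_{k+1})-J(\theta_k)\ge \tfrac12\underline h\,\bar\zeta$ at every step, which contradicts boundedness. Each refinement multiplies $\zeta$ by $\nu$, so $\zeta_k\to0$.

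\textbf{Stationarity of accumulation points.} Let $(\theta_{k_j})$ be refinement iterates converging to $\bar\theta$. Then $\zeta_{k_j}\to0$, and each $\theta_{k_j}$ satisfies $\Delta_{k_j}\le\zeta_{k_j}$, i.e.\ it is $\zeta_{k_j}$-stationary in the sense of Definition \ref{def:stationarity}.

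Fix $r\in T_0(\bar\theta)$ with $\|r\|_\infty\le1$. Our goal is $D^+_\theta J(\bar\theta)[r]\le 0$. By Theorem \ref{lem:min-structure}(ii), it suffices to exhibit $\sigma\in\Sigma^*(\bar\theta)$ with $\langle\nabla J_\sigma(\bar\theta),r\rangle\le0$.

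First we must make $r$ admissible at $\theta_{k_j}$. The set $T_0(\bar\theta)$ restricts only coordinates with $\bar\theta_i=\pm B$. However, $T_{\zeta_{k_j}}(\theta_{k_j})$ also restricts coordinates where $\theta_{k_j,i}$ is within $\zeta_{k_j}$ of the boundary. Since $\theta_{k_j}\to\bar\theta$ and $\zeta_{k_j}\to 0$, for large $j$ these are exactly the coordinates with $\bar\theta_i=\pm B$. On those coordinates the sign constraints coincide with those of $T_0(\bar\theta)$. Hence $r\in T_{\zeta_{k_j}}(\theta_{k_j})$ for $j$ large.

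By $\zeta$-stationarity, there is $\sigma_j\in S_{\zeta_{k_j}}(\theta_{k_j})$ with $\langle\nabla J_{\sigma_j}(\theta_{k_j}),r\rangle\le\zeta_{k_j}$. Since $\Sigma$ is finite, we pass to a subsequence along which $\sigma_j\equiv\sigma$.

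Next we show $\sigma\in\Sigma^*(\bar\theta)$. Theorem \ref{lem:min-structure}(iii) gives $J_\sigma(\theta_{k_j})\le J(\theta_{k_j})+T\zeta_{k_j}$. Both $J_\sigma$ and $J$ are continuous (indeed Lipschitz, by (iv)), so letting $j\to\infty$ gives $J_\sigma(\bar\theta)\le J(\bar\theta)$. Combined with $J\le J_\sigma$ from (i), this yields $J_\sigma(\bar\theta)=J(\bar\theta)$. Alternatively, (iv) shows the gaps satisfy $g^{\sigma,\bar\theta}\le \zeta_{k_j}+C_1\|\theta_{k_j}-\bar\theta\|_\infty\to0$, so $\sigma\in S_0(\bar\theta)\subseteq\Sigma^*(\bar\theta)$.

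Finally, $J_\sigma$ is $C^2$, so $\nabla J_\sigma$ is continuous. Passing to the limit in $\langle\nabla J_\sigma(\theta_{k_j}),r\rangle\le\zeta_{k_j}$ gives $\langle\nabla J_\sigma(\bar\theta),r\rangle\le0$. Hence $D^+J(\bar\theta)[r]=\min_{\Sigma^*(\bar\theta)}\langle\nabla J_{\sigma'}(\bar\theta),r\rangle\le0$.

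The main obstacle is the direction-matching step: an arbitrary $r\in T_0(\bar\theta)$ must belong to $T_{\zeta_{k_j}}(\theta_{k_j})$ eventually. This requires checking that no coordinate with $|\bar\theta_i|<B$ stays $\zeta_{k_j}$-close to the boundary. It holds because $|\bar\theta_i|<B$ leaves a positive margin, while $\zeta_{k_j}\to0$ and $\theta_{k_j}\to\bar\theta$.
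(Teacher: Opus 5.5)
Your proposal is correct and follows essentially the paper's route: boundedness of $J$ plus the uniform gain from Lemma~\ref{lem:increase} at a fixed tolerance forces infinitely many refinements, and stationarity at $\bar\theta$ comes from feasibility of $r$ at $\theta_{k_j}$, finiteness of $\Sigma$, and continuity of the gradients. The only differences are these: you extract a single witness $\sigma$ and certify $\sigma\in\Sigma^{*}(\bar\theta)$ via Theorem~\ref{lem:min-structure}(iii), whereas the paper shows $S_{\zeta_{k_j}}(\theta_{k_j})\subseteq\Sigma^{*}(\bar\theta)$ eventually via the gap bound in (iv) and compares minima; and, as a minor precision, the coordinates constrained in $T_{\zeta_{k_j}}(\theta_{k_j})$ are eventually \emph{contained in} those with $\bar\theta_i=\pm B$ rather than exactly equal to them, which is all your argument needs.
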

\begin{proof}
$J$ is bounded on $\Theta$, and each ascent step at tolerance $\zeta$ gains at least $\tfrac12\min\{\zeta,\zeta/C_1,\zeta/(2C_2)\}\zeta>0$ by Lemma \ref{lem:increase}. Each level therefore holds finitely many ascent steps, refinements are infinite, and $\zeta_k\to0$.

Let $(\theta_{k_j})_{j\ge0}$ be refinement iterates with $\theta_{k_j}\to\bar\theta$ as $j\to\infty$, so that $\Delta_{k_j}\le\zeta_{k_j}$ for all $j\ge0$, and fix $r\in T_0(\bar\theta)$. Two facts hold for all large $j$. First, $r\in T_{\zeta_{k_j}}(\theta_{k_j})$: if $\theta_{k_j,i}\ge B-\zeta_{k_j}$ along a subsequence then $\bar\theta_i=B$ and $r_i\le0$ as required, while coordinates with $\bar\theta_i<B$ are eventually unconstrained, the lower bound being symmetric. Second, $S_{\zeta_{k_j}}(\theta_{k_j})\subseteq\Sigma^{*}(\bar\theta)$: any $\sigma\notin\Sigma^{*}(\bar\theta)$ has some gap $g^{\sigma,\bar\theta}_t(x,a)\ge2\delta>0$, for otherwise $\sigma\in S_0(\bar\theta)\subseteq\Sigma^{*}(\bar\theta)$ by (i), so $g^{\sigma,\theta_{k_j}}_t(x,a)\ge2\delta-C_1\|\theta_{k_j}-\bar\theta\|_\infty>\zeta_{k_j}$ eventually, and the claim follows from finiteness of $\Sigma$. Combining,
$$
\zeta_{k_j}\ \ge\ \Delta_{k_j}\ \ge\ \min_{\sigma\in S_{\zeta_{k_j}}(\theta_{k_j})}\big\langle\nabla_\theta J_\sigma(\theta_{k_j}),r\big\rangle\ \ge\ \min_{\sigma\in\Sigma^{*}(\bar\theta)}\big\langle\nabla_\theta J_\sigma(\theta_{k_j}),r\big\rangle\ \xrightarrow[j\to\infty]{}\ D_\theta^{+}J(\bar\theta)[r].
$$
The second inequality holds because $r$ is feasible for the direction problem by the first fact, the third because a minimum over a subset dominates the minimum over the superset by the second fact, and the limit follows from continuity of the finitely many gradients and (ii).
\end{proof}

\begin{corollary}\label{cor:complexity}
Let $\kappa:=\min\{1,1/C_1,1/(2C_2)\}$ and $B_J:=\sup_{\Theta}J-J(\theta_0)$.
\begin{itemize}
\item[(i)] Every accumulation point of $(\theta_k)_{k\ge0}$ has objective value $J_\infty:=\lim_{k\to\infty}J(\theta_k)$.
\item[(ii)] Each ascent step at tolerance $\zeta$ gains at least $(\kappa/2)\zeta^{2}$. Hence, for every $i\ge0$, at most $2B_J/(\kappa\nu^{2i}\zeta_0^{2})$ ascent steps occur at level $\nu^{i}\zeta_0$, and the refinement ending this level occurs by iteration $(i+1)+\tfrac{2B_J}{\kappa(1-\nu^{2})}(\nu^{i}\zeta_0)^{-2}$.
\item[(iii)] At every refinement iteration $k$, the iterate $\theta_k$ is $\zeta_k$-stationary. For every $\zeta\in(0,\zeta_0]$, a $\zeta$-stationary iterate is produced within $\lceil\log(\zeta_0/\zeta)/\log(1/\nu)\rceil+1+\tfrac{2B_J}{\kappa(1-\nu^{2})\nu^{2}\zeta^{2}}$ iterations.
\end{itemize}
\end{corollary}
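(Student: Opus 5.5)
The three items rest on Lemma \ref{lem:increase}, Theorem \ref{thm:convergence} and the bookkeeping of Algorithm \ref{alg:tabular_ascent}. Since $\zeta_k$ only changes at refinements, the iterations split into levels $\zeta=\nu^i\zeta_0$, $i\ge0$, and I would argue level by level.

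For (ii), on an ascent step at tolerance $\zeta$ we have $\Delta_k>\zeta$. Hence $h_k=\min\{\zeta,\zeta/C_1,\Delta_k/(2C_2)\}\ge\kappa\zeta$. Lemma \ref{lem:increase} then gives
\[
J(\theta_{k+1})-J(\theta_k)\ge\tfrac12 h_k\Delta_k\ge\tfrac12\kappa\zeta^2 .
\]
By Theorem \ref{thm:convergence}, $J(\theta_k)$ is nondecreasing, and $J\le\sup_\Theta J$. The total gain over the whole run is therefore at most $B_J$. This bounds the number of ascent steps at level $\nu^i\zeta_0$ by $2B_J/(\kappa\nu^{2i}\zeta_0^2)$.

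To locate the refinement that closes level $i$, I would count iterations through levels $0,\dots,i$. There are $i+1$ refinements. The ascent steps should be counted jointly, not level by level: their total gain is still at most $B_J$. A crude per-level sum gives $\sum_{j\le i}2B_J/(\kappa\nu^{2j}\zeta_0^2)$, a geometric sum of order $(\nu^i\zeta_0)^{-2}\cdot\frac{2B_J}{\kappa}\cdot\frac{1}{1-\nu^2}$. Since $\sum_{j\le i}\nu^{-2j}=\nu^{-2i}(1-\nu^{2(i+1)})/(1-\nu^2)\le\nu^{-2i}/(1-\nu^2)$, this gives exactly the stated bound $(i+1)+\frac{2B_J}{\kappa(1-\nu^2)}(\nu^i\zeta_0)^{-2}$.

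For (iii), a refinement means $\Delta_k\le\zeta_k$. By the max--min definition of $\Delta_k$, every $r\in T_{\zeta_k}(\theta_k)$ then satisfies $\min_{\sigma\in S_{\zeta_k}(\theta_k)}\langle\nabla J_\sigma(\theta_k),r\rangle\le\zeta_k$. The minimum is attained since $S_{\zeta_k}(\theta_k)\supseteq S_0(\theta_k)\neq\emptyset$ is finite, which is precisely $\zeta_k$-stationarity (Definition \ref{def:stationarity}). The same holds for any larger tolerance, since $T$ and $S$ are monotone in $\zeta$ in the appropriate directions. Now take $\zeta\in(0,\zeta_0]$ and let $i=\lceil\log(\zeta_0/\zeta)/\log(1/\nu)\rceil$, so that $\nu^i\zeta_0\le\zeta$ and $\nu^i\zeta_0>\nu\zeta$. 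The refinement closing level $i$ is $\nu^i\zeta_0$-stationary, hence $\zeta$-stationary. By (ii) it occurs by iteration $(i+1)+\frac{2B_J}{\kappa(1-\nu^2)}(\nu^i\zeta_0)^{-2}\le i+1+\frac{2B_J}{\kappa(1-\nu^2)\nu^2\zeta^2}$, as claimed.

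For (i), $J(\theta_k)\to J_\infty$ by Theorem \ref{thm:convergence}. $J$ is continuous, being $C_0$-Lipschitz by Theorem \ref{lem:min-structure}(iv). For any subsequence $\theta_{k_j}\to\bar\theta$ we get $J(\bar\theta)=\lim J(\theta_{k_j})=J_\infty$.

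The main obstacle is the iteration count in (ii). A per-level count with budget $B_J$ at each level would be too crude, so the gain must be aggregated across levels as above to reach the stated constant.
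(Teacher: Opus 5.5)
Your proof is correct and follows the paper's argument step for step: $h_k\ge\kappa\zeta$ on ascent steps, a total-gain budget $B_J$, geometric summation over the levels $l\le i$, the same choice of $i$ in (iii), and continuity of $J$ for (i); you also make explicit the monotonicity of $T_\zeta(\theta)$ and $S_\zeta(\theta)$ in $\zeta$, which the paper uses tacitly when passing from $\nu^i\zeta_0$-stationarity to $\zeta$-stationarity. Your closing remark is backwards, though: summing the per-level bounds $A_l\le 2B_J/(\kappa\nu^{2l}\zeta_0^2)$, each using the full budget, is exactly what produces the factor $1/(1-\nu^2)$ in the stated bound, whereas genuinely aggregating ($\sum_{l\le i}A_l(\nu^l\zeta_0)^2\le 2B_J/\kappa$ with $(\nu^l\zeta_0)^2\ge(\nu^i\zeta_0)^2$) gives the sharper count $\frac{2B_J}{\kappa}(\nu^i\zeta_0)^{-2}$, so the per-level count is not too crude for the stated constant.
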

\begin{proof}
(i) $J(\theta_k)\uparrow J_\infty$ as $k\to\infty$ and $J$ is continuous. (ii) On such a step $\Delta_k>\zeta$ and $h_k\ge\kappa\zeta$, so Lemma \ref{lem:increase} gives the gain. With $A_l$ the number of ascent steps at level $l\ge0$, the bound $\sum_{l\ge0}A_l(\kappa/2)(\nu^l\zeta_0)^2\le B_J$ gives $A_i\le2B_J/(\kappa\nu^{2i}\zeta_0^{2})$ and $\sum_{l=0}^{i}A_l\le\tfrac{2B_J}{\kappa(1-\nu^{2})(\nu^{i}\zeta_0)^{2}}$, and the refinement ending level $i$ follows these ascent steps and the $i+1$ earlier refinements. (iii) At a refinement, $\Delta_k\le\zeta_k$, and since $\Delta_k$ is the maximum over $r\in T_{\zeta_k}(\theta_k)$ of $\min_{\sigma\in S_{\zeta_k}(\theta_k)}\langle\nabla_\theta J_\sigma(\theta_k),r\rangle$, every such $r$ admits $\sigma\in S_{\zeta_k}(\theta_k)$ with $\langle\nabla_\theta J_\sigma(\theta_k),r\rangle\le\zeta_k$, which is $\zeta_k$-stationarity. For the count, take $i:=\lceil\log(\zeta_0/\zeta)/\log(1/\nu)\rceil$, so that $\nu\zeta<\nu^{i}\zeta_0\le\zeta$. By (ii) the refinement ending level $i$ occurs within the stated bound, and its iterate is $\nu^{i}\zeta_0$-stationary, hence $\zeta$-stationary.
\end{proof}

\subsubsection{A Robust Actor-Critic implementation}
 Throughout this subsection, we assume that the active optimizer is unique, or that all active optimizers induce the same sensitivity vector.
\begin{algorithm}[H]
\caption{Robust Actor-Critic Gradient Algorithm}
\label{alg:robust_actor_critic}
\small                              
\algrenewcommand{\algorithmicindent}{1em}  
\setlength{\abovedisplayskip}{2pt}
\setlength{\belowdisplayskip}{2pt}
\begin{algorithmic}[1]

\Require Policy parameter $\theta \in \Theta$; parametric families $V_{\psi,t} : \Xc \to \R$ and $U_{\xi,t} : \Xc \to \R^{d_\theta}$; samples $(x_t, a_t)$ with $a_t \sim \pi^\theta_t(x_t, \cdot)$ and $X_{t+1}^{(i)} \sim \P_t^0(x_t, a_t, \cdot)$; initial distribution $\mu_0$; actor step size $\eta_\theta$

\State Set $V_{\psi,T}(x) \leftarrow g(x)$ and $U_{\xi,T}(x) \leftarrow 0$ for all $x \in \Xc$

\State \textbf{for} $t = T-1, T-2, \dots, 0$ \textbf{do}
    \State \quad Compute
        $$\widehat{G}_t^\theta(x_t, a_t)
        \;=\;
        \sup_{\lambda \in [0,\Lambda]}
        \Big(
            -\frac{1}{N} \sum_{i=1}^N
            \Fc^\lambda\big(-f(t,x_t,a_t,\cdot)-V_{\psi,t+1}(\cdot)\big)\left(X_{t+1}^{(i)}\Big)
            - \varepsilon^q \lambda
        \right)$$
    \State \quad Let $\widehat\lambda_t^*(x_t,a_t)$ denote a maximizer.
    \State \quad Update $\psi$ by minimizing:
    \[
        \Lc_V(\psi) \;=\; \E_{a\sim\pi^\theta_t(x,\cdot)}\left[\left(V_{\psi,t}(x_t) - \widehat{G}_t^\theta(x_t, a_t)\right)^2\right]
    \]
    \State \quad For each $i = 1, \dots, N$, compute 
    \[
        y^*\left(X_{t+1}^{(i)}\right)
        \;\in\;
        \argmin_{y \in \Xc}
        \left\{
            f(t,x_t,a_t,y)+V_{\psi,t+1}(y)
            + \widehat{\lambda}_t^*(x_t,a_t)\, c\left(X_{t+1}^{(i)}, y\right)
        \right\}
    \]
    \State \quad Use the directional derivative identity
    \[
        -D\Fc^{\widehat{\lambda}_t^*(x_t,a_t)}\left(-f(t,x_t,a_t,\cdot)-V_{\psi,t+1}(\cdot)\right)\left(X_{t+1}^{(i)}\right)\left[U_{\xi,t+1}\right]
        \;\approx\;
        U_{\xi,t+1}\left(y^*\left(X_{t+1}^{(i)}\right)\right)
    \]
    \State \quad Compute:
    \[
        \widehat{\nabla_\theta G_t^\theta}(x_t, a_t)
        \;=\;
        \frac{1}{N} \sum_{i=1}^N
        U_{\xi,t+1}\left(y^*\left(X_{t+1}^{(i)}\right)\right)
    \]
    \State \quad Form the gradient target:
        $$\widehat{z}_t
        \;=\;
        \widehat{G}_t^\theta(x_t,a_t)\,
        \nabla_\theta \log \pi^\theta_t(x_t,a_t)
        \;+\;
        \widehat{\nabla_\theta G_t^\theta}(x_t,a_t)$$
    \State \quad Update $\xi$ by minimizing:
        $$\Lc_U(\xi) \;=\; \E\left[\left\|U_{\xi,t}(x_t) - \widehat{z}_t\right\|^2\right]$$
\State Draw $X_0^{(j)} \sim \mu_0$ for $j = 1, \dots, M$ and update:
    $$\theta
\leftarrow
\Pi_{\Theta}
\left(
\theta
+
\eta_{\theta}\,
\frac{1}{M}
\sum_{j=1}^{M}
U_{\xi,0}\left(X_0^{(j)}\right)
\right),$$
    where $\Pi_{\Theta}$ denotes Euclidean projection onto $\Theta$.
\end{algorithmic}
\end{algorithm}

\begin{Remark} Note that:
\begin{enumerate}
\item In the tabular softmax experiments we take
$\Theta=[-B,B]^{d_\theta}$ and project the iterates after each actor update.
\item While Algorithm \ref{alg:robust_actor_critic} is stated for Wasserstein
ambiguity sets, the same actor--critic architecture can be adapted to other
families of state--action rectangular ambiguity sets
$\{\mathcal B(\mathbb P^0(x,a,\cdot))\}_{x,a}$. Such an extension requires that
the corresponding robust Bellman problem admit a tractable dual
representation, that the worst-case value be attained, and that the relevant
active dual or primal optimizers satisfy the envelope or directional-sensitivity
conditions needed to differentiate the robust Bellman operator. Under these
problem-specific conditions, the implementation changes mainly in two places:
(i) the dual or variational representation used to compute the robust Bellman
target in the critic step, and (ii) the active optimizer, or worst-case
transition law $\P^{*,\theta}$, used in the sensitivity term of the actor update. The remaining
structure of the algorithm is independent of the particular geometry of the ambiguity set.
\item The introduction of parametric families
  $$V_{\psi,t}:\Xc\to\R, \quad U_{\xi,t}:\Xc\to\R^{d_\theta},$$
intended to approximate $V_t^\theta(x)$ and $\nabla_\theta V_t^\theta(x)$ respectively, is not an intrinsic feature of the algorithm but rather a practical necessity in large or continuous state spaces. In settings where $\Xc$ is finite and of moderate size, such as the numerical experiments below, both quantities can be computed exactly by direct tabulation, rendering the approximation step superfluous. Function approximation should therefore be understood as a scalability device.
\item A convergence analysis of Algorithm \ref{alg:robust_actor_critic} appears feasible in standard restricted settings, for example in the finite-state finite-action case with tabular critics, exact solution of the inner robust Bellman problems, sufficient exploration, and Robbins–Monro step sizes. In that regime, the critic and sensitivity updates can be viewed as stochastic approximation schemes for the robust Bellman and sensitivity recursions, while the actor update follows the corresponding robust policy-gradient, or selected directional-gradient, direction. We do not pursue such a result here because the full algorithm also allows function approximation, Monte Carlo Bellman targets, numerical dual optimization, and approximate transport selection. Controlling the interaction of these errors would require additional assumptions and a separate stochastic approximation analysis.
\end{enumerate}
\end{Remark}

\section{Numerical experiments}\label{sec: Numerical experiments}
In this section, we implement the policy-gradient recursion derived in Corollary \ref{corr: dpp_grad}. We first consider finite discrete state and action spaces, where exact dynamic programming can be used as a benchmark for the actor--critic training. We then turn to a continuous-state example to illustrate the applicability of the method beyond the tabular setting. Additional numerical experiments are reported in Appendix \ref{app:num}, including exact robust-DP recovery tests, Bellman-sensitivity ablations, dual-grid sensitivity checks, a supplementary multi-armed bandits example, and further continuous-control experiments. The code can be found at \url{https://github.com/YadhHafsi/Policy-Gradient-Learning-for-Distributionally-Robust-Optimization}.\footnote{The authors are grateful to Ariel Neufeld and Julian Sester for sharing code that was helpful in validating some of the numerical experiments.}
\subsection{Examples}
We report two types of numerical experiments. 
The first group consists of finite tabular examples where the equicontinuity condition is automatic and exact robust dynamic programming can be used as a benchmark. For the finite-state coin-toss and inventory examples, all inner optimizations are over finite sets. We therefore compute the active maximizer/minimizer sets exactly and use the directional derivative recursion. The uniqueness assumptions are not imposed in these experiments. The second group consists of a continuous linear-quadratic example. 
This example lies outside the compact state-action assumptions of the main theory and is compared with a related penalized Riccati formulation rather than with the constrained Wasserstein-ball problem. It is included as a structural sanity check for the implementation with function approximation, not as a theorem-backed benchmark.
\subsubsection{Coin Toss}
We consider a finite-horizon robust control problem on the discrete state space
$\mathcal{X} = \{0,1,\dots,n\},$ where $X_t \in \mathcal{X}$ denotes the number of heads
observed in a block of $n \in \N^*$ Bernoulli trials at time t. We set the number of time steps to be equal to $T = 10$. The action space is given by
$A = \{-1,0,1\},$ where $a=-1$ corresponds to betting that the next block outcome will be lower, $a=0$ to abstaining from betting, and $a=1$ to betting that the next block outcome will be higher. Under the reference model, the next-state distribution is independent of the current state-action pair and satisfies
$$
X_{t+1} \sim \P^0(x,a,\cdot) = \mathrm{Binomial}(n,p_0),
\quad \forall (x,a)\in \mathcal{X}\times A,
$$
where $p_0 \in (0,1)$ denotes the reference success probability. The running reward is time-homogeneous and depends only on $(x,a,x')$: we set
$$
f(t,x,a,x')
=
a\,\mathbbm{1}_{\{x' > x\}}
-
a\,\mathbbm{1}_{\{x' < x\}}
-
|a|\,\mathbbm{1}_{\{x' = x\}},
\quad \forall(t,x,a,x') \in \Tc\times\Xc\times A\times\Xc,
$$
and the terminal reward is
$$
g(x) = 0, \quad \forall x\in\Xc.
$$
Hence a correct directional bet yields reward $+1$, an incorrect bet yields $-1$, and a tie incurs a penalty of $-|a|$. The horizon is $T=10$, the block length is $n\in\N^*$, and the only coefficient of the model is the reference Bernoulli parameter $p_0\in(0,1)$. We equip the state space with the ground cost
$c(x,y)=|x-y|$ for all $(x,y)\in\Xc\times\Xc$, and define the Wasserstein
ambiguity set around the reference transition kernel $\P^0$.

We parametrize $\pi^\theta$ as a time-dependent tabular softmax policy
$$
\pi^\theta_t(x,a) = \frac{\exp(\theta_{t,x,a})}{\sum_{a'\in A}\exp(\theta_{t,x,a'})}, \quad \forall (t,x,a)\in\Tc\times\Xc\times A,
$$
with parameters $\theta = (\theta_{t,x,a})_{t,x,a}\in\R^{|\Tc|\times|\Xc|\times|A|}$. Table \ref{tab:coin_policy_by_epsilon} reports the resulting greedy actions $a^*_t(x) = \argmax_{a\in A}\pi^\theta_t(x,a)$
at each state for increasing $\varepsilon$. As $\varepsilon$ grows, the betting region shrinks symmetrically around
$X_t=5$. The agent abstains from betting on more states, reflecting
increased caution under larger ambiguity sets. At $\varepsilon=2$, the ambiguity set is large enough that the robust policy
abstains at every state. The learned policies match those specified in \cite{neufeld_markov_2023}.
{\small\begin{table}[H]
\centering
\small
\begin{tabular}{cccccccccccc}
\toprule
$X_t$ & $0$ & $1$ & $2$ & $3$ & $4$ & $5$ & $6$ & $7$ & $8$ & $9$ & $10$ \\
\midrule
$a^{\mathrm{non\mbox{-}robust}}_t(X_t)$
& $1$ & $1$ & $1$ & $1$ & $1$ & $0$ & $-1$ & $-1$ & $-1$ & $-1$ & $-1$ \\

$a^{\mathrm{robust},\,\varepsilon=0.5}_t(X_t)$
& $1$ & $1$ & $1$ & $0$ & $0$ & $0$ & $0$ & $0$ & $-1$ & $-1$ & $-1$ \\

$a^{\mathrm{robust},\,\varepsilon=1}_t(X_t)$
& $1$ & $1$ & $0$ & $0$ & $0$ & $0$ & $0$ & $0$ & $0$ & $-1$ & $-1$ \\

$a^{\mathrm{robust},\,\varepsilon=2}_t(X_t)$
& $0$ & $0$ & $0$ & $0$ & $0$ & $0$ & $0$ & $0$ & $0$ & $0$ & $0$ \\
\bottomrule
\end{tabular}
\caption{Greedy action at time $t\in\Tc$ in the coin-toss example as a function
of the state $X_t\in\mathcal{X}$ for the non-robust policy and robust policies with
different Wasserstein radii $\varepsilon \geq 0$.}
\label{tab:coin_policy_by_epsilon}
\end{table}}
Figure \ref{fig:misspecification} shows the cumulated profit of each policy
as the true bias $p_{\mathrm{true}}$ departs from the reference $p_0$. For each strategy, we simulate $10^5$ rounds of the coin-toss game while varying the true bias $p_{\mathrm{true}}$ away from the reference parameter $p_0$.
\begin{figure}[H]
    \centering
    \includegraphics[width=0.45\textwidth]{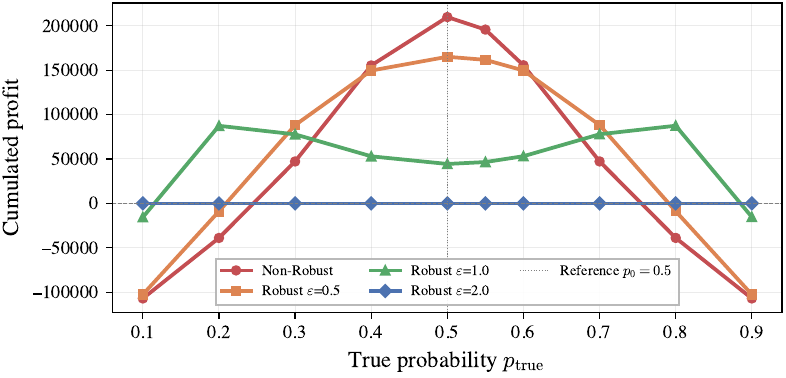}%
    \hspace{0.3cm}
    \includegraphics[width=0.45\textwidth]{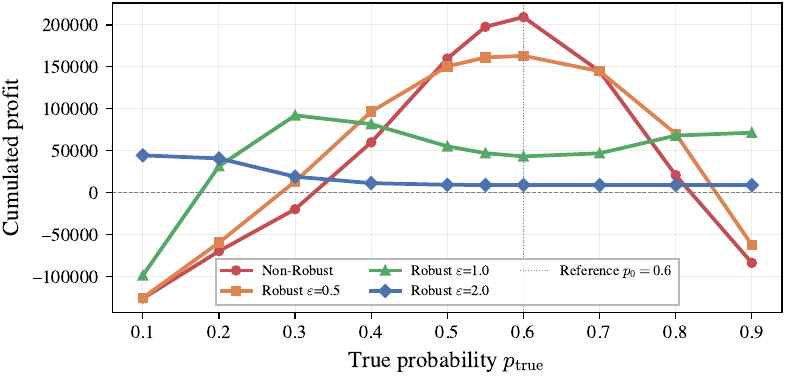}
    \caption{%
        Cumulated profit under model misspecification for $p_0=0.5$ (left)
        and $p_0=0.6$ (right).
    }
    \label{fig:misspecification}
\end{figure}
As $\varepsilon$ increases, the robust policy sacrifices nominal performance near $p_0$ in exchange for substantially improved worst-case performance over the full range of values of $p_{\mathrm{true}}$. This behavior is consistent with the intended effect of distributionally robust control, namely, larger $\varepsilon$ flattens the profit curve: the agent sacrifices peak
performance near $p_0$ for uniform robustness across the full range
of $p_{\mathrm{true}}$.

\subsubsection{Supply chain}
We also consider a finite-horizon inventory control problem with state space
$\mathcal{X} = \{0,1,\dots,n\},$ where $X_t \in \mathcal{X}$ denotes the on-hand
inventory at time $t \in \Tc$. The action space is $A = \{0,1,\dots,n\},$
where $A_t \in A$ denotes the order quantity placed at time $t$. Given state-action pair
$(x,a)\in\mathcal{X}\times A$, the post-order inventory is
$\bar{x} = \min\{n,\,x+a\} \in \mathcal{X}$. Demand $D_t$ is drawn from the reference
distribution $D_t \sim \mathrm{Uniform}\big(\{0,1,\dots,n\}\big),$ and the next inventory
evolves as
$$
X_{t+1} = \max\{0,\,\bar{x} - D_t\} \in \mathcal{X}.
$$
The one-step cost is
$$
\ell(x,a,d)
=
h\,(\bar{x}-d)_+
+
p\,(d-\bar{x})_+
+
k\,\mathbbm{1}_{\{a>0\}},
\quad \forall(x,a,d)\in\mathcal{X}\times A\times\{0,\dots,n\},
$$
where $h>0$ is the holding-cost coefficient, $p>0$ is the shortage-cost coefficient, and
$k>0$ is a fixed ordering cost. The reward is the negative cost,
$$
r(x,a,d) = -\ell(x,a,d), \quad \forall(x,a,d)\in\mathcal{X}\times A\times\{0,\dots,n\}.
$$
Since the actor--critic interface is written in terms of $(x,a,x')$, we equivalently work with the conditional expected reward given the realized next-state inventory level $x'\in\Xc$. Concretely, we set
$$
f(t,x,a,x') = -\,\E_{D\sim\mathrm{Uniform}\{0,\dots,n\}}\big[\ell(x,a,D)\,\big|\,X_{t+1} = x'\big], \quad g(x) = 0, \quad \forall(t,x,a,x')\in\Tc\times\Xc\times A\times\Xc.
$$
For robustness, we place a Wasserstein ambiguity set around the reference demand-induced transition kernel, with state-space cost $c(x,y)=|x-y|$ for all $(x,y)\in\Xc\times\Xc$. The model coefficients are: maximal on-hand inventory $n$, horizon $T$, Wasserstein radius $\varepsilon\ge 0$, holding cost $h>0$, shortage cost $p>0$, and fixed ordering cost $k>0$. In the numerical experiments, we take $n=10$, $\varepsilon=1$, $T=5
$, $h=1$, $p=3$, and $k=2$. As in the coin-toss example, we parametrize $\pi^\theta$ as a time-dependent tabular softmax policy
$$
\pi^\theta_t(x,a) = \frac{\exp(\theta_{t,x,a})}{\sum_{a'\in A}\exp(\theta_{t,x,a'})}, \quad \forall (t,x,a)\in\Tc\times\Xc\times A,
$$
and illustrate the greedy action $a^*_t(x) = \argmax_{a\in A}\pi^\theta_t(x,a)$ extracted from the learned stochastic policy. Figure \ref{fig:supply_chain} reports the results for the learned policy and compares the costs for different ambiguity sets.
\begin{figure}[H]
    \centering
    \includegraphics[width=0.49\textwidth]{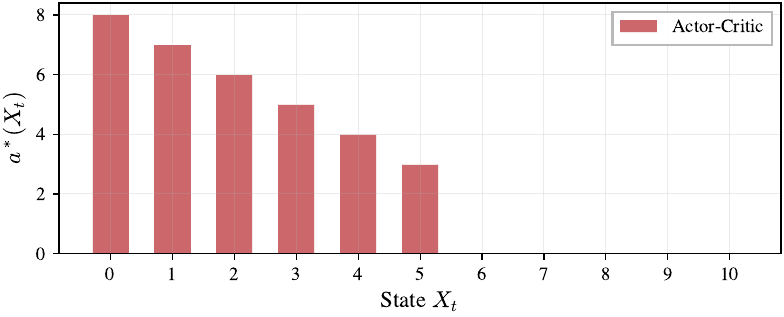}%
    \hspace{0.3cm}
\includegraphics[width=0.28\textwidth]{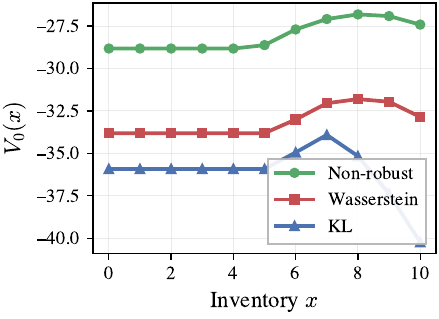}
    \caption{%
        Learned greedy ordering policy $a^*(x)$ at $t=0$ (left); value function $V_0(x)$ for the learned policy under
        Wasserstein ambiguity, KL ambiguity, and the non-robust setting (right).
    }
    \label{fig:supply_chain}
\end{figure}

\subsubsection{Robust Linear-Quadratic Control}
\label{sec:robust_lq}

We consider a finite-horizon linear-quadratic control problem under distributional
uncertainty on the noise. A closely related penalized robust LQ problem admits an explicit Riccati
solution. We use it as a structural sanity check for the learned policy. This benchmark falls outside the compact state-action framework of the theory and serves to test the method in a standard control setting. We include it to test whether the proposed implementation recovers the qualitative structure of a related robust LQ solution. The state dynamics are
\begin{equation}\label{eq:lq_dynamics}
    X_{t+1} = A X_t + B u_t + \Xi w_t, \quad X_0 = x \in \R^d,
    \quad t = 0,\dots,T-1,
\end{equation}
where $A \in \R^{d\times d}$, $B \in \R^{d \times m}$, $\Xi \in \R^{d\times k}$,
$u_t \in \R^m$ is the control, and $w_t \in \R^k$ is a random disturbance with
distribution $\mu_t \in \Pc(\R^k)$. Under the reference model, each $w_t$ is drawn
i.i.d.\ from an empirical distribution
$$\nu = \frac{1}{N}\sum_{i=1}^{N}\delta_{\hat w^{(i)}}$$ constructed from samples
$\{\hat w^{(1)},\dots,\hat w^{(N)}\}$. The running and terminal costs are 
\begin{equation*}
    f(t, x, u, x') = x^\top Q x + u^\top R u,
    \quad
    g(x) = x^\top P_T x, \quad \forall(t,x,u,x') \in \Tc \times \R^d \times \R^m \times \R^d,
\end{equation*}
with $Q \in \R^{d\times d}$, $Q \succeq 0$ (state penalty); $R \in \R^{m\times m}$, $R \succ 0$ (control penalty); and $P_T \in \R^{d\times d}$, $P_T \succeq 0$ (terminal penalty). The remaining coefficients of the model are the dynamics matrices $A\in\R^{d\times d}$, $B\in\R^{d\times m}$, $\Xi\in\R^{d\times k}$, the number of empirical samples $N\in\N^*$, the Wasserstein radius $\varepsilon\ge 0$, and the penalty parameter $\lambda>0$ used in the Riccati benchmark of Proposition \ref{prop:robust_lq}.\\

The agent distrusts the nominal distribution and solves the robust minimax problem
\begin{equation}\label{eq:lq_minimax}
    V_t(x) \;=\; \underset{(\pi_s)_{s \in \mathcal{T}_t} }{\inf}~ \underset{(\gamma_s)_{s \in \mathcal{T}_t}}{\sup}\;
    \E^{\pi,\gamma}\left[
        \sum_{s=t}^{T-1}\bigl(X_s^\top Q\, X_s + u_s^\top R\, u_s\bigr)
        + X_T^\top P_T\, X_T
    \right],
\end{equation}
subject to \eqref{eq:lq_dynamics}, where the controller chooses feedback policies
$u_t = \pi_t(X_t) \in \R^m$ and the adversary chooses, at each stage, a distribution
$\gamma_t(X_t, u_t) \in \Pc\bigl(\{\hat w^{(1)},\dots,\hat w^{(N)}\}\bigr)$
for the noise $w_t$, constrained by
\begin{equation}\label{eq:lq_wass_constraint}
    W_q\bigl(\gamma_t(X_t, u_t),\;\nu\bigr) \;\leq\; \varepsilon,
    \quad t = 0,\dots,T{-}1.
\end{equation}
For reference, we compare against the Wasserstein penalty framework
of \cite{yang2021} and \cite{KimYang2021},
which solves a closely related but different problem. Starting from the same empirical
nominal $\nu$, their adversary transports each atom to any $w \in \R^k$
(not just the $N$~existing atoms), and the robust value is defined as
\begin{equation}\label{eq:lq_ky_value}
    \inf_{\pi}\;\sup_{\gamma}\;
    \E^{\pi,\gamma}\left[
        X_T^\top P_T\, X_T
        + \sum_{s=t}^{T-1}\bigl(X_s^\top Q\, X_s + u_s^\top R\, u_s
        - \lambda\, W_q(\gamma_s, \nu)^p\bigr)
    \;\middle|\; X_0 = x\right],
\end{equation}
where $\lambda > 0$ is a fixed parameter. In this penalized LQ formulation, the quadratic structure is preserved by the Riccati recursion, so the value function remains quadratic at each stage. The supremum over
$\gamma_t \in \Pc(\R^k)$ admits a closed-form solution, yielding the Riccati
recursion of Proposition \ref{prop:robust_lq}.

\begin{Proposition}[\cite{KimYang2021}]
\label{prop:robust_lq}
Define $\Phi := B R^{-1} B^\top - \frac{1}{\lambda}\,\Xi\Xi^\top$ and denote by
$\bar\lambda_t$ the largest eigenvalue of $\Xi^\top \Pi_t \Xi$.
Assume $\lambda > \bar\lambda_t$ for all $t \geq 1$. Then the value function takes the form
$$
    V_t(x) = x^\top \Pi_t\, x + 2\,r_t^\top x + z_t,
    \quad \forall\, (t,x) \in \{0,\dots,T\}\times\R^d,
$$
where $\Pi_t \in \Sc_+^d$ satisfies the robust Riccati recursion
\begin{equation}
\label{eq:robust_riccati}
    \Pi_t = Q + A^\top \bigl(I + \Pi_{t+1}\,\Phi\bigr)^{-1}\Pi_{t+1}\, A,
    \quad \Pi_T = P_T,
    \quad t = T{-}1,\dots,0,
\end{equation}
while $r_t \in \R^d$ and $z_t \in \R$ are given by
\begin{align*}
    r_t &= A^\top (I + \Pi_{t+1}\,\Phi)^{-1}
           (\Pi_{t+1}\,\Xi\,\bar w + r_{t+1}), \quad r_T = 0,\\[4pt]
    z_t &= z_{t+1}
           + \mathrm{tr}\bigl[
               (I - \Xi^\top \Pi_{t+1}\Xi/\lambda)^{-1}\,
               \Xi^\top \Pi_{t+1}\Xi\,\Sigma
             \bigr] \\
        &\quad
           + \bar w^\top \Xi^\top
             \bigl[(I+\Pi_{t+1}\Phi)^{-1}
                   -(I-\Pi_{t+1}\Xi\Xi^\top/\lambda)^{-1}\bigr]
             \Pi_{t+1}\Xi\,\bar w \\
        &\quad
           + (2\,\bar w^\top \Xi^\top - r_{t+1}^\top \Phi)
             (I+\Pi_{t+1}\Phi)^{-1} r_{t+1},
           \quad z_T = 0,
\end{align*}
with $\bar w := \E_\nu[w]$ and $\Sigma := \E_\nu[w\,w^\top]$.
The unique optimal control and worst-case distribution are
\begin{align*}
    u_t^* = K_t\, X_t + L_t, ~ 
    K_t := -R^{-1}B^\top(I+\Pi_{t+1}\Phi)^{-1}\Pi_{t+1}\,A,~
    L_t := -R^{-1}B^\top(I+\Pi_{t+1}\Phi)^{-1}(\Pi_{t+1}\Xi\,\bar w + r_{t+1}),
\end{align*}
where $\gamma_t^*(x) = \frac{1}{N}\sum_{i=1}^{N}\delta_{w_t^{*,(i)}(x)}$, and the support points of the worst-case distribution are
$$
    w_t^{*,(i)}(x) = (\lambda I - \Xi^\top \Pi_{t+1}\Xi)^{-1}
    \bigl(\Xi^\top \Pi_{t+1}(A x + B u_t^*) + \Xi^\top r_{t+1}
          + \lambda\,\hat w^{(i)}\bigr).
$$
\end{Proposition}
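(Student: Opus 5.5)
The plan is a backward induction on $t$ with the quadratic ansatz $V_{t+1}(x)=x^\top\Pi_{t+1}x+2r_{t+1}^\top x+z_{t+1}$, starting from $V_T(x)=x^\top P_Tx$, i.e.\ $\Pi_T=P_T$, $r_T=0$, $z_T=0$. Throughout I read the penalty as the transport cost with quadratic exponent, $W_2(\gamma_s,\nu)^2$, since that is what keeps the problem linear--quadratic. At stage $t$ I would first fix $(x,u)$ and treat the adversary's one-step problem
$$\sup_{\gamma\in\Pc(\R^k)}\Big\{\E_{w\sim\gamma}\big[V_{t+1}(Ax+Bu+\Xi w)\big]-\lambda W_2(\gamma,\nu)^2\Big\}.$$
Because $\nu$ is the empirical measure of the atoms $\hat w^{(i)}$, I would rewrite $W_2(\gamma,\nu)^2$ as an infimum over couplings. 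The supremum over $\gamma$ and over couplings then collapses to a supremum over transport maps atom by atom. This gives
$$\frac1N\sum_{i=1}^N\sup_{w\in\R^k}\Big\{V_{t+1}(Ax+Bu+\Xi w)-\lambda\|w-\hat w^{(i)}\|^2\Big\}.$$
The upper bound follows from disintegrating any coupling along the atoms. The lower bound follows by choosing the Dirac transport to the pointwise maximizers, which is the same mechanism as the duality of Proposition~\ref{prop:duality} with $\lambda$ fixed.

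Next I would solve each inner problem explicitly. The map $w\mapsto w^\top\Xi^\top\Pi_{t+1}\Xi w-\lambda\|w\|^2+(\text{linear})$ has Hessian $2(\Xi^\top\Pi_{t+1}\Xi-\lambda I)$. This is negative definite precisely under the hypothesis $\lambda>\bar\lambda_t$, so the problem is strictly concave with a unique maximizer. The first-order condition
$$\Xi^\top\Pi_{t+1}(Ax+Bu+\Xi w)+\Xi^\top r_{t+1}-\lambda(w-\hat w^{(i)})=0$$
gives the stated support points $w_t^{*,(i)}$, with $u=u_t^*$ substituted at the end. Plugging back in, the adversarial value is a quadratic form in $y:=Ax+Bu$ of the shape
$$y^\top\big(I-\Pi_{t+1}\Xi\Xi^\top/\lambda\big)^{-1}\Pi_{t+1}\,y+(\text{linear in }y)+(\text{constant}).$$
I would check the matrix by the push-through identity $\Pi+\Pi\Xi(\lambda I-\Xi^\top\Pi\Xi)^{-1}\Xi^\top\Pi=(I-\Pi\Xi\Xi^\top/\lambda)^{-1}\Pi$. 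After averaging over $i$, the linear and constant terms involve $\bar w$ and $\Sigma$.

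The controller then minimizes $x^\top Qx+u^\top Ru+(\text{that quadratic in }Ax+Bu)$ over $u$. This is strictly convex because $R\succ0$ and the middle matrix is positive semidefinite. The first-order condition gives $u_t^*=K_tX_t+L_t$. To reach the stated forms, I would use the identity
$$\big(M^{-1}+BR^{-1}B^\top\big)^{-1}=(I+\Pi_{t+1}\Phi)^{-1}\Pi_{t+1},\qquad M^{-1}=\Pi_{t+1}^{-1}-\Xi\Xi^\top/\lambda,$$
proved by Woodbury. If $\Pi_{t+1}$ is singular, I would argue by continuity or write the identities without inverting $\Pi_{t+1}$. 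Collecting the quadratic, linear and constant parts of the resulting value then gives the recursions for $\Pi_t$, $r_t$ and $z_t$. This closes the induction, and uniqueness of the optimizers follows from strict convexity and concavity.

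The main obstacle I expect is the algebra for $z_t$. It requires carefully combining the trace term $\mathrm{tr}[(I-\Xi^\top\Pi_{t+1}\Xi/\lambda)^{-1}\Xi^\top\Pi_{t+1}\Xi\Sigma]$, which comes from the atom-wise quadratic parts, with the $\bar w$ cross terms that appear before and after minimizing over $u$. Those cross terms produce the difference of the two resolvents. A secondary point is checking $\Pi_t\in\Sc^d_+$, so that the hypothesis $\lambda>\bar\lambda_{t-1}$ is meaningful at the next step. I would obtain this by exhibiting the recursion as $Q$ plus a congruence of a positive semidefinite matrix, which is valid under the eigenvalue condition.
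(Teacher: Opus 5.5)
Your derivation is correct. The paper gives no proof of this proposition; it imports it from \cite{KimYang2021}, so there is no internal argument to compare against. Your route is the standard self-contained derivation: a quadratic ansatz propagated through the stagewise $\inf_u\sup_\gamma$ recursion, the atom-wise reduction of the penalized inner supremum over the empirical nominal, strictly concave maximization giving the support points, and completion of squares with push-through/Woodbury identities. Reading the penalty as $\lambda W_2(\gamma_s,\nu)^2$ is exactly what the stated formulas require.

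Three things to tighten.

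\begin{itemize}
\item You take for granted that the game value satisfies the stagewise recursion. Theorem~\ref{thm : DPP + P^*} is proved only on compact spaces, so you should close with a verification step. Telescoping the Bellman inequality along $u_t^*$ against an arbitrary adversary gives $\sup_\gamma \mathcal{J}(u^*,\gamma)\le V_0$. Telescoping along the explicit best response $\gamma_t^*(x,u)$ against an arbitrary feedback policy gives the reverse bound; note that $\gamma_t^*(x,u)$ attains the inner supremum for every $u$, not only $u_t^*$. Integrability in both directions comes from the quadratic growth of $V_t$ and finite second moments.
\item The concavity condition at stage $t$ is $\lambda>\bar\lambda_{t+1}$, not $\bar\lambda_t$. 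The hypothesis covers it because $t+1\ge 1$. Positive semidefiniteness of $\Pi_{t+1}$ is what you need for convexity in $u$, not for the eigenvalue condition to make sense.
\item For the $z_t$ algebra you flag, set $N:=(I-\Pi_{t+1}\Xi\Xi^\top/\lambda)^{-1}$, $K:=(I+\Pi_{t+1}\Phi)^{-1}$ and $E:=\Xi\Xi^\top/\lambda$. Then $K^{-1}-N^{-1}=\Pi_{t+1}BR^{-1}B^\top$ gives $N-K=N\Pi_{t+1}BR^{-1}B^\top K=K\Pi_{t+1}BR^{-1}B^\top N$. This identity turns the term removed by the $u$-minimization into $\bar w^\top\Xi^\top(K-N)\Pi_{t+1}\Xi\bar w$. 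It also merges the $\bar w$--$r_{t+1}$ cross terms into $2\bar w^\top\Xi^\top K r_{t+1}$. Together with $EN=N^\top E$, it reduces the $r_{t+1}$-quadratic terms to $-r_{t+1}^\top\Phi K r_{t+1}$, so the stated $z_t$ is recovered exactly.
\end{itemize}
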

 The Riccati solution provides a related conservative reference problem, not an exact benchmark for the constrained Wasserstein-ball formulation used in the
main theory.

We illustrate Algorithm \ref{alg:robust_actor_critic} on the LQ instance described above, specialized to the one-dimensional case $d = m = 1$ for visualization. The LQ example is written in cost-minimization form. It is equivalent to the
reward-maximization convention of the previous sections by setting
$$f(t,x,u,x')=-(x^\top Qx+u^\top Ru),
\quad
g(x)=-x^\top P_Tx, \quad \forall(t,x,u,x') \in \Tc \times \R^d \times \R^m \times \R^d.$$
Thus the reward value in the previous sections is the negative of the cost value
reported in this subsection. Since both the state and action spaces are continuous, we parametrize $\pi^\theta$ as a truncated Gaussian density with variance bounded away from zero, such that
with
$$
\pi_t^\theta(x,a)
=
\frac{
\exp\left(
-\frac{\|a-\mu_\theta(t,x)\|^2}{2\sigma_\theta^2}
\right)
}{
\int_A
\exp\left(
-\frac{\|b-\mu_\theta(t,x)\|^2}{2\sigma_\theta^2}
\right)\d b
},
\quad \forall a\in A.
$$
where $(t,x)\in\Tc\times\R^d$ and the mean $\mu_\theta : \Tc\times\R^d \to \R^m$ is given by a two-layer feedforward neural network (one hidden layer, ReLU activation) taking $(t,x)$ as input, and $\log\sigma_\theta\in\R$ is a learned scalar log-standard-deviation. The parameter $\theta$ thus collects both the network weights and the log-volatility, and Algorithm \ref{alg:robust_actor_critic} jointly optimizes the mean $\mu_\theta$ and the standard deviation $\sigma_\theta$. We display the mean of the learned policy, i.e., the map $x\mapsto \mu_{\theta^*}(t,x)$, where $\theta^*$ denotes the parameter returned by Algorithm \ref{alg:robust_actor_critic}. We compare its form against the continuous-adversary Riccati solution $u^*_t(x) = K_t\,x + L_t$ of Proposition \ref{prop:robust_lq}. 
\begin{figure}[H]
    \centering
    \includegraphics[width=0.6\textwidth]{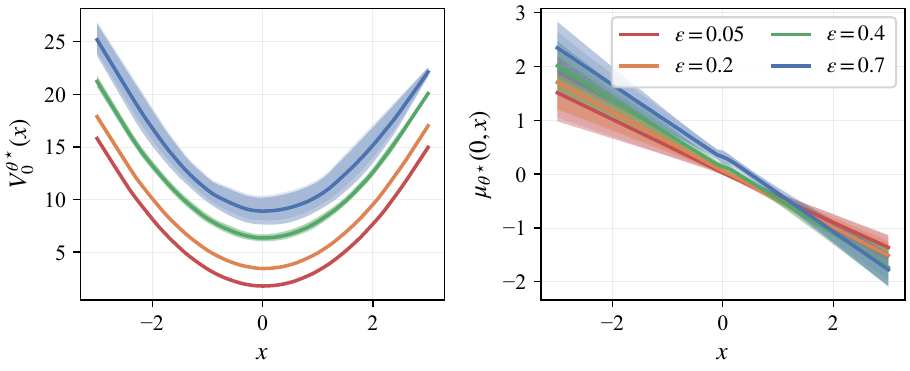}
    \caption{%
        Value function (left) and optimal policy (right)
        for four values of $\varepsilon$.
    }
    \label{fig:lq_comparison}
\end{figure}
Figure \ref{fig:lq_comparison} shows the learned policy and its corresponding value function. Since the Riccati benchmark has a quadratic value function and a linear optimal
control, the learned curves provide a structural consistency check for the
algorithm. The numerical results match this structure, and the learned stochastic policy becomes nearly deterministic, with $\sigma_{\theta^\star}\approx 0$.

\appendix

\renewcommand{\thesection}{\Alph{section}}




\section{Some useful results}
    In this section, we recall several concepts and results that will be useful for the proofs.

\begin{Definition} Let $\mathcal{X}$ be a Polish space, $\mathcal{B}(\mathcal{X})$ its Borel $\sigma$-algebra and $(\Omega,\mathcal{F})$ be a measurable space. Let $\psi : \Omega  \rightrightarrows \mathcal{X}$, i.e., for any $\omega \in \Omega$, $\psi(\omega) \subset \mathcal{X}$. We say that the set-valued map or correspondence $\psi$ is weakly-measurable if for every open subset $U \subset \mathcal{X}$, we have
\begin{align}
    \lbrace \omega \in \Omega :  \psi(\omega) \cap U \neq \emptyset \big \rbrace \in \mathcal{F}. \notag 
\end{align}
Given a correspondence $\varphi : \mathcal{X} \rightrightarrows Y $, we define its graph  as
\begin{align}
    \text{Gr}(\varphi) := \big \lbrace (x,y) \in \mathcal{X} \times Y : y \in \varphi(x) \big \rbrace \notag 
\end{align}    
\end{Definition}

\begin{Definition}
Let $\varphi : \mathcal{X} \rightrightarrows Y $ be a measurable correspondence between two topological spaces. 
\begin{enumerate}
    \item [(1)] $\varphi $ is called upper hemicontinuous  if $\lbrace x \in \mathcal{X} : \varphi(x)  \subset A \big \rbrace$ is an open set for all open sets $A \subset Y$.
    \item [(2)] $\varphi $ is called lower hemicontinuous  if $\lbrace x \in \mathcal{X} : \varphi(x)  \cap A \neq \emptyset \big \rbrace$ is an open set for all open sets $A \subset Y$.
    \item [(3)] We say that $\varphi$ is continuous if $\varphi$ is upper and lower hemicontinuous.
\end{enumerate}
    
\end{Definition}

\begin{Theorem}
\label{def : danskin_Theorem}
Let $C$ be a compact metric space and let $U$ be a subset of a normed vector
space. Let $f:C\times U\to\mathbb R$ be continuous, and define
$$
v(u):=\sup_{x\in C} f(x,u),~~\text{and}~~
S(u):=\argmax_{x\in C} f(x,u).
$$
Fix $u_0\in U$ and a feasible direction $d$. Suppose that, for every $x\in C$,
the directional derivative
$$
f'_x(u_0,d)
:=
\lim_{t\downarrow0}
\frac{f(x,u_0+td)-f(x,u_0)}{t}
$$
exists. Suppose moreover that, for every sequence $t_n\downarrow0$ and every
sequence $x_n\in C$ with $x_n\to\bar x$,
$$
\limsup_{n\to\infty}
\frac{
f(x_n,u_0+t_n d)-f(x_n,u_0)
}{t_n}
\leq
f'_{\bar x}(u_0,d).
$$
Then $v$ admits a right directional derivative at $u_0$ in the direction $d$ and
$$
v'(u_0,d)
=
\sup_{x\in S(u_0)} f'_x(u_0,d).
$$
\end{Theorem}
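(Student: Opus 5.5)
The plan is the classical two-sided squeeze. A lower bound on the $\liminf$ of the difference quotient of $v$ comes from freezing a maximizer at $u_0$. An upper bound on the $\limsup$ comes from following maximizers at the perturbed points and using compactness of $C$ together with the assumed upper-semicontinuity-type estimate. Throughout, $t>0$ is small enough that $u_0+td\in U$, which feasibility of $d$ provides. First I note that $S(u)$ is nonempty and compact for every such $u$, since $f(\cdot,u)$ is continuous on the compact metric space $C$.

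\emph{Lower bound.} Fix $x\in S(u_0)$. Then $v(u_0)=f(x,u_0)$ and $v(u_0+td)\ge f(x,u_0+td)$, so
$$
\frac{v(u_0+td)-v(u_0)}{t}\ \ge\ \frac{f(x,u_0+td)-f(x,u_0)}{t}.
$$
Letting $t\downarrow0$ and using existence of $f'_x(u_0,d)$ gives $\liminf_{t\downarrow0}\frac{v(u_0+td)-v(u_0)}{t}\ge f'_x(u_0,d)$. Taking the supremum over $x\in S(u_0)$ yields $\liminf\ge\sup_{x\in S(u_0)}f'_x(u_0,d)$.

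\emph{Upper bound.} Choose $t_n\downarrow0$ realizing the $\limsup$ of the quotient, and pick $x_n\in S(u_0+t_nd)$. Since $v(u_0)\ge f(x_n,u_0)$, we get
$$
\frac{v(u_0+t_nd)-v(u_0)}{t_n}\ \le\ \frac{f(x_n,u_0+t_nd)-f(x_n,u_0)}{t_n}.
$$
By compactness of $C$, after passing to a subsequence (which does not change the limit of the left-hand side) we have $x_n\to\bar x\in C$.

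The one genuinely delicate step is to show that $\bar x\in S(u_0)$. For any $x\in C$ we have $f(x_n,u_0+t_nd)\ge f(x,u_0+t_nd)$. Since $(x_n,u_0+t_nd)\to(\bar x,u_0)$ and $f$ is jointly continuous, passing to the limit gives $f(\bar x,u_0)\ge f(x,u_0)$. Hence $\bar x$ is a maximizer.

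The hypothesis of the theorem, applied along this subsequence, then gives
$$
\limsup_{t\downarrow0}\frac{v(u_0+td)-v(u_0)}{t}\ \le\ f'_{\bar x}(u_0,d)\ \le\ \sup_{x\in S(u_0)}f'_x(u_0,d).
$$

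\emph{Conclusion.} Combining the two bounds, the $\liminf$ and the $\limsup$ coincide, so the limit exists. This shows that $v$ has a right directional derivative at $u_0$ in direction $d$, equal to $\sup_{x\in S(u_0)}f'_x(u_0,d)$; the upper bound also shows this supremum is finite and is attained at $\bar x$. I expect no real obstacle beyond checking that the limit point of the perturbed maximizers lies in $S(u_0)$. That is exactly where joint continuity of $f$ and compactness of $C$ are used. The hypothesis on $\limsup$ along converging sequences is what replaces the usual uniform-differentiability assumption in Danskin's theorem.
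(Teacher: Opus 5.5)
Your proof is correct and complete. The lower bound comes from freezing a maximizer at $u_0$. The upper bound comes from following maximizers $x_n\in S(u_0+t_nd)$, extracting $x_n\to\bar x$, using joint continuity to get $\bar x\in S(u_0)$, and then applying the stated $\limsup$ hypothesis; this is the standard Danskin squeeze.

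The paper gives no argument of its own and only cites Bonnans--Shapiro (Prop.~4.12) and Bertsekas (Prop.~B.22). Those results run the same squeeze under smoothness or convexity assumptions, which serve to produce the $\limsup$ estimate you take as a hypothesis. Your version works directly from the hypotheses as stated, so it is self-contained for this formulation.
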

\begin{proof}
    See Proposition 4.12 \cite{Bonnans1998} and Proposition B.22 in \cite{Bertsekas1999}.
\end{proof}

\begin{Theorem}\label{thm : Continuity under weak convergence} Let $X$ be a complete metric space and $A$ a compact metric space. Let $h : \Xc \times A \ni (x,a) \mapsto h(x,a) \in \R$ be a continuous mapping and $\Xc \ni x \mapsto \mu_x \in \Pc(A)$ be a weakly continuous mapping, i.e.,
\begin{align}
    d(x_n,x) \to 0 \Rightarrow  \mu_{x_n} \text{ weakly converges to } \mu_x.
\end{align}
Then, the mapping 
\begin{align}
    F : \Xc \ni x \mapsto \int_{A} h(x,a) \d \mu_x(a),
\end{align}
is continuous.

\end{Theorem}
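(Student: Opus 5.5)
The statement to prove is Theorem~\ref{thm : Continuity under weak convergence}: if $h$ is jointly continuous on $\Xc\times A$ with $A$ compact, and $x\mapsto\mu_x$ is weakly continuous, then $F(x)=\int_A h(x,a)\,\d\mu_x(a)$ is continuous.

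I will prove sequential continuity. Fix $x\in\Xc$ and a sequence $x_n\to x$. Then I split the difference in the standard way:
$$
|F(x_n)-F(x)|
\le
\int_A |h(x_n,a)-h(x,a)|\,\d\mu_{x_n}(a)
+
\Big|\int_A h(x,a)\,\d\mu_{x_n}(a)-\int_A h(x,a)\,\d\mu_x(a)\Big|.
$$
The second term tends to zero directly by weak convergence $\mu_{x_n}\rightharpoonup\mu_x$. This applies because $a\mapsto h(x,a)$ is continuous on the compact set $A$, hence bounded and continuous, so it is an admissible test function.

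For the first term, it suffices to show that $\sup_{a\in A}|h(x_n,a)-h(x,a)|\to0$. The integrand is then bounded by this supremum, and each $\mu_{x_n}$ is a probability measure.

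The only genuine step is this uniform convergence; it is where compactness of $A$ is needed. I will argue by contradiction. Suppose there are $\eta>0$, a subsequence (not relabeled), and points $a_n\in A$ with $|h(x_n,a_n)-h(x,a_n)|\ge\eta$. By compactness of $A$, pass to a further subsequence with $a_n\to\bar a$. Then $(x_n,a_n)\to(x,\bar a)$ and $(x,a_n)\to(x,\bar a)$ in the product metric. Joint continuity of $h$ gives $h(x_n,a_n)-h(x,a_n)\to h(x,\bar a)-h(x,\bar a)=0$, a contradiction.

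Alternatively, one can apply uniform continuity of $h$ on the compact set $K\times A$, where $K=\{x\}\cup\{x_n:n\in\N\}$ is compact. Either route closes the argument, giving $F(x_n)\to F(x)$ for every sequence $x_n\to x$. Since $\Xc$ is metric, sequential continuity is equivalent to continuity, and $F$ is continuous. Completeness of $\Xc$ is not used.
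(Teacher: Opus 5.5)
Your proof is correct. The paper gives no argument of its own here: it only cites Theorem~2.1 of Billingsley, which is the Portmanteau theorem. The Portmanteau theorem controls $\int_A \varphi\,\mathrm{d}\mu_{x_n}$ only for a \emph{fixed} bounded continuous test function $\varphi$, so on its own it covers just your second term.

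The dependence of the integrand on $x$ requires an extra step. That step is your uniform-convergence argument $\sup_{a\in A}|h(x_n,a)-h(x,a)|\to 0$, which you obtain from compactness of $A$ (or equivalently from uniform continuity of $h$ on the compact set $\{x\}\cup\{x_n\}\times A$). The citation leaves this step implicit.

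So your route is a self-contained proof of what the reference is meant to supply, and it makes explicit where compactness of $A$ enters. You are also right that completeness of $\mathcal X$ plays no role: any metric space suffices, since sequential continuity characterizes continuity there.
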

\begin{proof}
    See Theorem 2.1 \cite{billingsley_convergence_1999}.
\end{proof}

\begin{Lemma}\label{Lemma : berge_maximum_Theorem} Let $\varphi : \mathcal{X} \rightrightarrows \mathcal{Y}$ be a upper and lower hemicontinuous correspondence between topological spaces with nonempty compact values, and suppose that  $f : \lbrace (x,y) \in \mathcal{X} \times \mathcal{Y} : y \in \varphi(x) \big \rbrace \to \mathbb{R}$ is continuous. Then the map $m : \mathcal{X} \ni x \mapsto  \underset{y \in \varphi(x)}{\sup} f(x,y)$ is continuous.

\end{Lemma}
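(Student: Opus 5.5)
This is Berge's maximum theorem. The plan is to prove continuity of $m$ at an arbitrary point $x_0$ by establishing lower and upper semicontinuity separately. Upper semicontinuity will use upper hemicontinuity of $\varphi$ together with compactness of $\varphi(x_0)$. Lower semicontinuity will use lower hemicontinuity of $\varphi$. In each case the continuity of $f$ on the graph $\mathrm{Gr}(\varphi)$ is what lets us control values near the relevant points.

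\emph{Lower semicontinuity.} Fix $x_0$ and $\eta>0$. Since $\varphi(x_0)$ is nonempty and compact and $f(x_0,\cdot)$ is continuous on it, the supremum is attained at some $y_0\in\varphi(x_0)$. By continuity of $f$ at $(x_0,y_0)$ on the graph, there are open sets $U_0\ni x_0$ and $W\ni y_0$ such that $f(x,y)>m(x_0)-\eta$ for all $(x,y)\in\mathrm{Gr}(\varphi)\cap(U_0\times W)$. Lower hemicontinuity then makes $\{x:\varphi(x)\cap W\neq\emptyset\}$ an open neighbourhood of $x_0$. On its intersection with $U_0$, pick $y\in\varphi(x)\cap W$; this gives $m(x)\ge f(x,y)>m(x_0)-\eta$.

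\emph{Upper semicontinuity.} This is the step where the real work lies, because it needs a compactness covering argument on the graph. Fix $\eta>0$. For each $y\in\varphi(x_0)$, continuity of $f$ on the graph yields open sets $U_y\ni x_0$ and $W_y\ni y$ with $f<f(x_0,y)+\eta\le m(x_0)+\eta$ on $\mathrm{Gr}(\varphi)\cap(U_y\times W_y)$. Compactness of $\varphi(x_0)$ lets us extract finitely many $y_1,\dots,y_k$ whose $W_{y_i}$ cover it. Set $W=\bigcup_i W_{y_i}$ and $U=\bigcap_i U_{y_i}$. Upper hemicontinuity makes $\{x:\varphi(x)\subset W\}$ an open neighbourhood of $x_0$. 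For $x$ in its intersection with $U$, any $y\in\varphi(x)$ lies in some $W_{y_i}$ while $x\in U_{y_i}$, so $f(x,y)<m(x_0)+\eta$. Taking the supremum over $y\in\varphi(x)$ gives $m(x)\le m(x_0)+\eta$.

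Combining the two semicontinuity estimates gives $|m(x)-m(x_0)|\le\eta$ on a neighbourhood of $x_0$, which proves continuity. The only subtlety is that $f$ is defined and continuous only on the graph, not on all of $\mathcal X\times\mathcal Y$. This is why every neighbourhood estimate above is stated relative to $\mathrm{Gr}(\varphi)$ via the product-of-open-sets basis, rather than through any extension of $f$.
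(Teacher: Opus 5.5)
Your proof is correct and takes essentially the same approach as the paper, whose proof simply cites \cite[Theorem~17.31]{AliprantisBorder2006}; that reference proves the result by the same split into lower semicontinuity (one near-maximizer plus lower hemicontinuity) and upper semicontinuity (a finite subcover of $\varphi(x_0)$ by graph-relative product neighbourhoods plus upper hemicontinuity). Working relative to $\mathrm{Gr}(\varphi)$ through basic open rectangles, as you do, is exactly what is needed because $f$ is only defined on the graph.
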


\begin{proof}
    The proof of this result can be found in Theorem 17.31 in \cite{AliprantisBorder2006}.
\end{proof}

\begin{Lemma}\label{Lemma : measurable_maximum_Theorem}Let $\mathcal{X}$ be a Polish space and let $\varphi : S \rightrightarrows \mathcal{X}$ be a weakly measurable correspondence with non empty compact values and  suppose $f : S \times \mathcal{X} \to \mathbb{R}$ is a Carathéodory map, i.e, a map satisfying the following assumptions
\begin{enumerate}
    \item [(1)] For any $x \in \mathcal{X}$, the map $f(\cdot,x) $ is $\Sigma$-measurable
    \item [(2)] For any $s \in S$, the map $f(s,\cdot)$ is continuous.
\end{enumerate}
Define the map $m: S \to \mathbb{R}$ 
\begin{align}
    S \ni s \mapsto m(s) := \underset{x \in \varphi(s)}{\inf} f(s,x), \notag 
\end{align}
and the correspondence of minimizers by the map $\mu :$
\begin{align}
    S \ni s \mapsto  \mu(s) := \big \lbrace  x \in \varphi(s) : f(s,x) = m(s) \big \rbrace. \notag 
\end{align}
Then, the following holds
\begin{enumerate}
    \item [(1)] The value  function $m$ is measurable
    \item [(2)] The argmin correspondence $\mu$ has nonempty and compact values
    \item [(3)] The argmin correspondence $\mu$ is weakly measurable and admits a measurable selector
\end{enumerate}

\begin{proof}
    The proof of this result can be found in Theorem 18.19 in \cite{AliprantisBorder2006}.
\end{proof}

\end{Lemma}

\begin{Theorem}\label{thm : measurable_selection} Let $X$ be a Polish space, $\mathcal{B}(X)$  its Borel $\sigma-$algebra and $(\Omega,\mathcal{F})$ be a measurable space. Let $\psi$ be a weakly-measurable correspondence in $\Omega$ taking values in the set of nonempty closed subsets of $X$. Then $\psi$ admits a measurable selection, i.e a $(\mathcal{B}(X),\mathcal{F})$ measurable map.
\end{Theorem}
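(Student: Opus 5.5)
This is the Kuratowski--Ryll-Nardzewski selection theorem. I plan to prove it by building measurable approximate selectors that take countably many values, forming a uniformly Cauchy sequence that converges into $\psi(\omega)$. Fix a complete metric $d$ compatible with the topology of $X$ and a countable dense set $\{d_k\}_{k\ge1}\subset X$. Write $B(z,r)$ for the open ball.

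The basic observation is the following. For any open set $U\subset X$, weak measurability gives $\{\omega:\psi(\omega)\cap U\neq\emptyset\}\in\mathcal F$. Since finite intersections of open balls are open, the same holds for every set of the form $B(d_k,r)\cap B(d_j,s)$.

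\textbf{Step 0.} Define $f_0(\omega):=d_{k}$, where $k$ is the least index with $\psi(\omega)\cap B(d_k,1)\neq\emptyset$. Such an index exists because $\psi(\omega)\neq\emptyset$ and $\{d_k\}$ is dense. The event $\{f_0=d_k\}$ is obtained from the measurable events $\{\psi\cap B(d_i,1)\neq\emptyset\}$, $i\le k$, by finitely many complements and intersections. So $f_0$ is measurable and countably valued, and $\psi(\omega)\cap B(f_0(\omega),1)\ne\emptyset$.

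\textbf{Inductive step.} Suppose $f_n$ is measurable, countably valued in $\{d_k\}$, and satisfies $\psi(\omega)\cap B(f_n(\omega),2^{-n})\neq\emptyset$. Define $f_{n+1}(\omega):=d_k$, where $k$ is the least index with
\[
\psi(\omega)\cap B(d_k,2^{-(n+1)})\cap B(f_n(\omega),2^{-n})\neq\emptyset .
\]
Such a $k$ exists: pick $z\in\psi(\omega)\cap B(f_n(\omega),2^{-n})$, then any $d_k$ close enough to $z$ works. On each measurable piece $\{f_n=d_j\}$ the condition reads $\psi(\omega)\cap\big(B(d_k,2^{-(n+1)})\cap B(d_j,2^{-n})\big)\ne\emptyset$, with a fixed open set. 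This event is measurable by the observation above. Taking the countable union over $j$ shows $f_{n+1}$ is measurable and countably valued. By construction the inductive hypothesis holds at $n+1$, and $d(f_n,f_{n+1})<2^{-n}+2^{-(n+1)}$.

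\textbf{Conclusion.} The sequence $(f_n(\omega))$ is uniformly Cauchy, since $\sum_n 3\cdot2^{-(n+1)}<\infty$. By completeness it converges to some $f(\omega)$, and $f$ is measurable as a pointwise limit of measurable maps into a metric space. Since $d(f_n(\omega),\psi(\omega))<2^{-n}$, we get $d(f(\omega),\psi(\omega))=0$, so closedness of $\psi(\omega)$ gives $f(\omega)\in\psi(\omega)$.

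The main obstacle is the measurability in the inductive step. The ball $B(f_n(\omega),2^{-n})$ depends on $\omega$, so weak measurability does not apply to it directly. Keeping each $f_n$ countably valued on the dense set is what resolves this: it reduces the condition on each piece to a fixed open set, where weak measurability applies.
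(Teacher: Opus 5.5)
Your proof is correct. It is the standard successive-approximation proof of the Kuratowski--Ryll-Nardzewski theorem, which is exactly what the paper relies on: the paper gives no argument of its own and only cites \cite{kuratowski_general_1965}. Handling the $\omega$-dependent ball by keeping each $f_n$ countably valued on the dense set is the key step of that argument, and you carry it out correctly.
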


\begin{proof}
    The proof of this result can be found in \cite{kuratowski_general_1965}
\end{proof}

\section{Proofs of additional results}
    This section provides additional proofs of the results stated above.
\subsection{Proof of the interchange of minimization and integration}
\begin{Theorem}\label{thm : Interchange of Min and Int}
Let $(\Omega,\Fc,\mu)$ be a $\sigma$-finite measurable space. Let $X$ be a Polish space. Let $h:\Omega\times X\rightarrow \R$ be a normal integrand, which means that $h$ satisfies the following conditions:
\begin{itemize}
\item [(i)] $h:\Omega\times X\rightarrow\R$ is $\Fc\otimes\Bc(X)$-measurable;
    \item[(ii)] for every $a\in \Omega$, $h(a,\cdot)$ is lower semi-continuous. 
\end{itemize}
Let $\Xc=\{x: \Omega\rightarrow X\,|\,x \textit{ is a measurable function such that $h(\cdot,x(\cdot))$ is integrable}\}$. Assume further that for every $a\in \Omega$, $\inf_{x\in X}h(a,x)$ is finite. Then, if $\int_{\Omega}h(a,x(a))\mu(\d a)\not\equiv\infty$ on $\Xc$, one has
\begin{align}
\inf_{x\in\Xc}\int_{\Omega}h(a,x(a))\mu(\d a)=\int_{\Omega}\inf_{x\in \Xc}h(a,x(a))\mu(\d a).
\end{align}
\end{Theorem}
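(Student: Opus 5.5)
The plan is to reduce the interchange to a measurable-selection argument. The inequality ``$\ge$'' is pointwise; the inequality ``$\le$'' comes from selecting near-minimizers measurably. Write $m(a):=\inf_{x\in X}h(a,x)$, which is finite by assumption. Note that the inner infimum on the right-hand side, $\inf_{x\in\Xc}h(a,x(a))$, coincides with $m(a)$: constant maps $x(\cdot)\equiv \xi$ are measurable, so the infimum over functions evaluated at $a$ equals the infimum over points $\xi\in X$. We therefore prove
\[
\inf_{x\in\Xc}\int_\Omega h(a,x(a))\,\mu(\d a)=\int_\Omega m(a)\,\mu(\d a).
\]

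\textbf{First step: measurability of $m$ and the easy inequality.} Since $h$ is $\Fc\otimes\Bc(X)$-measurable and lower semicontinuous in $x$, i.e.\ a normal integrand, we would invoke the standard result that the epigraphical mapping $a\mapsto\{(x,\alpha):h(a,x)\le\alpha\}$ is a closed-valued measurable correspondence. Taking a Castaing representation $(x_n,\alpha_n)$ of it gives $m(a)=\inf_n \alpha_n(a)$ on the effective domain, so $m$ is measurable. This is the one place where completeness of $\Fc$, or its replacement by the $\mu$-completion, may be needed; I would state that we work with the completion. Then for any $x\in\Xc$ we have $h(a,x(a))\ge m(a)$ pointwise. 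Integrating yields $\int h(a,x(a))\,\d\mu\ge \int m\,\d\mu$, with the convention that the integral of $m$ may be $-\infty$.

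\textbf{Second step: reverse inequality via $\eta$-optimal selectors.} Fix $\eta>0$ and a strictly positive integrable weight $w$, which exists by $\sigma$-finiteness. Consider the correspondence
\[
\psi_\eta(a):=\overline{\{x\in X: h(a,x)< m(a)+\eta w(a)\}}.
\]
It is nonempty because $m(a)$ is finite, and it is weakly measurable because the strict sublevel set is the projection of a measurable graph, handled via the Castaing representation again. Theorem~\ref{thm : measurable_selection} then provides a measurable selector. The closure is harmless: instead of taking closures I would take a selector directly from the Castaing sequence, setting $x_\eta(a):=x_{n(a)}(a)$ with $n(a)$ the first index satisfying $h(a,x_n(a))<m(a)+\eta w(a)$. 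This is measurable as a countable patching and avoids closure issues with lower semicontinuity.

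\textbf{Third step: conclude.} To make $x_\eta$ admissible, we need $h(\cdot,x_\eta(\cdot))$ integrable. When $\int m\,\d\mu>-\infty$ and there is some $\bar x\in\Xc$ (the hypothesis $\not\equiv\infty$), patch as follows: set $\tilde x:=x_\eta$ on $\{h(a,x_\eta(a))\le h(a,\bar x(a))\}$ and $\tilde x:=\bar x$ elsewhere. Then $m\le h(\cdot,\tilde x)\le \min\{h(\cdot,\bar x),\,m+\eta w\}$, so $h(\cdot,\tilde x)$ is integrable and $\int h(\cdot,\tilde x)\,\d\mu\le\int m\,\d\mu+\eta\int w\,\d\mu$. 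Letting $\eta\downarrow0$ gives the result. If $\int m\,\d\mu=-\infty$, truncate instead: replace $m+\eta w$ by $\max(m,-k)+\eta w$ and let $k\to\infty$, which drives the left-hand side to $-\infty$.

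The main obstacle is the measurability in the first two steps, namely the measurability of $m$ and the existence of a measurable near-minimizing selector for a merely normal integrand. I would cite Rockafellar--Wets (Theorem~14.37 and Theorem~14.60) for this rather than reprove it.
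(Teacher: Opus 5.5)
The paper gives no argument of its own for this statement; it only cites Rockafellar--Wets, Theorem~14.60. Your proof (a measurable $\eta$-argmin selection from a Castaing representation, then patching against a reference $\bar x\in\mathcal{X}$) is the standard argument behind that theorem. Your treatment of the case $\int_\Omega m\,\mathrm{d}\mu>-\infty$ is correct, since there $m$ is integrable and the sandwich $m\le h(\cdot,\tilde x)\le\min\{h(\cdot,\bar x),m+\eta w\}$ gives integrability.

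\textbf{The gap.} The case $\int_\Omega m\,\mathrm{d}\mu=-\infty$ fails as sketched. Replacing $m+\eta w$ by $\max(m,-k)+\eta w$ only bounds $h(\cdot,x_k(\cdot))$ from above. The lower bound is still just $m$, whose negative part is now non-integrable. So the patched map need not lie in $\mathcal{X}$, and it then says nothing about $\inf_{x\in\mathcal{X}}$. A concrete failure:
take $\Omega=(0,1]$ with Lebesgue measure, $X=\mathbb{R}$, $h(a,\xi)=\max\{\xi,-1/a\}$ and $\bar x\equiv 0$.
The map $x_k(a)=-1/a-1$ satisfies your truncated criterion $h(a,x_k(a))<\max\{m(a),-k\}+\eta w(a)$ for every $k$. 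Patching with $\bar x$ leaves it unchanged, and $h(\cdot,x_k(\cdot))=-1/a$ is not integrable.

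\textbf{The fix.} The missing ingredient is to localize the improvement to finite-measure sets on which the integrand is bounded below. You have to do this by hand: $\mathcal{X}$ is not decomposable in the Rockafellar--Wets sense, because a bounded patch on a finite-measure set need not keep $h(\cdot,x(\cdot))$ integrable. Take $\Omega_k\uparrow\Omega$ with $\mu(\Omega_k)<\infty$ and set $D_k:=\Omega_k\cap\{m\ge -k\}$. Let $\tilde x_k$ be the better of $x_\eta$ and $\bar x$ on $D_k$, and $\bar x$ off $D_k$. Then on $D_k$ you have $-k\le m\le h(\cdot,\tilde x_k)\le\min\{h(\cdot,\bar x),m+\eta w\}$, so $\tilde x_k\in\mathcal{X}$, and
\[
\int_\Omega h(a,\tilde x_k(a))\,\mu(\mathrm{d}a)\le\int_{D_k}m\,\mathrm{d}\mu+\eta\int_\Omega w\,\mathrm{d}\mu+\int_{\Omega\setminus D_k}h(a,\bar x(a))\,\mu(\mathrm{d}a)\xrightarrow[k\to\infty]{}\int_\Omega m\,\mathrm{d}\mu+\eta\int_\Omega w\,\mathrm{d}\mu .
\]
The limit uses monotone convergence for $m^-$ and dominated convergence for $m^+$ and $h(\cdot,\bar x)$. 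This handles both cases at once.

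\textbf{Two smaller points.}
First, your identity $\inf_{x\in\mathcal{X}}h(a,x(a))=m(a)$ does not follow from measurability of constant maps, because constants need not belong to $\mathcal{X}$ (e.g.\ $h(a,\xi)=|\xi|$ on $\Omega=\mathbb{R}$ with Lebesgue measure). The same localization repairs it: put $\xi$ on $\{a'\in\Omega_n:|h(a',\xi)|\le k\}\ni a$ and $\bar x$ elsewhere.
Second, completeness is needed for the Castaing representation in your second step as well, not only for the measurability of $m$. Your selectors are therefore only measurable for the $\mu$-completion. You must replace $\tilde x_k$ by an $\mathcal{F}$-measurable map equal to it $\mu$-a.e.; this is possible since $X$ is Polish, and it does not change the integral.
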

\begin{proof}
See Theorem $14.60$ in \cite{RockafellarWets1998}.
\end{proof}

\subsection{Proofs of the continuity of the value function}\label{app: proof_cont}

\begin{proof}[Proof of Proposition \ref{eq:uniform-cont}]
We argue by backward induction. The claim holds for $t=T$ since
$V_T^\theta=g\in \mathcal{C}(\mathcal{X};\mathbb{R})$.
Assume $V_{t+1}^\theta\in \mathcal{C}(\mathcal{X};\mathbb{R})$. Fix
$\lambda\ge 0$, $X\sim \P_t^0(x,a,\cdot)$ and $\theta\in\Theta$, and recall that
$$
F_t^\theta(\lambda;x,a)
:=
\E_{X\sim\P_t^0(x,a,\cdot)}
\big[-\Fc^{\lambda}(-f(t,x,a,\cdot)-V^\theta_{t+1}(\cdot))(X)\big]
-\varepsilon^q\lambda,
\quad \forall (t,x,a)\in\Tc\times\Xc\times A,
$$
 Here some quantities depend on $(x,a)$, but we omit this dependence to alleviate
the notation. By Lemma \ref{lem:compact_dual_maximizer_set}, there exists a
finite constant $\Lambda\geq0$, independent of $(t,\theta,x,a)$, such that
$$
\sup_{\lambda\geq0}F_t^\theta(\lambda;x,a)
=
\sup_{\lambda\in[0,\Lambda]}F_t^\theta(\lambda;x,a).
$$
 Because $\Xc$ is compact and $c$ is continuous, $c$ is bounded. For any fixed $\lambda,\lambda'\ge 0$, we have the Lipschitz bound
\begin{equation}
\label{eq:Phi-lambda-Lip}
\|\Fc^{\lambda}(-f(t,x,a,\cdot)-V_{t+1}^\theta(\cdot))-\Fc^{\lambda'}(-f(t,x,a,\cdot)-V_{t+1}^\theta(\cdot))\|_\infty
\le
\|c\|_\infty\,|\lambda-\lambda'|.
\end{equation}
Indeed, for each $x'\in \Xc$, because $f(t,x,a,\cdot)$, $V^\theta_{t+1}$, and $c(x',\cdot)$ are continuous on the compact $\Xc$, $$Y^*_{t,\theta,\lambda}(x,a;x')
:=
\argmin_{y\in\Xc}\big\{f(t,x,a,y)+V^\theta_{t+1}(y)+\lambda c(x',y)\big\}$$ is non-empty. Pick any $y_\lambda(x,a;x')\in Y^*_{t,\theta,\lambda}(x,a;x')$ and $y_{\lambda'}(x,a;x')\in Y^*_{t,\theta,\lambda'}(x,a;x')$. Since $y_\lambda(x,a;x')$ is optimal at $\lambda$ and $y_{\lambda'}(x,a;x')$ is feasible, and vice versa,
$$
-\Fc^{\lambda}(-f(t,x,a,\cdot)-V^\theta_{t+1}(\cdot))(x') - (-\Fc^{\lambda'}(-f(t,x,a,\cdot)-V^\theta_{t+1}(\cdot))(x'))
\le (\lambda-\lambda')\,c(x',y_{\lambda'}(x,a;x')),
$$
and symmetrically; taking absolute values yields
$$\big|-\Fc^{\lambda}(-f(t,x,a,\cdot)-V^\theta_{t+1}(\cdot))(x')+\Fc^{\lambda'}(-f(t,x,a,\cdot)-V^\theta_{t+1}(\cdot))(x')\big|\le \|c\|_\infty\,|\lambda-\lambda'|,$$ which is the Lipschitz bound \eqref{eq:Phi-lambda-Lip} uniformly in $(x,a, x')$.
This yields continuity of $\lambda\mapsto -\Fc^{\lambda}(-f(t,x,a,\cdot)-V^\theta_{t+1}(\cdot))(x')$ uniformly in $x'$, and therefore continuity of
$\lambda\mapsto F_t^\theta(\lambda;x,a)$. By Lemma \ref{lem:compact_dual_maximizer_set},
$$
\Lambda_t^{*,\theta}(x,a)
:=
\argmax_{\lambda\ge 0}
\left\{
\E_{X\sim \P_t^0(x,a,\cdot)}
\big[
-\Fc^\lambda(-f(t,x,a,\cdot)-V_{t+1}^\theta(\cdot))(X)
\big]
-\varepsilon^q\lambda
\right\}
\subset[0,\Lambda],
$$
and $\Lambda_t^{*,\theta}(x,a)$ is nonempty and compact.

 For fixed $\lambda\in[0,\Lambda]$, the map
$b\mapsto \Fc^\lambda(b)$ is $1$-Lipschitz for $\|\cdot\|_\infty$:
$$
\begin{aligned}
\big|\Fc^\lambda(b)(x')-\Fc^\lambda(b')(x')\big|
&=
\big|\sup_{y\in\Xc}\{b(y)-\lambda c(x',y)\}
-\sup_{y\in\Xc}\{b'(y)-\lambda c(x',y)\}\big|
\\
&\leq \sup_{y\in\Xc}|b(y)-b'(y)|
=
\|b-b'\|_\infty,
\end{aligned}
$$
for $b,b'\in \mathcal{C}(\mathcal{X};\mathbb{R})$ and any $x'\in\Xc$.
Moreover, since $c$ is continuous on the compact set
$\Xc\times\Xc$ and $f(t,\cdot,\cdot,\cdot)$,
$V_{t+1}^\theta$ are continuous, Berge's maximum theorem
\cite[Theorem~17.31]{AliprantisBorder2006} applied to
$$
(x,a,x')\mapsto
\sup_{y\in\Xc}
\{-f(t,x,a,y)-V_{t+1}^\theta(y)-\lambda c(x',y)\}
$$
yields continuity of
$$
(x,a,x')\mapsto
\Fc^\lambda(-f(t,x,a,\cdot)-V_{t+1}^\theta(\cdot))(x')
$$
on $\Xc\times A\times\Xc$.

By the weak continuity of the nominal transition kernel
$(x,a)\mapsto\P_t^0(x,a,\cdot)$, it follows that
$$
(x,a)\mapsto
\E_{X\sim \P_t^0(x,a,\cdot)}
\big[
-\Fc^\lambda(-f(t,x,a,\cdot)-V_{t+1}^\theta(\cdot))(X)
\big]
$$
is continuous on $\Xc\times A$. The same argument, combined with
\eqref{eq:Phi-lambda-Lip}, shows that
$$
(\lambda,x,a)\mapsto
\E_{X\sim \P_t^0(x,a,\cdot)}
\big[
-\Fc^\lambda(-f(t,x,a,\cdot)-V_{t+1}^\theta(\cdot))(X)
\big]
-\varepsilon^q\lambda
$$
is jointly continuous on $[0,\Lambda]\times\Xc\times A$.

A second application of Berge's maximum theorem
\cite[Theorem~17.31]{AliprantisBorder2006}, now over the compact set
$[0,\Lambda]$, yields continuity of
$$
(x,a)\mapsto
\sup_{\lambda\in[0,\Lambda]}
\left(
\E_{X\sim \P_t^0(x,a,\cdot)}
\big[
-\Fc^\lambda(-f(t,x,a,\cdot)-V_{t+1}^\theta(\cdot))(X)
\big]
-\varepsilon^q\lambda
\right).
$$
By Lemma \ref{lem:compact_dual_maximizer_set}, the supremum over
$\mathbb R_+$ coincides with the supremum over $[0,\Lambda]$. Consequently,
$(x,a)\mapsto G_t^\theta(x,a)$ is continuous and bounded on $\Xc\times A$.

Finally, by the weak continuity of the policy kernel
$x\mapsto\pi_t^\theta(x,\cdot)$, the map
$$
x\mapsto
\int_A G_t^\theta(x,a)\,\pi_t^\theta(x,\d a)
$$
is continuous on $\Xc$. Thus, from the DPP in Remark \ref{G_dpp},
$V_t^\theta\in \mathcal{C}(\mathcal{X};\mathbb{R})$.
\end{proof}

\subsection{Proofs of directional differentiability}\label{app: proof_diff}
\begin{proof}[Proof of Proposition \ref{eq:uniform-dir-diffuniform-bound-dq}]
Fix $\theta\in\Theta$ and a feasible direction $r\in\mathbb R^{d_\theta}$. We prove the claim by backward induction on $t\in\bar{\mathcal T}$. For $t=T$, $V_T^\theta=g$ is independent of $\theta$. Hence, for all admissible one-sided $h$, $$ \frac{V_T^{\theta+hr}(x)-V_T^\theta(x)}{h}=0,\quad \forall x\in\mathcal X. $$ Thus $D_\theta V_T^\theta(\cdot)[r]=0$, the difference quotients converge uniformly, and the uniform boundedness claim is immediate. Define for any $t \in \mathcal{T}$ the map
\begin{align}
    \mathcal{X} \times A \ni (x,a) \mapsto G_{t}^{\theta}(x,a) &:=\underset{\lambda \geq 0}{\text{sup }}  \Big( \int_{\mathcal{X}} - \mathcal{F}^{\lambda}(- f(t,x,a,\cdot) - V_{t+1}^{\theta})(x') \mathbb{P}_t^0(x,a, \ud x') - \varepsilon^q \lambda \Big) .
\end{align}
From the proof of Proposition \ref{eq:uniform-cont}, we know that $G_{t}^{\theta}$ is a  continuous bounded map on $\mathcal{X} \times A$.
\begin{equation*}
\begin{split}
    \frac{V_t^{\theta + rh}(x) - V_t^{\theta}(x)}{h} 
    &= \frac{1}{h}  \Big(\int_{A} G_t^{\theta + hr}(x,a) \pi_{t}^{\theta + hr}(x, \ud a)   
    - \int_{A} G_t^{\theta}(x,a) \pi_t^{\theta}(x,\ud a) \Big) \\
    &= \int_{A} \frac{G_t^{\theta + hr}(x,a)- G_t^{\theta}(x,a)}{h} \pi_t^{\theta} (x,\ud a) + \int_{A} G_t^{\theta + hr}(x,a) 
    \frac{ \pi_t^{\theta + hr}(x,\ud a) - \pi_t^{\theta}(x,\ud a)}{h}.
    \end{split}
\end{equation*}
Fix $t\in\mathcal T$ and assume that the result holds at time $t+1$, i.e., there exists by induction a map $D_{t+1}^{\theta}(\cdot)[r]\in \mathcal{C}(\mathcal{X};\mathbb{R})$ such that 
\small
\begin{align}
\label{eq : induction_hypothesis_value_function}
\left\|\frac{V^{\theta+hr}_{t+1} - V^\theta_{t+1}}{h}\;-\;D_{t+1}^{\theta}(\cdot)[r]\right\|_\infty\;\xrightarrow[h\to 0]{}\;0,
~ \text{and}~~
\sup_{|h|\le h_0}\left\|\frac{V^{\theta+hr}_{t+1} - V^\theta_{t+1}}{h}\right\|_\infty<+\infty
\quad\text{for some }h_0>0.
\end{align}
\normalsize
 The induction hypothesis \eqref{eq : induction_hypothesis_value_function} at index $t+1$ yields
$$V_{t+1}^{\theta+hr}=V_{t+1}^\theta+ h D_{t+1}^{\theta}(\cdot)[r] +o(h)\quad\text{in }\|\cdot\|_\infty,$$
together with the uniform bound $\sup_{|h|\le h_0}\|(V_{t+1}^{\theta+hr}-V_{t+1}^\theta)/h\|_\infty<+\infty$. Consider the map $F_t^\theta$ 
$$F_t^\theta(\lambda;x,a)
:=
\E_{X\sim\P_t^0(x,a,\cdot)}\big[-\Fc^{\lambda}(-f(t,x,a,\cdot)-V^\theta_{t+1}(\cdot))(X)\big]-\varepsilon^q\lambda,
\quad \forall \lambda\ge 0,$$
 and recall that
$$G_t^\theta(x,a)
=
\sup_{\lambda\in[0,\Lambda]}
F_t^\theta(\lambda;x,a).$$
The $1$-Lipschitz property of
$u\mapsto-\mathcal F^\lambda(-u)$ with respect to the supremum norm implies that
the $o(h)$ remainder above may be passed through the inner envelope. Hence, for
each fixed $\lambda$,
$$
\frac{
F_t^{\theta+hr}(\lambda;x,a)-F_t^\theta(\lambda;x,a)
}{h}
\longrightarrow
D_\theta F_t^\theta(\lambda;x,a)[r],
$$
where
$$
D_\theta^+F_t^\theta(\lambda;x,a)[r]
=
\int_{\mathcal X}
\inf_{y\in Y_{t,\theta,\lambda}^*(x,a;X)}
D_\theta V_{t+1}^\theta(y)[r]
\,\P_t^0(x,a,\ud X)
$$
for the right derivative, and
$$
D_\theta^-F_t^\theta(\lambda;x,a)[r]
=
\int_{\mathcal X}
\sup_{y\in Y_{t,\theta,\lambda}^*(x,a;X)}
D_\theta V_{t+1}^\theta(y)[r]
\,\P_t^0(x,a,\ud X)
$$
for the left derivative.

Since, by Lemma \ref{lem:compact_dual_maximizer_set}, the supremum over
$\lambda\ge0$ may be restricted to $[0,\Lambda]$. By the equicontinuity assumption, the quotients $\lambda\mapsto
\frac{
F_t^{\theta+hr}(\lambda;x,a)-F_t^\theta(\lambda;x,a)
}{h}$
are equicontinuous on the compact interval $[0,\Lambda]$. Since this pointwise limit is continuous in
$\lambda$, the convergence is uniform on $[0,\Lambda]$. Therefore, the envelope
argument over $\lambda$ (see Theorem \ref{def : danskin_Theorem}) gives the pointwise derivative
$$
D_\theta^+G_t^\theta(x,a)[r]
=
\sup_{\lambda\in\Lambda_t^{*,\theta}(x,a)}
D_\theta^+F_t^\theta(\lambda;x,a)[r],
$$
and similarly
$$
D_\theta^-G_t^\theta(x,a)[r]
=
\inf_{\lambda\in\Lambda_t^{*,\theta}(x,a)}
D_\theta^-F_t^\theta(\lambda;x,a)[r].
$$
Moreover, the Lipschitz property of $u\mapsto-\mathcal F^\lambda(-u)$ and the
uniform bound on the value-function quotients imply that, for $h$ sufficiently
small,
$$
\sup_{|h|\le h_0}\sup_{(x,a)\in \mathcal{X} \times A}
\left|\frac{G_t^{\theta+hr}(x,a)-G_t^\theta(x,a)}{h}\right|<\infty.
$$
Hence, for every fixed $x\in\mathcal X$, the dominated convergence theorem,
applied with respect to the fixed measure $\pi_t^\theta(x,\ud a)$, gives
$$
\int_{A}\frac{G_t^{\theta+hr}(x,a)-G_t^\theta(x,a)}{h}\,
\pi^{\theta}_t(x,\ud  a)
\xrightarrow[h\to 0]{}
\int_{A}D_\theta G_t^\theta(x,a)[r]\,\pi^\theta_t(x,\ud a).
$$
Moreover,
$$
\sup_{|h|\le h_0}\left\|
\int_{A}\frac{G_t^{\theta+hr}(x,a)-G_t^\theta(x,a)}{h}\,
\pi^{\theta}_t(x,\ud  a)
\right\|_\infty<+\infty.
$$

For the second term, Assumption \ref{assump:regularity_pi_theta} (ii) and the bounded-score condition imply by an application of the dominated convergence theorem the standard identity
$$
\frac{\pi_t^{\theta+hr}(x,\ud a)-\pi_t^\theta(x,\ud a)}{h}
\to
\big\langle \nabla_\theta\log\pi_t^\theta(x,a),r\big\rangle\,\pi_t^\theta(x,\ud a),
$$
in the weak sense uniformly in $x$. In addition, the Lipschitz property of
$u\mapsto-\mathcal F^\lambda(-u)$ implies
$$
\|G_t^{\theta+hr}-G_t^\theta\|_\infty
\le
\|V_{t+1}^{\theta+hr}-V_{t+1}^\theta\|_\infty
=O(h).
$$
Thus $G_t^{\theta+hr}\to G_t^\theta$ uniformly on $\mathcal X\times A$. Since
$G_t^\theta$ is bounded and $\|\nabla_\theta\log\pi_t^\theta\|$ is uniformly
bounded, we have
$$
\int_A G_t^{\theta+hr}(x,a)\,
\frac{\pi_t^{\theta+hr}(x,\ud a)-\pi_t^\theta(x,\ud a)}{h}
\xrightarrow[h\to0]{}
\int_A G_t^\theta(x,a)
\big\langle \nabla_\theta\log\pi_t^\theta(x,a), r \big\rangle
\pi_t^\theta(x,\ud a),
$$
and
$$
\sup_{|h|\le h_0}\left\|
\int_A G_t^{\theta+hr}(\cdot,a)\,
\frac{\pi_t^{\theta+hr}(\cdot,\ud a)-\pi_t^\theta(\cdot,\ud a)}{h}
\right\|_\infty<\infty.
$$
Collecting terms yields pointwise convergence in $x$ of
$\frac{V_t^{\theta+hr}-V_t^\theta}{h}$ to
\begin{align}
    D_\theta V_t^\theta(x)[r]
    :=
    \int_{A} D_\theta G_t^\theta(x,a)[r] \pi_t^{\theta}(x,\ud a)
    +
    \int_{A} G_t^{\theta}(x,a)
    \big\langle  \nabla_{\theta}\log \pi_t^{\theta}(x,a), r \big\rangle
    \pi_t^{\theta}(x,\ud a).
    \notag
\end{align}
The continuity of $D_\theta V_t^\theta(\cdot)[r]$ will be obtained below from
the equicontinuity of the difference quotients.

It remains to upgrade this pointwise convergence to uniform convergence. For the
chosen one-sided limit, set
$$
Q_h(x)
:=
\frac{V_t^{\theta+hr}(x)-V_t^\theta(x)}{h}.
$$
By assumption, the family $(Q_h)_{0<|h|\le h_0}$ is equicontinuous on the compact
set $\mathcal X$.

First, we prove that the limit is continuous. Let $(x_n)_{n\ge1}\subset\mathcal X$ be an arbitrary sequence such that
$x_n\to x\in\mathcal X$. Given $\eta>0$, choose
$\delta>0$ such that
$$
\big|Q_h(z)-Q_h(z')\big|<\eta
$$
whenever $\|z-z'\|<\delta$, uniformly over $0<|h|\le h_0$. For $n$ large enough,
$\|x_n-x\|<\delta$. Passing to the limit $h\to0$ in $|Q_h(x_n)-Q_h(x)|<\eta$ gives
$$
\big|D_\theta V_t^\theta(x_n)[r]-D_\theta V_t^\theta(x)[r]\big|
\le \eta.
$$
Thus
$D_\theta V_t^\theta(\cdot)[r]\in C(\mathcal X;\mathbb R).$

We now prove uniform convergence. Suppose, by contradiction, that the convergence
is not uniform. Then there exist $\eta>0$, a sequence $(h_n)_{n\ge1}$ converging to $0$ from
the chosen side, with $0<|h_n|\le h_0$, and a sequence
$(x_n)_{n\ge1}\subset\mathcal X$ such that, for every $n\ge1$, $\big|Q_{h_n}(x_n)-D_\theta V_t^\theta(x_n)[r]\big|\ge\eta.$
By compactness of $\mathcal X$, there exists a subsequence, still denoted
$(x_n)_{n\ge1}$ for simplicity, and a point $x\in\mathcal X$ such that $x_n\to x.
$ By equicontinuity of $Q_h$, $\big|Q_{h_n}(x_n)-Q_{h_n}(x)\big|\to0.$ By continuity of $D_\theta V_t^\theta(\cdot)[r]$,
$$
\big|D_\theta V_t^\theta(x_n)[r]-D_\theta V_t^\theta(x)[r]\big|\to0.
$$
Finally, $Q_{h_n}(x)\to D_\theta V_t^\theta(x)[r].$ Combining these three limits gives $Q_{h_n}(x_n)-D_\theta V_t^\theta(x_n)[r]\to0,$
contradicting the lower bound by $\eta$. Hence
$$
\Big\|
Q_h-D_\theta V_t^\theta(\cdot)[r]
\Big\|_\infty
\to0.
$$
The uniform boundedness of the quotients follows from the uniform convergence
above and from the boundedness of the continuous limit on compact $\mathcal X$.
This completes the induction.
\end{proof}

\subsection{Proofs of vector gradient representation}\label{app: gradient valued}
\begin{proof}[Proof of Lemma \ref{lem_vector_grad1}]
We prove the result by backward induction. At terminal time, $V_T^\theta=g$ does not depend on $\theta$, so
$$
\nabla_\theta V_T^\theta(x)=0,
\quad x\in\Xc.
$$

Assume that, for some $t\in\Tc$, the map $\theta\mapsto V_{t+1}^\theta(y)$ is differentiable for every $y\in\Xc$ and that $y\mapsto\nabla_\theta V_{t+1}^\theta(y)$ is continuous on $\Xc$. We denote the unique elements of $\Lambda_t^{*,\theta}(x,a)$ and $Y_{t,\theta,\lambda_t^*}^*(x,a;X)$, for every $(x,a,X)\in\Xc\times A\times\Xc$, by
$$
\Lambda_t^{*,\theta}(x,a)=\{\lambda_t^*(x,a,\theta)\}
$$
and
$$
Y_{t,\theta,\lambda_t^*(x,a,\theta)}^*(x,a;X)
=
\{y_t^*(x,a,X,\lambda_t^*(x,a,\theta),\theta)\}.
$$
For notational simplicity, set
$$
\bar y_t^*(x,a,X,\theta)
:=
y_t^*(x,a,X,\lambda_t^*(x,a,\theta),\theta),
\quad \forall(x,a,X)\in\Xc\times A\times\Xc.
$$
Therefore the right and left formulas in Theorem \ref{gradient J and F} coincide. Indeed, for every $(x,a)\in\Xc\times A$ and every direction $r\in\R^{d}$,
\begin{align*}
D_\theta^+G_t^\theta(x,a)[r]
= D_\theta^-G_t^\theta(x,a)[r] =
\E_{X\sim \P_t^0(x,a,\cdot)}
\left[
D_\theta V_{t+1}^\theta(\bar y_t^*(x,a,X,\theta))[r]
\right].
\end{align*}
By the induction hypothesis, for every $y\in\Xc$ and every $r\in\R^{d}$,
$$
D_\theta V_{t+1}^\theta(y)[r]
=
\left\langle\nabla_\theta V_{t+1}^\theta(y),r\right\rangle.
$$
Hence the right and left directional derivatives of $G_t^\theta(x,a)$ coincide and are linear in $r$. Thus, for every $(x,a)\in\Xc\times A$ and every $r\in\R^{d}$,
$$
D_\theta G_t^\theta(x,a)[r]
=
\left\langle
\E_{X\sim \P_t^0(x,a,\cdot)}
\left[
\nabla_\theta V_{t+1}^\theta(\bar y_t^*(x,a,X,\theta))
\right],r
\right\rangle.
$$
Therefore, for every $(x,a)\in\Xc\times A$,
$$
\nabla_\theta G_t^\theta(x,a)
=
\E_{X\sim \P_t^0(x,a,\cdot)}
\left[
\nabla_\theta V_{t+1}^\theta(\bar y_t^*(x,a,X,\theta))
\right].
$$

It remains to differentiate the policy expectation. For every $x\in\Xc$,
$$
V_t^\theta(x)=\int_A G_t^\theta(x,a)\pi_t^\theta(x,\mathrm da).
$$
Using the score-function identity and the bounded-score assumption, for every $x\in\Xc$ and every direction $r\in\R^{d}$,
\begin{align*}
D_\theta V_t^\theta(x)[r]
&=
\int_A
D_\theta G_t^\theta(x,a)[r]\pi_t^\theta(x,\mathrm da)
+
\int_A
G_t^\theta(x,a)
\left\langle\nabla_\theta\log\pi_t^\theta(x,a),r\right\rangle
\pi_t^\theta(x,\mathrm da)\\
&=
\left\langle
\E_{a\sim\pi_t^\theta(x,\cdot)}
\left[
\nabla_\theta G_t^\theta(x,a)
+
G_t^\theta(x,a)\nabla_\theta\log\pi_t^\theta(x,a)
\right],r
\right\rangle.
\end{align*}
Thus $\theta\mapsto V_t^\theta(x)$ is differentiable for every $x\in\Xc$, and
$$
\nabla_\theta V_t^\theta(x)
=
\E_{a\sim\pi_t^\theta(x,\cdot)}
\left[
G_t^\theta(x,a)\nabla_\theta\log\pi_t^\theta(x,a)
+
\nabla_\theta G_t^\theta(x,a)
\right],
\forall x\in\Xc.
$$
Finally, since $\mu_0$ is independent of $\theta$ and the gradient is continuous and bounded on the compact space $\Xc$,
$$
\nabla_\theta J(\theta)
=
\E_{X_0\sim\mu_0}\left[\nabla_\theta V_0^\theta(X_0)\right].
$$
Thus the common directional derivative is linear and continuous in $r$, hence it is represented by a vector in $\R^d$. The induction is complete.
\end{proof}
\section{Convexity and uniqueness of selectors}
\label{app:unique}
\subsection{Uniqueness of the inner selector}
\label{subsec:convexity_uniqueness_inner_selector}

We now give sufficient conditions under which the inner selector is unique. 

\begin{Assumption}
\label{assump:convexity_structure}
The following convexity conditions hold.

\begin{enumerate}
\item[(i)] The state space $\mathcal X\subset\mathbb R^d$ is compact and convex.

\item[(ii)] The terminal reward $g$ is convex on $\mathcal X$.

\item[(iii)] For every $t\in\mathcal T$ and every $a\in A$, the map
$$
(x,y)\mapsto f(t,x,a,y)
$$
is continuous and jointly convex on $\mathcal X\times\mathcal X$.

\item[(iv)] The transport cost $c:\mathcal X\times\mathcal X\to\mathbb R_+$
is continuous and jointly convex.

\item[(v)] For every $t\in\mathcal T$ and every fixed $a\in A$, the reference
kernel $\P_t^0$ preserves convexity in the following sense: whenever $\Phi:\mathcal X\times\mathcal X\to\mathbb R$
is continuous and jointly convex, the map
$$
x\mapsto
\int_{\mathcal X}\Phi(x,x')\,\P_t^0(x,a,\ud x')
$$
is convex on $\mathcal X$.

\item[(vi)] For every $t\in\mathcal T$ and $\theta\in\Theta$, the policy
averaging operator preserves convexity in $x$: whenever $\Phi:\mathcal X\times A\to\mathbb R$
is continuous and $x\mapsto \Phi(x,a)$ is convex for every fixed $a\in A$, the
map
$$
x\mapsto
\int_A \Phi(x,a)\,\pi_t^\theta(x,\ud a)
$$
is convex on $\mathcal X$.
\end{enumerate}
\end{Assumption}

\begin{Proposition}
\label{prop:convexity_propagation}
Fix $\theta\in\Theta$ and assume Assumption \ref{assump:convexity_structure}.
Then, for every $t\in\bar{\mathcal T}$, the robust value function
$V_t^\theta$ is convex on $\mathcal X$.
\end{Proposition}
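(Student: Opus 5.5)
The plan is a backward induction on $t\in\bar{\mathcal T}$. At each stage I will use the dual form of the robust Bellman recursion from Remark~\ref{G_dpp} and the one-step quantities $F_t^\theta$ and $G_t^\theta$, and show that every operation in the recursion preserves convexity in the state variable. The base case is immediate, since $V_T^\theta=g$ is convex by Assumption~\ref{assump:convexity_structure}(ii). For the inductive step, assume $V_{t+1}^\theta$ is convex and continuous on the convex compact set $\mathcal X$; continuity comes from Proposition~\ref{eq:uniform-cont}.

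\emph{Step 1 (inner transport problem).} Fix $a\in A$ and $\lambda\ge0$, and set
$$
\Psi_\lambda(x,x',y):=f(t,x,a,y)+V_{t+1}^\theta(y)+\lambda c(x',y),\qquad (x,x',y)\in\mathcal X^3 .
$$
Each summand is jointly convex in $(x,x',y)$:
\begin{itemize}
\item $f(t,\cdot,a,\cdot)$ is jointly convex in $(x,y)$ by (iii);
\item $V_{t+1}^\theta$ is convex in $y$ by the induction hypothesis;
\item $c$ is jointly convex in $(x',y)$ by (iv), and $\lambda\ge0$.
\end{itemize}
Hence $\Psi_\lambda$ is jointly convex on the convex set $\mathcal X^3$. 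Partial minimization of a jointly convex function over a convex set preserves joint convexity in the remaining variables. Therefore
$$
\Phi_\lambda(x,x'):=\inf_{y\in\mathcal X}\Psi_\lambda(x,x',y)=-\mathcal F^\lambda\big(-f(t,x,a,\cdot)-V_{t+1}^\theta\big)(x')
$$
is jointly convex in $(x,x')$. It is finite because the infimum is attained by compactness, and it is continuous by Berge's maximum theorem (Lemma~\ref{Lemma : berge_maximum_Theorem}), exactly as in the proof of Proposition~\ref{eq:uniform-cont}.

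\emph{Step 2 (nominal averaging and the dual supremum).} Since $\Phi_\lambda$ is continuous and jointly convex, Assumption~\ref{assump:convexity_structure}(v) gives that $x\mapsto\int_{\mathcal X}\Phi_\lambda(x,x')\,\P_t^0(x,a,\ud x')$ is convex. Subtracting the constant $\varepsilon^q\lambda$ keeps it convex, so $x\mapsto F_t^\theta(\lambda;x,a)$ is convex for each $\lambda\ge0$. Then $G_t^\theta(x,a)=\sup_{\lambda\ge0}F_t^\theta(\lambda;x,a)$ is a pointwise supremum of convex functions, hence convex in $x$. It is also finite: by Lemma~\ref{lem:compact_dual_maximizer_set} the supremum may be taken over $[0,\Lambda]$. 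By the proof of Proposition~\ref{eq:uniform-cont}, $G_t^\theta$ is continuous on $\mathcal X\times A$.

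\emph{Step 3 (policy averaging).} By the dual DPP of Remark~\ref{G_dpp}, $V_t^\theta(x)=\int_A G_t^\theta(x,a)\,\pi_t^\theta(x,\ud a)$. Here $G_t^\theta$ is continuous, and $x\mapsto G_t^\theta(x,a)$ is convex for each fixed $a$. Assumption~\ref{assump:convexity_structure}(vi) then yields convexity of $V_t^\theta$, which closes the induction.

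The main obstacle is Step~1. It needs convexity to hold jointly in $(x,x',y)$, not only in $y$, so that both the partial-minimization argument and hypothesis (v) apply. Hypothesis (v) needs a function that is jointly convex in $(x,x')$, because the nominal kernel itself depends on $x$. This is why (iii) and (iv) ask for joint convexity; for $c(x',y)=\|x'-y\|^q$ with $q\ge1$ it holds because $\|\cdot\|^q$ is convex and $(x',y)\mapsto x'-y$ is linear. I will also check that the $x$-dependence of $f$ is handled inside $\Phi_\lambda$ and not through $\P_t^0$. Relying on the dual representation (Proposition~\ref{prop:duality}) rather than the primal infimum over the ball is essential here. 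The primal formulation would produce an infimum over an $x$-dependent family of measures, whose convexity is not transparent, whereas the dual turns it into a supremum of convex functions.
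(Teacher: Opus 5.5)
Your proposal is correct and follows the paper's proof essentially line by line. Both arguments run a backward induction from $V_T^\theta=g$, establish joint convexity of $(x,x',y)\mapsto f(t,x,a,y)+V_{t+1}^\theta(y)+\lambda c(x',y)$, and minimize partially in $y$. They then apply Assumption~\ref{assump:convexity_structure}(v) to the nominal average, take the supremum over $\lambda$, and apply Assumption~\ref{assump:convexity_structure}(vi) to the policy average. The only difference is that you explicitly check the continuity of $\Phi_\lambda$ and $G_t^\theta$ required to invoke (v) and (vi), which the paper leaves implicit.
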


\begin{proof}
We argue by backward induction. At terminal time, $V_T^\theta=g$, which is
convex by Assumption \ref{assump:convexity_structure}.

Assume that $V_{t+1}^\theta$ is convex on $\mathcal X$. Fix $a\in A$ and
$\lambda\ge0$. The map
$$
(x,x',y)\mapsto
f(t,x,a,y)+V_{t+1}^\theta(y)+\lambda c(x',y)
$$
is jointly convex. Indeed, $(x,y)\mapsto f(t,x,a,y)$ is jointly convex,
$V_{t+1}^\theta$ is convex by the induction hypothesis, and $c$ is jointly
convex. Since partial minimization over a convex set preserves convexity,
$$
(x,x')\mapsto \inf_{y\in\mathcal X}
\left\{
f(t,x,a,y)+V_{t+1}^\theta(y)+\lambda c(x',y)
\right\}
$$
is jointly convex on $\Xc\times\Xc$. By Assumption \ref{assump:convexity_structure}(v), the reference kernel preserves
convexity in $x$. Hence,
$$
x\mapsto
\int_{\mathcal X}
\inf_{y\in\mathcal X}
\left\{
f(t,x,a,y)+V_{t+1}^\theta(y)+\lambda c(x',y)
\right\}\,\P_t^0(x,a,\ud x')
$$
is convex on $\mathcal X$. Subtracting the constant $\varepsilon^q\lambda$ does
not affect convexity. Therefore, for every fixed $a\in A$ and
$\lambda\ge0$,
$$
x\mapsto
\int_{\mathcal X}
\inf_{y\in\mathcal X}
\left\{
f(t,x,a,y)+V_{t+1}^\theta(y)+\lambda c(x',y)
\right\}\,\P_t^0(x,a,\ud x')
-
\varepsilon^q\lambda
$$
is convex. Taking the supremum over $\lambda\ge0$ preserves convexity. By the DPP,
$$
V_t^\theta(x)
=
\int_A \sup_{\lambda\ge 0}\Big(
\E_{X\sim \P_t^0(x,a,\cdot)}\big[-\Fc^{\lambda}(-f(t,x,a,\cdot)-V_{t+1}^\theta(\cdot))(X)\big]
-\varepsilon^q\lambda
\Big)\,\pi_t^\theta(x,\ud a).
$$
Since the policy averaging operator preserves convexity in $x$ by Assumption
\ref{assump:convexity_structure}(vi),
$V_t^\theta$ is convex. 
\end{proof}

\begin{Corollary}
\label{cor:unique_inner_selector}
Assume Assumption \ref{assump:convexity_structure}. Suppose in addition that,
for every $x\in\mathcal X$, the map $y\mapsto c(x,y)$ is strictly convex on $\mathcal X$. Then, for every $t\in\mathcal T$,
$\theta\in\Theta$, $(x,a,x')\in\mathcal X\times A\times\mathcal X$, and
$\lambda>0$, the set
$$
Y_{t,\theta,\lambda}^*(x,a;x')
=
\argmin_{y\in\mathcal X}
\left\{
f(t,x,a,y)+V_{t+1}^\theta(y)+\lambda c(x',y)
\right\}
$$
is a singleton. 
\end{Corollary}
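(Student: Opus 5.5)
The plan is to combine Proposition \ref{prop:convexity_propagation} with a strict convexity argument on the inner objective. Fix $t\in\mathcal T$, $\theta\in\Theta$, $(x,a,x')\in\mathcal X\times A\times\mathcal X$ and $\lambda>0$, and set
\[
\Psi(y):=f(t,x,a,y)+V_{t+1}^\theta(y)+\lambda c(x',y),\qquad y\in\mathcal X .
\]

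\emph{Existence.} Nonemptiness of $Y_{t,\theta,\lambda}^*(x,a;x')$ is already known from Proposition \ref{prop:measurable_inner_selector}. The reason is that $\Psi$ is continuous on the compact set $\mathcal X$: $f$ is continuous, $V_{t+1}^\theta$ is continuous by Proposition \ref{eq:uniform-cont}, and $c$ is continuous.

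\emph{Uniqueness.} This follows once $\Psi$ is shown to be strictly convex on the convex set $\mathcal X$ (Assumption \ref{assump:convexity_structure}(i)). I will treat the three terms separately.
\begin{enumerate}
\item The map $y\mapsto f(t,x,a,y)$ is convex. It is the restriction, with $x$ fixed, of the jointly convex map $(x,y)\mapsto f(t,x,a,y)$ from Assumption \ref{assump:convexity_structure}(iii).
\item The map $V_{t+1}^\theta$ is convex on $\mathcal X$. This is Proposition \ref{prop:convexity_propagation} applied at time $t+1\in\bar{\mathcal T}$.
\item The map $y\mapsto\lambda c(x',y)$ is strictly convex. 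This is the additional hypothesis of the corollary, multiplied by the positive constant $\lambda$.
\end{enumerate}
A sum of convex functions in which at least one term is strictly convex is strictly convex, so $\Psi$ is strictly convex.

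Now suppose $y_1\neq y_2$ are both minimizers, with common value $m=\min_{\mathcal X}\Psi$. Their midpoint $\tfrac12(y_1+y_2)$ lies in $\mathcal X$ by convexity, and strict convexity gives $\Psi\big(\tfrac12(y_1+y_2)\big)<m$. This contradicts minimality, so the argmin is a singleton.

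\emph{Main obstacle.} The only substantive input is the convexity of $V_{t+1}^\theta$. That step carries the structural Assumptions \ref{assump:convexity_structure}(v)--(vi), but it is fully handled by Proposition \ref{prop:convexity_propagation}. The hypothesis $\lambda>0$ is genuinely needed: at $\lambda=0$ the strictly convex term disappears, the remaining terms are merely convex, and uniqueness can fail.
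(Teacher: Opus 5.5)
Your proposal is correct and follows the paper's proof essentially step for step. The paper combines convexity of $V_{t+1}^\theta$ from Proposition~\ref{prop:convexity_propagation}, convexity of $y\mapsto f(t,x,a,y)$, and strict convexity of $\lambda c(x',\cdot)$ for $\lambda>0$, then gets existence from continuity on the compact $\mathcal X$ and uniqueness from strict convexity on the convex $\mathcal X$; your citation of Assumption~\ref{assump:convexity_structure}(iii) for convexity of $f$ in $y$ is the correct one, whereas the paper's proof cites item (v) at that point.
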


\begin{proof}
By Proposition \ref{prop:convexity_propagation}, $V_{t+1}^\theta$ is convex on
$\mathcal X$. By Assumption \ref{assump:convexity_structure}(v), for every fixed
$(t,x,a)\in X\times A$, the map $y\mapsto f(t,x,a,y)$ is convex. Hence
$$
y\mapsto f(t,x,a,y)+V_{t+1}^\theta(y)
$$
is convex. By the additional assumption, $y\mapsto c(x',y)$ is strictly convex. Therefore,
for every $\lambda>0$,
$$
y\mapsto
f(t,x,a,y)+V_{t+1}^\theta(y)+\lambda c(x',y)
$$
is strictly convex on $\mathcal X$. The objective is continuous and $\mathcal X$ is compact, so the minimum is
attained. Since the objective is strictly convex on the convex set
$\mathcal X$, the minimizer is unique. Hence
$Y_{t,\theta,\lambda}^*(x,a;x')$ is a singleton for every $\lambda>0$.
\end{proof}

\begin{Remark}
For the Euclidean Wasserstein cost $c(x,y)=\|x-y\|^q$, the map
$y\mapsto c(x,y)$ is strictly convex whenever $q>1$. In particular, for $q=2$,
it is strongly convex. Hence Corollary \ref{cor:unique_inner_selector} applies
at every positive dual maximizer. If $q=1$, strict convexity generally fails,
and uniqueness of the inner selector must come from additional strict convexity
of $y\mapsto f(t,x,a,y)+V_{t+1}^\theta(y)$.
\end{Remark}

\subsection{Uniqueness of the dual selector}
\label{subsec:uniqueness_dual_selector}

Fix $t\in\mathcal T$, $\theta\in\Theta$, and
$(x,a)\in\mathcal X\times A$. Throughout this subsection, we assume that the results of the previous subsection
give uniqueness of the inner selector on $ (0,\Lambda]$. 

\begin{Lemma}
\label{lem:no_flat_dual_uniqueness}
 $\Lambda_t^{*,\theta}(x,a)$ is a singleton if and only if
there do not exist $\lambda_0<\lambda_1$ such that
$$
[\lambda_0,\lambda_1]\subseteq \Lambda_t^{*,\theta}(x,a).
$$
\end{Lemma}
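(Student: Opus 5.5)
The plan is to use that $\Lambda_t^{*,\theta}(x,a)$ is the argmax set of a concave function over an interval, hence itself a closed interval; a closed interval is a singleton exactly when it contains no nondegenerate subinterval.

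\emph{Concavity of $\lambda\mapsto F_t^\theta(\lambda;x,a)$ on $[0,\infty)$.} For each fixed $x'$ and $y$, the map $\lambda\mapsto f(t,x,a,y)+V_{t+1}^\theta(y)+\lambda c(x',y)$ is affine. The pointwise infimum over $y\in\Xc$ of affine functions is concave, so $\lambda\mapsto \inf_y\{\cdots\}$ is concave for every $x'$. Integrating against the fixed probability measure $\P_t^0(x,a,\cdot)$ preserves concavity. The integrand is bounded, since $\Xc$ is compact and $f$, $V_{t+1}^\theta$ are continuous by Proposition~\ref{eq:uniform-cont}, so the integral is finite. Subtracting the linear term $\varepsilon^q\lambda$ keeps the function concave.

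\emph{Structure of the argmax set.} By Lemma~\ref{lem:compact_dual_maximizer_set}, $\Lambda_t^{*,\theta}(x,a)$ is nonempty and contained in $[0,\Lambda]$. Continuity of $F_t^\theta(\cdot;x,a)$ (shown in the proof of Proposition~\ref{prop:measurable_dual_selector}, or via the Lipschitz bound \eqref{eq:Phi-lambda-Lip}) makes it closed. It is also convex: if $\lambda_0,\lambda_1$ are maximizers with common value $M$, concavity gives $F(s\lambda_0+(1-s)\lambda_1)\ge M$, and hence equality. So $\Lambda_t^{*,\theta}(x,a)=[\underline\lambda,\overline\lambda]$ for some $0\le\underline\lambda\le\overline\lambda\le\Lambda$.

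\emph{Both directions of the equivalence.} If $\Lambda_t^{*,\theta}(x,a)$ is a singleton, it cannot contain $[\lambda_0,\lambda_1]$ with $\lambda_0<\lambda_1$, since such an interval has more than one point. Conversely, if it is not a singleton, then $\underline\lambda<\overline\lambda$, and the interval structure gives $[\underline\lambda,\overline\lambda]\subseteq\Lambda_t^{*,\theta}(x,a)$, so a flat segment exists.

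The only step requiring care is the convexity of the argmax set; everything else is immediate. Convexity of the argmax set rests entirely on the concavity argument. That argument is elementary, but one must check that the infimum in $y$ is finite, which follows from compactness of $\Xc$. The uniqueness of the inner selector assumed in this subsection is not needed for this equivalence.
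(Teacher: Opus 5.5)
Your proposal is correct and follows essentially the same route as the paper: concavity of $\lambda\mapsto F_t^\theta(\lambda;x,a)$ forces every point between two maximizers to be a maximizer, so a non-singleton argmax set contains a nondegenerate interval. You also explicitly verify the concavity and the nonemptiness of $\Lambda_t^{*,\theta}(x,a)$ (via Lemma~\ref{lem:compact_dual_maximizer_set}). The paper takes both for granted, and nonemptiness is genuinely needed, since for an empty argmax set the equivalence would fail.
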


\begin{proof}
If the maximizer is unique, the claim is immediate. Conversely, suppose
$\lambda_0<\lambda_1$ are two maximizers. Since $\lambda\mapsto F_t^\theta(\lambda;x,a)$ is concave on
$[0,\Lambda]$ and both endpoints attain the maximum, every point of
$[\lambda_0,\lambda_1]$ also attains the maximum. Hence the maximizer set
contains a nontrivial interval.
\end{proof}

\begin{Proposition}
\label{prop:strict_concavity_moving_minimizers}
Assume that, for every $\lambda_0\ne\lambda_1$ in $ (0,\Lambda]$,
$$
\P_t^0
\left(
x,a,
\left\{
x'\in\mathcal X:
y_{t,\theta,\lambda_0}(x,a;x')
\ne
y_{t,\theta,\lambda_1}(x,a;x')
\right\}
\right)>0.
$$
Then $F_t^\theta(\cdot;x,a)$ is strictly concave on $ (0,\Lambda]$. In particular, it has at
most one maximizer on $ (0,\Lambda]$.
\end{Proposition}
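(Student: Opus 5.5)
Set $\phi_\lambda(x'):=\inf_{y\in\mathcal X}\{f(t,x,a,y)+V_{t+1}^\theta(y)+\lambda c(x',y)\}$, so that $F_t^\theta(\lambda;x,a)=\int\phi_\lambda\,\d\P_t^0(x,a,\cdot)-\varepsilon^q\lambda$. Each $\phi_\lambda(x')$ is an infimum of affine functions of $\lambda$, hence concave in $\lambda$, and the linear term does not affect concavity. The plan is to sharpen this into strict concavity pointwise on the set where the minimizers move, and then integrate. Write $y_\lambda(x')$ for the unique inner selector on $(0,\Lambda]$, which exists by the standing uniqueness assumption of this subsection.

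First, the pointwise step. Fix $\lambda_0\neq\lambda_1$ in $(0,\Lambda]$, take $s\in(0,1)$, and put $\lambda_s=(1-s)\lambda_0+s\lambda_1$. Let $J(y,\lambda):=f(t,x,a,y)+V_{t+1}^\theta(y)+\lambda c(x',y)$, which is affine in $\lambda$, and evaluate it at $y_{\lambda_s}$:
$$\phi_{\lambda_s}(x')=(1-s)J(y_{\lambda_s},\lambda_0)+sJ(y_{\lambda_s},\lambda_1)\ge(1-s)\phi_{\lambda_0}(x')+s\phi_{\lambda_1}(x').$$
Equality forces $J(y_{\lambda_s},\lambda_0)=\phi_{\lambda_0}(x')$ and $J(y_{\lambda_s},\lambda_1)=\phi_{\lambda_1}(x')$. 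So $y_{\lambda_s}$ minimizes at both $\lambda_0$ and $\lambda_1$, and by uniqueness $y_{\lambda_0}(x')=y_{\lambda_s}(x')=y_{\lambda_1}(x')$. Contrapositively, on $D:=\{x':y_{\lambda_0}(x')\neq y_{\lambda_1}(x')\}$ the inequality is strict.

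Second, integrate against $\P_t^0(x,a,\cdot)$. The set $D$ is measurable because the unique selectors are Borel measurable by Proposition \ref{prop:measurable_inner_selector}. The functions $\phi_\lambda$ are bounded and continuous, so all integrals are finite. The integrand difference $\phi_{\lambda_s}-(1-s)\phi_{\lambda_0}-s\phi_{\lambda_1}$ is nonnegative everywhere and strictly positive on $D$. Since $D$ has positive mass by hypothesis, its integral is strictly positive. The $-\varepsilon^q\lambda$ term is affine, so $F_t^\theta(\lambda_s)>(1-s)F_t^\theta(\lambda_0)+sF_t^\theta(\lambda_1)$, which is strict concavity on $(0,\Lambda]$.

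Third, uniqueness follows directly. If there were two maximizers in $(0,\Lambda]$, strict concavity would make the value at their midpoint strictly larger than the common maximum, which is a contradiction. This is consistent with Lemma \ref{lem:no_flat_dual_uniqueness}.

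The only delicate point is the pointwise strictness argument. Its key observation is that equality in the concavity inequality pins $y_{\lambda_s}$ down as a minimizer at both endpoints; uniqueness of the selector at $\lambda_0$ and $\lambda_1$ then applies. Care is also needed that $\lambda_s\in(0,\Lambda]$, so that the selector is defined there; this holds by convexity of the interval.
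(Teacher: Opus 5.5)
Your proof is correct and follows the paper's argument essentially step for step. You use concavity of the inner value as an infimum of affine functions of $\lambda$, and you get strictness off the equality case because a minimizer at $\lambda_s$ would then minimize at both endpoints, so uniqueness forces $y_{\lambda_0}(x')=y_{\lambda_1}(x')$; you then integrate against $\P_t^0(x,a,\cdot)$ over the positive-mass set where the selectors differ. Your remarks on measurability of that set and the midpoint argument for at most one maximizer just make explicit what the paper leaves implicit.
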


\begin{proof}
For fixed $x'\in\mathcal X$, the inner value
$$
\lambda\mapsto
\phi_{x'}(\lambda):=\inf_{y\in\mathcal X}
\left\{
f(t,x,a,y)+V_{t+1}^\theta(y)+\lambda c(x',y)
\right\}
$$
is concave, as an infimum of affine functions of $\lambda$. Fix $\lambda_0\ne\lambda_1$ in $ (0,\Lambda]$, $s\in(0,1)$, and
$\lambda_s=(1-s)\lambda_0+s\lambda_1$. If equality holds in
$$
\phi_{x'}(\lambda_s)
\ge
(1-s)\phi_{x'}(\lambda_0)+s\phi_{x'}(\lambda_1),
$$
then the minimizer at $\lambda_s$ is also a minimizer at both $\lambda_0$ and
$\lambda_1$. Since the inner minimizer is unique on $ (0,\Lambda]$, this implies
$$
y_{t,\theta,\lambda_0}(x,a;x')
=
y_{t,\theta,\lambda_1}(x,a;x').
$$
Thus the inequality is strict on the set where the two selectors differ. Integrating with respect to $\P_t^0(x,a,\cdot)$ gives strict concavity of
$$
\lambda\mapsto
\int_{\mathcal X}\phi_{x'}(\lambda)\,\P_t^0(x,a,\ud x'),
$$
because the strict inequality holds on a set of positive
$\P_t^0(x,a,\cdot)$-mass by assumption. The affine term $-\varepsilon^q\lambda$ does not
affect strict concavity, so $F_t^\theta(\cdot;x,a)$ is strictly concave on
$(0,\Lambda]$.
\end{proof}

\begin{Proposition}
\label{prop:primitive_selector_movement}
Define
$$
H^\theta(t,x,a,y)
:=
f(t,x,a,y)+V_{t+1}^\theta(y).
$$
Suppose that:

\begin{enumerate}
\item[(i)] Assumptions \ref{assump:convexity_structure} hold, i.e, $y\mapsto H^\theta(t,x,a,y)$ is convex on $\mathcal X$ and has a unique
minimizer
$$
\argmin_{y\in\mathcal X}H^\theta(t,x,a,y)
=
\{\bar y(t,x,a,\theta)\}.
$$

\item[(ii)] $y\mapsto H^\theta(t,x,a,y)\in C^1(\operatorname{int}(\mathcal X))$.

\item[(iii)] For every $x'\in\mathcal X$ and every $\lambda\in  (0,\Lambda]$, we have that
$y_{t,\theta,\lambda}(x,a;x')\in\operatorname{int}(\mathcal X)$.

\item[(iv)] $\P_t^0(x,a,\cdot)$ is non-atomic:
$$
\P_t^0(x,a,y)=0,
\quad
\forall y\in\mathcal X.
$$
\end{enumerate}
Then, for every $\lambda_0\ne\lambda_1$ in $ (0,\Lambda]$,
$$
\P_t^0
\left(
x,a,
\left\{
x'\in\Xc:
y_{t,\theta,\lambda_0}(x,a;x')
\ne
y_{t,\theta,\lambda_1}(x,a;x')
\right\}
\right)>0.
$$
More precisely,
$$
\left\{
x'\in \Xc:
y_{t,\theta,\lambda_0}(x,a;x')
=
y_{t,\theta,\lambda_1}(x,a;x')
\right\}
\subseteq
\{\bar y(t,x,a,\theta)\}.
$$
\end{Proposition}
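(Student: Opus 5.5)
The argument compares first-order optimality conditions at two different multipliers. Fix $\lambda_0\ne\lambda_1$ in $(0,\Lambda]$ and write $y_i(x'):=y_{t,\theta,\lambda_i}(x,a;x')$; these are well defined by the uniqueness of the inner selector assumed throughout this subsection. By (iii) the minimizers are interior. By (ii) and the differentiability of $c(x',\cdot)$ on the interior of $\mathcal X$, which holds for $c(x',y)=\|x'-y\|^q$ with $q>1$, each minimizer satisfies the stationarity condition
\[
\nabla_y H^\theta(t,x,a,y_i(x'))+\lambda_i\nabla_y c(x',y_i(x'))=0,\qquad i=0,1.
\]
Suppose $y_0(x')=y_1(x')=:y$ for some $x'$. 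Subtracting the two conditions gives $(\lambda_0-\lambda_1)\nabla_y c(x',y)=0$, so $\nabla_y c(x',y)=0$. For $c(x',y)=\|x'-y\|^q$ this forces $y=x'$. Substituting back yields $\nabla_y H^\theta(t,x,a,y)=0$. Since $H^\theta(t,x,a,\cdot)$ is convex by (i) and $y$ is interior, $y$ is a global minimizer of $H^\theta$. By the uniqueness in (i), $y=\bar y(t,x,a,\theta)$. This gives the case $x'=y=\bar y$ and hence the displayed inclusion: every $x'$ where the two selectors agree equals $\bar y(t,x,a,\theta)$.

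The probabilistic claim then follows from non-atomicity. The coincidence set is contained in the singleton $\{\bar y(t,x,a,\theta)\}$, whose $\P_t^0(x,a,\cdot)$-mass is zero by (iv). Its complement therefore has full mass $1>0$, which is exactly the hypothesis required in Proposition \ref{prop:strict_concavity_moving_minimizers}.

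The main obstacle is the step $\nabla_y c(x',y)=0\Rightarrow y=x'$, together with differentiability of $c(x',\cdot)$ at the relevant points. For $q>1$, $\nabla_y\|x'-y\|^q=q\|y-x'\|^{q-2}(y-x')$, which vanishes only at $y=x'$; I would state this explicitly. For $q=1$ the cost is not differentiable at $y=x'$. In that case I would replace the gradient identity by subdifferential inclusions, $0\in\nabla H^\theta(y)+\lambda_i\partial_y c(x',y)$. If $y\neq x'$, $c$ is differentiable there and the argument above applies. If $y=x'$, then $y=x'$ directly, and the same convexity argument only needs $0\in\partial H$. That in turn requires $\nabla H^\theta(y)\in -\lambda_i B$ for both $i$, which does not by itself force $\nabla H=0$. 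I would therefore restrict the claim to $q>1$ or flag this point, noting that the coincidence set is in any case contained in the diagonal $\{x'=y\}$.
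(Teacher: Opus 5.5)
Your proposal is correct and follows the paper's proof essentially step for step. You subtract the two first-order conditions at a coincidence point, deduce $\nabla_y\|x'-y\|^q=0$ and hence $y=x'$, use convexity and uniqueness of the minimizer of $H^\theta(t,x,a,\cdot)$ to get $x'=\bar y(t,x,a,\theta)$, and conclude by non-atomicity. The paper also uses $q>1$ at the step $\nabla_y\|x'-y\|^q=0\Rightarrow y=x'$ without listing it as a hypothesis, so your explicit restriction to $q>1$ is an appropriate caveat rather than a gap; the same goes for your remark that for $q=1$ agreement can occur at interior $x'$ with $\|\nabla H^\theta(t,x,a,x')\|\le\min(\lambda_0,\lambda_1)$.
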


\begin{proof}
Fix $\lambda_0\ne\lambda_1$ in $ (0,\Lambda]$ and suppose that, for some $x'\in\mathcal X$,
$$
y_{t,\theta,\lambda_0}(x,a;x')
=
y_{t,\theta,\lambda_1}(x,a;x')
=:y.
$$
By assumption, $y\in\operatorname{int}(\mathcal X)$. The first-order optimality
conditions give
$$
\nabla H^\theta(t,x,a,y)
+
\lambda_i\nabla_y\|X-y\|^q
=
0,
\quad i=\{0,1\}.
$$
Subtracting yields
$$
(\lambda_0-\lambda_1)\nabla_y\|x'-y\|^q=0.
$$
Since $\lambda_0\ne\lambda_1$,
$$
\nabla_y\|x'-y\|^q=0.
$$
For $q>1$, this is equivalent to $y=x'$. Hence $x'=y$. Returning to the first-order
condition gives
$$
\nabla H^\theta(t,x,a,x')=0.
$$
Since $y\mapsto H^\theta(t,x,a,y)$ is convex, $x'$ minimizes $y\mapsto H^\theta(t,x,a,y)$. By
uniqueness of the minimizer, $x'=\bar y(t,x,a,\theta).$
Therefore the equality set is contained in
$\{\bar y(t,x,a,\theta)\}$. By non-atomicity, this singleton has zero
$\P_t^0(x,a,\cdot)$-mass. Hence the set where the two selectors differ has positive mass.
\end{proof}
\begin{Remark}
Assumption \ref{assump:convexity_structure}(ii) is stated directly for
readability. It can be verified from more explicit smoothness conditions: for
instance, if $g\in C^1(\operatorname{int}(\mathcal X))$,
$(x,y)\mapsto f(t,x,a,y)$ is $C^1$, the inner and dual optimizers are unique and
interior, and differentiation can be passed through the transition and action
integrals, then an envelope-theorem argument propagates $C^1$ regularity
backward through the Bellman recursion. We omit these stronger conditions to
keep the presentation concise.
\end{Remark}

\begin{Corollary}
\label{cor:primitive_dual_uniqueness}
Assume the conditions of Proposition \ref{prop:primitive_selector_movement}.
Then $F_t^\theta(\cdot;x,a)$ is strictly concave on $ (0,\Lambda]$. Consequently, if the
dual maximizer belongs to $ (0,\Lambda]$, then $\Lambda_t^{*,\theta}(x,a)$ is a singleton.
\end{Corollary}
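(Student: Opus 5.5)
The plan is to chain Proposition \ref{prop:primitive_selector_movement} into Proposition \ref{prop:strict_concavity_moving_minimizers}, and then conclude with Lemma \ref{lem:no_flat_dual_uniqueness}, or more directly with a strict concavity argument.

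\textbf{Step 1: selector movement.} Under the hypotheses of Proposition \ref{prop:primitive_selector_movement}, for every pair $\lambda_0\ne\lambda_1$ in $(0,\Lambda]$, the set of nominal points $x'$ where $y_{t,\theta,\lambda_0}(x,a;x')=y_{t,\theta,\lambda_1}(x,a;x')$ is contained in the singleton $\{\bar y(t,x,a,\theta)\}$. This singleton is $\P_t^0(x,a,\cdot)$-null by non-atomicity. Hence the set where the two selectors differ has positive (in fact full) $\P_t^0(x,a,\cdot)$-mass.

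\textbf{Step 2: strict concavity.} This is exactly the hypothesis of Proposition \ref{prop:strict_concavity_moving_minimizers}, which yields strict concavity of $F_t^\theta(\cdot;x,a)$ on $(0,\Lambda]$. Uniqueness of the inner selector on $(0,\Lambda]$ is required there. I would supply it from Corollary \ref{cor:unique_inner_selector}, since $q>1$ is implicit in the proof of Proposition \ref{prop:primitive_selector_movement}, where $\nabla_y\|x'-y\|^q=0$ forces $y=x'$. I would add a sentence making this dependence on $q>1$ explicit.

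\textbf{Step 3: uniqueness of the dual maximizer.} Suppose some maximizer $\lambda^*$ lies in $(0,\Lambda]$, and suppose there were a second maximizer $\lambda'\neq\lambda^*$. By Lemma \ref{lem:compact_dual_maximizer_set}, $\lambda'\in[0,\Lambda]$. We split into two cases.
\begin{itemize}
\item[(a)] If $\lambda'\in(0,\Lambda]$, then strict concavity gives
$$F_t^\theta\big(\tfrac{\lambda^*+\lambda'}{2};x,a\big)>\max F_t^\theta,$$
which is a contradiction.
\item[(b)] If $\lambda'=0$, then plain concavity of $F_t^\theta(\cdot;x,a)$ on $[0,\Lambda]$ (an integral of infima of affine maps, minus a linear term) makes the whole segment $[0,\lambda^*]$ optimal, as in Lemma \ref{lem:no_flat_dual_uniqueness}. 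Then two distinct points of $(0,\lambda^*]$ are both maximizers, and their midpoint contradicts strict concavity on $(0,\Lambda]$ as in case (a).
\end{itemize}
Hence $\Lambda_t^{*,\theta}(x,a)=\{\lambda^*\}$.

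\textbf{Main obstacle.} The only delicate point is case (b), the boundary $\lambda=0$, where the inner selector need not be unique and strict concavity is not asserted. It is handled by passing through the flat segment $[0,\lambda^*]$, which reduces case (b) to interior points where strict concavity applies.
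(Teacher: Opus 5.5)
Your proposal is correct and follows the paper's own chain: Proposition \ref{prop:primitive_selector_movement} gives the movement condition, Proposition \ref{prop:strict_concavity_moving_minimizers} turns it into strict concavity on $(0,\Lambda]$, and strict concavity gives uniqueness of the maximizer. The paper closes with the one-line remark that ``a strictly concave function has at most one maximizer on a convex set''. Your two additions fill in what that remark leaves implicit: the case (b) argument, which uses plain concavity on $[0,\Lambda]$ to rule out a co-maximizer at $\lambda=0$, and the explicit appeal to Corollary \ref{cor:unique_inner_selector} with $q>1$ to justify inner uniqueness.
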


\begin{proof}
The movement condition in
Proposition \ref{prop:strict_concavity_moving_minimizers} follows from
Proposition \ref{prop:primitive_selector_movement}. Hence
$F_t^\theta(\cdot;x,a)$ is strictly concave on $ (0,\Lambda]$. A strictly concave function
has at most one maximizer on a convex set.
\end{proof}

\begin{Remark}
Strict concavity is only sufficient, not necessary, for uniqueness of the dual
selector. Since $F_t^\theta(\cdot;x,a)$ is concave, uniqueness is equivalent to
the absence of a nontrivial flat interval of maximizers, as stated in
Lemma \ref{lem:no_flat_dual_uniqueness}.
\end{Remark}

\section{Additional Numerical Results}\label{app:num}
\subsection{Robust self-exciting Multi-armed bandits}
We next consider a self-exciting bandit, which extends the static robust
bandit rule to a finite-horizon Markovian setting. In a standard robust
bandit, the reward law of each arm $j\in\{1,\ldots,K\}$ is uncertain and
belongs to a Wasserstein ball around a nominal law
$\P_j^0\in\Pc_1(\mathbb R)$. For the $1$-Wasserstein distance on $\mathbb R$,
the worst-case mean reward of arm $j$ is
$$
\inf_{\Q\in\Pc_1(\mathbb R):\,\Wc_1(\Q,\P_j^0)\le \varepsilon_j}
\E_{R\sim\Q}[R]
=
\mu_j^0-\varepsilon_j,
\quad
\mu_j^0:=\E_{R\sim\P_j^0}[R],
$$
where $\varepsilon_j>0$ is the ambiguity radius. Thus the static robust decision
selects an arm maximizing $\mu_j^0-\varepsilon_j$.

We now introduce a dynamic version in which the success probability of an arm
depends on the previous outcome. Let $K\in\mathbb N^*$ be the number of arms,
$K_{\sup}\in\mathbb N^*$ the maximal stake, and $T\in\mathbb N^*$ the horizon,
with time index $t\in\Tc:=\{0,\ldots,T-1\}$. The state is
$$
x_t=(m_t,b_t)\in\Xc,
\qquad
\Xc
=
\{-K_{\sup},\ldots,-1,1,\ldots,K_{\sup}\}
\times
\{1,\ldots,K\},
$$
where $m_t\in\{-K_{\sup},\ldots,-1,1,\ldots,K_{\sup}\}$ is the signed outcome
of the previous investment and $b_t\in\{1,\ldots,K\}$ is the arm played in the
previous round. The action is
$$
a_t=(k_t,j_t)\in A,
\qquad
A=\{1,\ldots,K_{\sup}\}\times\{1,\ldots,K\},
$$
where $k_t\in\{1,\ldots,K_{\sup}\}$ is the stake and
$j_t\in\{1,\ldots,K\}$ is the selected arm.

Let $p_j\in(0,1)$ denote the base success probability of arm
$j\in\{1,\ldots,K\}$. Given $x_t=(m_t,b_t)\in\Xc$ and
$a_t=(k_t,j_t)\in A$, the success probability is
$$
\tilde p(x_t,a_t)
=
p_{j_t}
+
\lambda\,\operatorname{sign}(m_t)\,
\mathbbm{1}_{\{b_t=j_t\}},
$$
where $\lambda>0$ is the excitation parameter. We assume $0<\lambda<\min_{j\in\{1,\ldots,K\}}\{p_j,1-p_j\},$ so that $\tilde p(x_t,a_t)\in(0,1)$ for all $(x_t,a_t)\in\Xc\times A$. The next state is generated by
$$
\xi_t\sim\operatorname{Bernoulli}(\tilde p(x_t,a_t)),
\qquad
m_{t+1}=k_t(2\xi_t-1),
\qquad
b_{t+1}=j_t,
$$
where $\xi_t\in\{0,1\}$ is the success indicator. Thus
$x_{t+1}=(m_{t+1},b_{t+1})\in\Xc$. The one-step reward is the signed payoff
$$
f(t,x,a,x')=m',
\qquad
g(x)=0,
\quad
\forall (t,x,a,x')\in\Tc\times\Xc\times A\times\Xc,
$$
where $x'=(m',b')\in\Xc$ is the next state. Hence a successful play with stake
$k_t$ gives reward $+k_t$, while an unsuccessful play gives reward $-k_t$.

For the robust formulation, the reference transition kernel
$\P^0_t(x,a,\cdot)\in\Pc(\Xc)$ is the kernel induced by the nominal probabilities
$\tilde p(x,a)$. The agent evaluates each policy under the worst-case transition
kernel within a Wasserstein ball of radius $\varepsilon>0$ around
$\P_t^0(x,a,\cdot)$. As in the previous finite-state examples, we parametrize
the policy by a time-dependent tabular softmax,
$$
\pi_t^\theta(x,a)
=
\frac{\exp(\theta_{t,x,a})}
{\sum_{a'\in A}\exp(\theta_{t,x,a'})},
\quad
\forall(t,x,a)\in\Tc\times\Xc\times A,
$$
where
$\theta=(\theta_{t,x,a})_{(t,x,a)\in\Tc\times\Xc\times A}
\in\mathbb R^{|\Tc|\times|\Xc|\times|A|}.$
For illustration, we report the greedy action
$
a_t^*(x)=\argmax_{a\in A}\pi_t^\theta(x,a),$ for all $(t,x)\in\Tc\times\Xc,$
extracted from the learned stochastic policy. Figure \ref{fig:bandits} compares
the greedy actions learned by Algorithm \ref{alg:robust_actor_critic} with the
exact robust solution. The exact solution is computed by backward induction:
starting from the terminal condition, we sweep backward in time and, at each
state, evaluate the robust Bellman operator over all feasible actions.

\begin{figure}[H]
    \centering
    \includegraphics[width=0.4\textwidth]{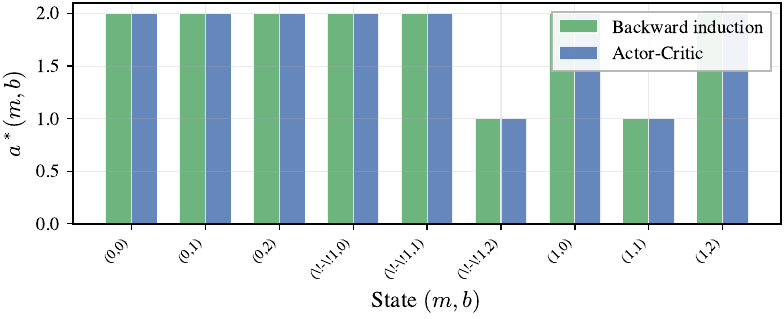}
    \caption{%
        Greedy actions at $t=0$ for $\varepsilon=0.3$ at every state $(m,b)$.
    }
    \label{fig:bandits}
\end{figure}
Both methods agree at every state $(m,b)$: the agent predominantly
selects arm $2$ (highest base probability) but switches to arm $1$
at states where the self-exciting feedback makes it locally preferable.
\subsection{Exact dynamic-programming recovery}
\label{subsec:multiseed_dp}

As a preliminary validation, Table \ref{tab:dp_recovery} reports recovery of exact robust dynamic programming over $10$ seeds. The value error is
$$\Delta_V:=\|V_0^{\theta^\star}-V_0^{\rm DP}\|_\infty,$$
where $V_0^{\theta^\star}$ is the value of the learned policy and $V_0^{\rm DP}$ is the exact robust-DP value. The policy error is the greedy-action mismatch
$$\Delta_\pi
:=
\frac{1}{T|\mathcal X|}
\sum_{t,x}
\mathds{1}_{\{\hat a_t(x)\neq a_t^{\rm DP}(x)\}}.$$

{\small\begin{table}[H]
\centering
\caption{Exact robust-DP recovery over $10$ seeds. $\Delta_V=\|V_0^{\theta^\star}-V_0^{\rm DP}\|_\infty$ is the value error and $\Delta_\pi$ is the greedy-policy mismatch against exact robust DP.}
\label{tab:dp_recovery}
\small
\begin{tabular}{lccc}
\toprule
Environment & $\varepsilon$ & $\Delta_V \downarrow$ & $\Delta_\pi \downarrow$ \\
\midrule
Coin toss    & $0.5$ & $0.0012$ & $0.0000$ \\
Coin toss    & $1.0$ & $0.0017$ & $0.0000$ \\
Coin toss    & $2.0$ & $0.0021$ & $0.0000$ \\
Supply chain & $0.5$ & $0.0069$ & $0.0000$ \\
Supply chain & $1.0$ & $0.0114$ & $0.0000$ \\
Supply chain & $2.0$ & $0.0264$ & $0.0727$ \\
\bottomrule
\end{tabular}
\end{table}}
\subsection{Role of the robust-sensitivity term}
 We compare the full recursion with a naive robust actor--critic variant that retains only the score-function term and drops the robust-sensitivity correction. Concretely, the full recursion uses the target
\[
\widehat z_t
=
\widehat G_t^\theta(x_t,a_t)\nabla_\theta\log\pi_t^\theta(x_t,a_t)
+
\widehat{\nabla_\theta G_t^\theta}(x_t,a_t),
\]
whereas the naive variant uses
\[
\widehat z_t^{\,\rm naive}
=
\widehat G_t^\theta(x_t,a_t)\nabla_\theta\log\pi_t^\theta(x_t,a_t).
\]
This comparison tests whether the Bellman-sensitivity correction is needed to recover the exact robust dynamic-programming solution.

{\small\begin{table}[H]
\centering
\caption{Robust-sensitivity term: policy mismatch
$\Delta_\pi$ to exact Wasserstein dynamic programming for the full
recursion. Mean $\pm$ standard error over $5$ seeds; lower
is better.}
\label{tab:ablation_dpi}\label{tab:ablation_dpi2}
\small
\begin{tabular}{llcc}
\toprule
Environment & Method & $\varepsilon$ & $\Delta_\pi \downarrow$ \\
\midrule
Coin toss & Full & $0.5$ & $0.000\pm0.000$ \\
Coin toss & Naive & $0.5$ & $0.638\pm0.023$ \\
Coin toss & Full & $1.0$ & $0.000\pm0.000$ \\
Coin toss & Naive & $1.0$ & $0.620\pm0.022$ \\
Coin toss & Full & $2.0$ & $0.000\pm0.000$ \\
Coin toss & Naive & $2.0$ & $0.620\pm0.017$ \\
\midrule
Supply chain & Full & $0.5$ & $0.000\pm0.000$ \\
Supply chain & Naive & $0.5$ & $0.855\pm0.020$ \\
Supply chain & Full & $1.0$ & $0.000\pm0.000$ \\
Supply chain & Naive & $1.0$ & $0.855\pm0.020$ \\
Supply chain & Full & $2.0$ & $0.073\pm0.000$ \\
Supply chain & Naive & $2.0$ & $0.862\pm0.015$ \\
\bottomrule
\end{tabular}
\end{table}}

\subsection{Inner-optimization sensitivity}
\label{subsec:inner_sensitivity}
The robust Bellman backup requires an inner maximization over the dual variable
$\lambda$, which we approximate on a grid with $n_\lambda$ points. We vary
$n_\lambda$ to test the effect of this discretization on the objective, policy,
gradient estimate, and runtime. The finest grid, $n_\lambda=200$, is used as the
reference. We also report the relative gradient error
$$\frac{\|\nabla_\theta J-\nabla_\theta J^{\rm FD}\|_2}
{\|\nabla_\theta J^{\rm FD}\|_2},$$
where $\nabla_\theta J^{\rm FD}$ is the central finite-difference estimate.

{\small\begin{table}[H]
\centering
\caption{
Sensitivity to the dual-grid resolution $n_\lambda$. $J$ is the robust
objective and $\Delta_\pi$ is the greedy-policy mismatch relative to the reference policy. Time/iter. is the average wall-clock time per iteration.
}
\label{tab:inner_opt}
\small
\begin{tabular}{ccccc}
\toprule
$n_\lambda$ & $J \uparrow$ & $\Delta_\pi$ vs. ref. $\downarrow$
& Gradient rel.~$\ell_2 \downarrow$ & Time/iter. $\downarrow$ \\
\midrule
$5$   & $+0.1028$ & $0.000$ & $1.46\times10^{-10}$ & $4.0$ ms \\
$10$  & $+0.1305$ & $0.000$ & $1.44\times10^{-10}$ & $5.3$ ms \\
$25$  & $+0.1473$ & $0.000$ & $1.45\times10^{-10}$ & $8.9$ ms \\
$50$  & $+0.1541$ & $0.000$ & $1.46\times10^{-10}$ & $13.3$ ms \\
$100$ & $+0.1545$ & $0.000$ & $1.42\times10^{-10}$ & $24.7$ ms \\
$200$ & $+0.1554$ & $0.000$ & $1.41\times10^{-10}$ & $44.2$ ms \\
\bottomrule
\end{tabular}
\end{table}}
The reported gradient error measures consistency with finite differences computed using the same discretized dual grid. It is not an error relative to the continuous dual problem.
\subsection{Additional multi-dimensional LQ results}
\label{appendix:multidim_lq}

As a complementary check, we evaluate nominal and robust Riccati
controllers on random LQ instances under a perturbation grid. The robust controller is indexed
by a robustness level $\alpha$, with $\lambda=\alpha\bar\lambda$. For each
controller $\pi$, let $C_{\rm nom}(\pi)$ be the average quadratic cost under
nominal dynamics and let $C_{\rm worst}(\pi)$ be the largest cost over the
perturbation grid. We report paired differences
$$\Delta C_{\rm worst}
=
C_{\rm worst}(\pi_{\rm robust})-C_{\rm worst}(\pi_{\rm nominal}),
\quad
\Delta C_{\rm nom}
=
C_{\rm nom}(\pi_{\rm robust})-C_{\rm nom}(\pi_{\rm nominal}).$$
Costs are reported, so lower is better and negative $\Delta C$ means that the
robust controller improves over the nominal controller. For each dimension,
results are paired over the same $10$ random instances.

{\small\begin{table}[H]
\centering
\caption{
Paired high-dimensional LQ stress test. We report paired differences
$\Delta C=C_{\rm robust}-C_{\rm nominal}$ over $10$ random instances per
dimension.
}
\label{tab:lq_paired}
\small
\begin{tabular}{ccccc}
\toprule
$d$ & Robust setting & $\Delta C_{\rm worst}\downarrow$ & $\Delta C_{\rm nom}$ & Sign agreement \\
\midrule
$2$  & $\alpha=10$ & $-0.080\pm0.024$ & $+0.008\pm0.004$ & $10/10$ \\
$2$  & $\alpha=25$ & $-0.033\pm0.009$ & $+0.000\pm0.000$ & $10/10$ \\
$5$  & $\alpha=5$  & $-0.227\pm0.071$ & $+0.090\pm0.037$ & $10/10$ \\
$5$  & $\alpha=10$ & $-0.140\pm0.042$ & $+0.014\pm0.006$ & $10/10$ \\
$10$ & $\alpha=5$  & $-0.310\pm0.075$ & $+0.017\pm0.022$ & $10/10$ \\
$10$ & $\alpha=10$ & $-0.166\pm0.038$ & $-0.006\pm0.010$ & $10/10$ \\
$50$ & $\alpha=5$  & $-0.482\pm0.050$ & $+0.060\pm0.020$ & $10/10$ \\
$50$ & $\alpha=25$ & $-0.103\pm0.010$ & $+0.000\pm0.000$ & $10/10$ \\
\bottomrule
\end{tabular}
\end{table}}

\subsection{Learning Curves}

\begin{figure}[H]
    \centering

    \begin{minipage}[H]{0.62\textwidth}
        \centering
        \includegraphics[width=\textwidth]{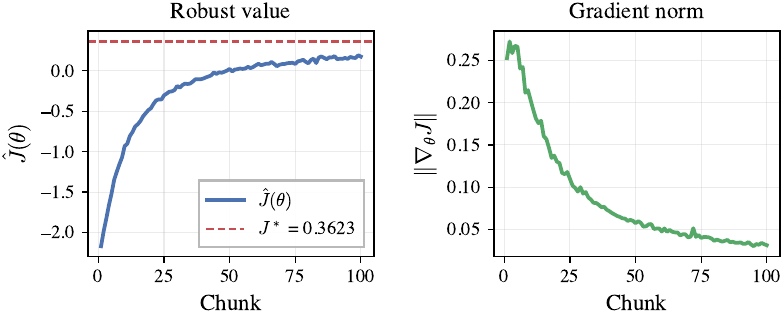}
        {\small (a) $\varepsilon = 0.5$}
    \end{minipage}

    \vspace{0.5em}

    \begin{minipage}[H]{0.62\textwidth}
        \centering
        \includegraphics[width=\textwidth]{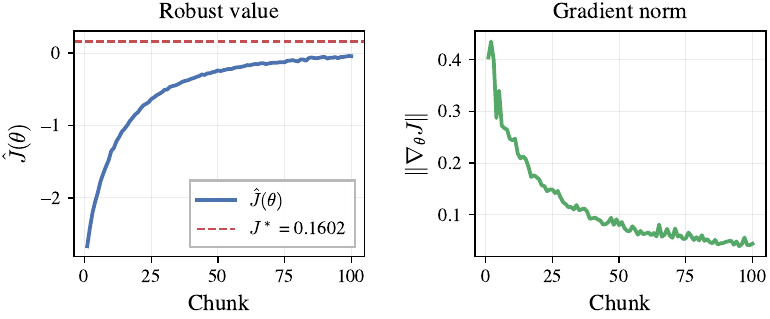}
        {\small (b) $\varepsilon = 1$}
    \end{minipage}

    \vspace{0.5em}

    \begin{minipage}[H]{0.62\textwidth}
        \centering
        \includegraphics[width=\textwidth]{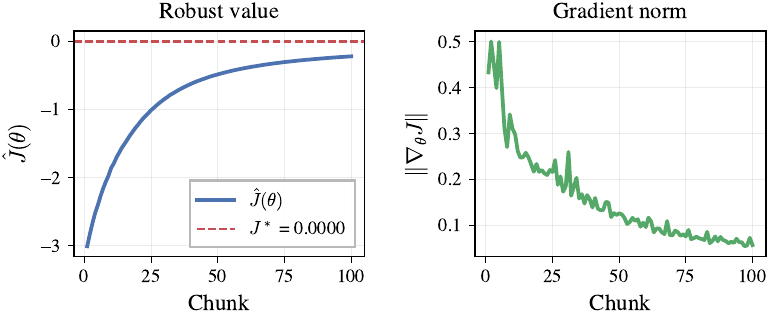}
        {\small (c) $\varepsilon = 2$}
    \end{minipage}

    \caption{Learning curves of the coin toss example over training for different values of $\varepsilon$.}
    \label{fig:coin_toss_eps}
\end{figure}

\begin{figure}[H]
    \centering
        \centering
        \includegraphics[width=0.65\textwidth]{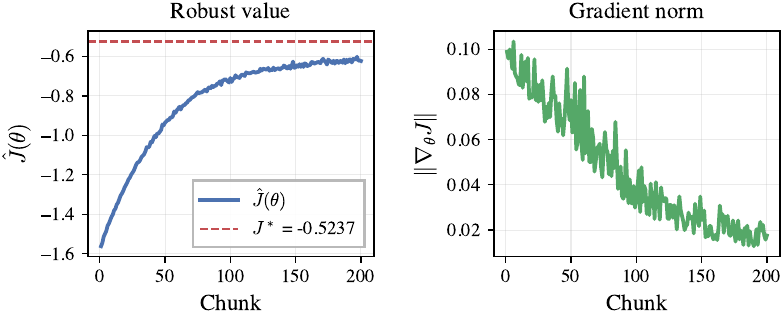}
\caption{Learning curves of the robust self-exciting Multi-armed bandits example over training for $\varepsilon = 0.3$.}
    \label{fig:bandits_eps}
\end{figure}

\begin{figure}[H]
    \centering
        \centering
        \includegraphics[width=0.65\textwidth]{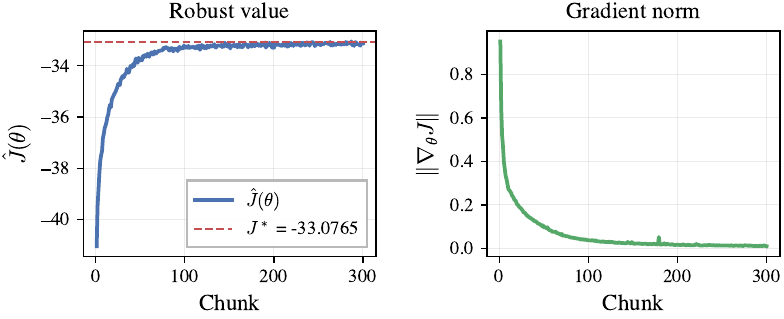}
\caption{Learning curves of the supply chain example over training for $\varepsilon = 1$.}
    \label{fig:supply_chain_eps}
\end{figure}
\begin{figure}[H]
    \centering
        \centering
        \includegraphics[width=0.65\textwidth]{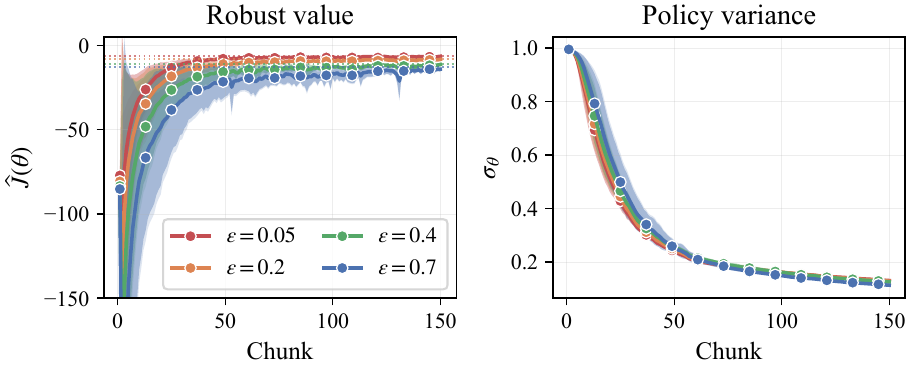}
\caption{Learning curves of the robust linear-quadratic control example over training for $\varepsilon = 0.3$.}
    \label{fig:LQ_eps}
\end{figure}

\begin{small}
\bibliography{References.bib}   

@inproceedings{wang_policy_2022,
  author    = {Wang, Y. and Zou, S.},
  title     = {Policy Gradient Method for Robust Reinforcement Learning},
  booktitle = {Proceedings of the 39th International Conference on Machine Learning},
  series    = {Proceedings of Machine Learning Research},
  volume    = {162},
  pages     = {23484--23526},
  publisher = {PMLR},
  year      = {2022}
}

@article{neufeld_markov_2023,
  author  = {Neufeld, A. and Sester, J. and {\v{S}}iki{\'c}, M.},
  title   = {Markov Decision Processes under Model Uncertainty},
  journal = {Mathematical Finance},
  volume  = {33},
  number  = {3},
  pages   = {618--665},
  year    = {2023}
}

@article{kuratowski_general_1965,
  author  = {Kuratowski, K. and Ryll-Nardzewski, C.},
  title   = {A General Theorem on Selectors},
  journal = {Bulletin de l'Acad{\'e}mie Polonaise des Sciences, S{\'e}rie des Sciences Math{\'e}matiques, Astronomiques et Physiques},
  volume  = {13},
  pages   = {397--403},
  year    = {1965}
}

@misc{bayraktar2025dno,
  author        = {Bayraktar, E. and Feng, Q. and Zhang, Z. and Zhang, Z.},
  title         = {Deep Neural Operator Learning for Probabilistic Models},
  year          = {2025},
  eprint        = {2511.07235},
  archiveprefix = {arXiv}
}

@book{Bertsekas1999,
  author    = {Bertsekas, D. P.},
  title     = {Nonlinear Programming},
  edition   = {2},
  publisher = {Athena Scientific},
  address   = {Belmont, MA},
  year      = {1999}
}

@article{WiesemannKuhnSim2014,
  author  = {Wiesemann, W. and Kuhn, D. and Sim, M.},
  title   = {Distributionally Robust Convex Optimization},
  journal = {Operations Research},
  volume  = {62},
  number  = {6},
  pages   = {1358--1376},
  year    = {2014}
}

@article{EsfahaniKuhn2018,
  author  = {Mohajerin Esfahani, P. and Kuhn, D.},
  title   = {Data-Driven Distributionally Robust Optimization Using the {W}asserstein Metric: Performance Guarantees and Tractable Reformulations},
  journal = {Mathematical Programming},
  volume  = {171},
  number  = {1--2},
  pages   = {115--166},
  year    = {2018}
}

@article{GaoKleywegt2023,
  author  = {Gao, R. and Kleywegt, A. J.},
  title   = {Distributionally Robust Stochastic Optimization with {W}asserstein Distance},
  journal = {Mathematics of Operations Research},
  volume  = {48},
  number  = {2},
  pages   = {603--655},
  year    = {2023}
}

@article{BlanchetKangMurthy2019,
  author  = {Blanchet, J. and Kang, Y. and Murthy, K.},
  title   = {Robust {W}asserstein Profile Inference and Applications to Machine Learning},
  journal = {Journal of Applied Probability},
  volume  = {56},
  number  = {3},
  pages   = {830--857},
  year    = {2019}
}

@article{BartlDrapeauOblojWiesel2021,
  author  = {Bartl, D. and Drapeau, S. and Ob{\l}{\'o}j, J. and Wiesel, J.},
  title   = {Sensitivity Analysis of {W}asserstein Distributionally Robust Optimization Problems},
  journal = {Proceedings of the Royal Society A},
  volume  = {477},
  number  = {2256},
  pages   = {20210176},
  year    = {2021},
  doi     = {10.1098/rspa.2021.0176}
}

@article{CarteaBhudisaksangSanchezBetancourt2025,
  author  = {Bhudisaksang, T. and Cartea, {\'A}. and S{\'a}nchez-Betancourt, L.},
  title   = {Adaptive-Robust Portfolio Optimisation},
  journal = {Mathematics and Financial Economics},
  volume  = {20},
  pages   = {171--202},
  year    = {2026},
  doi     = {10.1007/s11579-025-00411-4}
}

@article{BertsimasThiele2006,
  author  = {Bertsimas, D. and Thiele, A.},
  title   = {A Robust Optimization Approach to Inventory Theory},
  journal = {Operations Research},
  volume  = {54},
  number  = {1},
  pages   = {150--168},
  year    = {2006}
}

@book{AliprantisBorder2006,
  author    = {Aliprantis, C. D. and Border, K. C.},
  title     = {Infinite Dimensional Analysis: A Hitchhiker's Guide},
  edition   = {3},
  publisher = {Springer},
  address   = {Berlin--Heidelberg},
  year      = {2006}
}

@article{BlanchetChenZhou2022,
  author  = {Blanchet, J. and Chen, L. and Zhou, X. Y.},
  title   = {Distributionally Robust Mean-Variance Portfolio Selection with {W}asserstein Distances},
  journal = {Management Science},
  volume  = {68},
  number  = {9},
  pages   = {6382--6410},
  year    = {2022}
}

@misc{SaulduboisTouzi2024,
  author        = {Sauldubois, N. and Touzi, N.},
  title         = {First-Order Martingale Model Risk and Semi-Static Hedging},
  year          = {2024},
  eprint        = {2410.06906},
  archiveprefix = {arXiv}
}

@misc{CompointSaulduboisTouzi2025,
  author        = {Compoint, A. and Sauldubois, N. and Touzi, N.},
  title         = {Sensitivity Analysis of Distributionally Robust BSDEs and RBSDEs},
  year          = {2025},
  eprint        = {2511.01828},
  archiveprefix = {arXiv}
}

@article{KimChung2024,
  author  = {Kim, Y. G. and Chung, B. D.},
  title   = {Data-Driven {W}asserstein Distributionally Robust Dual-Sourcing Inventory Model under Uncertain Demand},
  journal = {Omega},
  volume  = {127},
  pages   = {103112},
  year    = {2024}
}

@article{NilimElGhaoui2005,
  author  = {Nilim, A. and El Ghaoui, L.},
  title   = {Robust Control of {M}arkov Decision Processes with Uncertain Transition Matrices},
  journal = {Operations Research},
  volume  = {53},
  number  = {5},
  pages   = {780--798},
  year    = {2005}
}

@article{Iyengar2005,
  author  = {Iyengar, G. N.},
  title   = {Robust Dynamic Programming},
  journal = {Mathematics of Operations Research},
  volume  = {30},
  number  = {2},
  pages   = {257--280},
  year    = {2005}
}

@article{yang2021,
  author  = {Yang, I.},
  title   = {{W}asserstein Distributionally Robust Stochastic Control: A Data-Driven Approach},
  journal = {IEEE Transactions on Automatic Control},
  volume  = {66},
  number  = {8},
  pages   = {3863--3870},
  year    = {2021}
}

@article{KimYang2021,
  author  = {Kim, K. and Yang, I.},
  title   = {Distributional Robustness in Minimax Linear Quadratic Control with {W}asserstein Distance},
  journal = {SIAM Journal on Control and Optimization},
  volume  = {61},
  number  = {2},
  pages   = {458--483},
  year    = {2023}
}

@article{coache2024robust,
  author  = {Coache, A. and Jaimungal, S.},
  title   = {Robust Reinforcement Learning with Dynamic Distortion Risk Measures},
  journal = {SIAM Journal on Mathematics of Data Science},
  volume  = {8},
  number  = {1},
  pages   = {1--22},
  year    = {2026},
  doi     = {10.1137/24M1699802}
}

@incollection{Scarf1958,
  author    = {Scarf, H. E.},
  title     = {A Min--Max Solution of an Inventory Problem},
  booktitle = {Studies in the Mathematical Theory of Inventory and Production},
  pages     = {201--209},
  publisher = {Stanford University Press},
  year      = {1958}
}

@article{GallegoMoon1993,
  author  = {Gallego, G. and Moon, I.},
  title   = {The Distribution-Free Newsboy Problem: Review and Extensions},
  journal = {Journal of the Operational Research Society},
  volume  = {44},
  number  = {8},
  pages   = {825--834},
  year    = {1993},
  doi     = {10.1057/jors.1993.141}
}

@inproceedings{LimXuMannor2013,
  author    = {Lim, S. H. and Xu, H. and Mannor, S.},
  title     = {Reinforcement Learning in Robust {M}arkov Decision Processes},
  booktitle = {Advances in Neural Information Processing Systems},
  volume    = {26},
  pages     = {701--709},
  year      = {2013}
}

@inproceedings{WangHoPetrik2023,
  author    = {Wang, Q. and Ho, C. P. and Petrik, M.},
  title     = {Policy Gradient in Robust {MDP}s with Global Convergence Guarantee},
  booktitle = {Proceedings of the 40th International Conference on Machine Learning},
  series    = {Proceedings of Machine Learning Research},
  volume    = {202},
  pages     = {35763--35797},
  publisher = {PMLR},
  year      = {2023}
}

@article{NeufeldSester2024,
  author  = {Neufeld, A. and Sester, J.},
  title   = {Robust {Q}-Learning Algorithm for {M}arkov Decision Processes under {W}asserstein Uncertainty},
  journal = {Automatica},
  volume  = {168},
  pages   = {111825},
  year    = {2024}
}

@article{BlanchetMurthy2019,
  author  = {Blanchet, J. and Murthy, K.},
  title   = {Quantifying Distributional Model Risk via Optimal Transport},
  journal = {Mathematics of Operations Research},
  volume  = {44},
  number  = {2},
  pages   = {565--600},
  year    = {2019}
}

@book{billingsley_convergence_1999,
  author    = {Billingsley, P.},
  title     = {Convergence of Probability Measures},
  edition   = {2},
  series    = {Wiley Series in Probability and Statistics},
  publisher = {Wiley},
  year      = {1999}
}

@article{Bonnans1998,
  author  = {Bonnans, J.-F. and Shapiro, A.},
  title   = {Optimization Problems with Perturbations: A Guided Tour},
  journal = {SIAM Review},
  volume  = {40},
  number  = {2},
  pages   = {228--264},
  year    = {1998},
  doi     = {10.1137/S0036144596302644}
}

@inproceedings{KumarDermanGeistLevyMannor2023,
  author    = {Kumar, N. and Derman, E. and Geist, M. and Levy, K. Y. and Mannor, S.},
  title     = {Policy Gradient for Rectangular Robust {M}arkov Decision Processes},
  booktitle = {Advances in Neural Information Processing Systems},
  volume    = {36},
  pages     = {59477--59501},
  year      = {2023}
}

@inproceedings{LiLanMurthySrikant2024,
  author    = {Sun, Z. and He, S. and Miao, F. and Zou, S.},
  title     = {Policy Optimization for Robust Average-Reward {MDP}s},
  booktitle = {Advances in Neural Information Processing Systems},
  volume    = {37},
  pages     = {17348--17372},
  year      = {2024}
}

@inproceedings{YangGuoXuLiuAnandkumar2023,
  author    = {Yang, Z. and Guo, Y. and Xu, P. and Liu, A. and Anandkumar, A.},
  title     = {Distributionally Robust Policy Gradient for Offline Contextual Bandits},
  booktitle = {Proceedings of The 26th International Conference on Artificial Intelligence and Statistics},
  series    = {Proceedings of Machine Learning Research},
  volume    = {206},
  pages     = {6443--6462},
  publisher = {PMLR},
  year      = {2023}
}

@book{RockafellarWets1998,
  author    = {Rockafellar, R. T. and Wets, R. J.-B.},
  title     = {Variational Analysis},
  series    = {Grundlehren der mathematischen Wissenschaften},
  volume    = {317},
  publisher = {Springer},
  address   = {Berlin},
  year      = {1998},
  doi       = {10.1007/978-3-642-02431-3}
}

@book{Rockafellar1970,
  author    = {Rockafellar, R. T.},
  title     = {Convex Analysis},
  series    = {Princeton Mathematical Series},
  volume    = {28},
  publisher = {Princeton University Press},
  address   = {Princeton, NJ},
  year      = {1970}
}

@misc{AbdullahEtAl2019,
  author        = {Abdullah, M. A. and Ren, H. and Bou Ammar, H. and Milenkovic, V. and Luo, R. and Zhang, M. and Wang, J.},
  title         = {{W}asserstein Robust Reinforcement Learning},
  year          = {2019},
  eprint        = {1907.13196},
  archiveprefix = {arXiv}
}

@inproceedings{GrandClementKroer2021,
  author    = {Grand-Cl{\'e}ment, J. and Kroer, C.},
  title     = {First-Order Methods for {W}asserstein Distributionally Robust {MDP}s},
  booktitle = {Proceedings of the 38th International Conference on Machine Learning (ICML)},
  series    = {Proceedings of Machine Learning Research},
  volume    = {139},
  pages     = {2010--2019},
  publisher = {PMLR},
  year      = {2021}
}

@inproceedings{FastBellmanWDRMDP2023,
  author    = {Yu, Z. and Dai, L. and Xu, S. and Gao, S. and Ho, C. P.},
  title     = {Fast {B}ellman Updates for {W}asserstein Distributionally Robust {MDP}s},
  booktitle = {Advances in Neural Information Processing Systems},
  volume    = {36},
  pages     = {30554--30578},
  year      = {2023}
}

@article{WDRContextualBandits2023,
  author  = {Shen, Y. and Xu, P. and Zavlanos, M. M.},
  title   = {{W}asserstein Distributionally Robust Policy Evaluation and Learning for Contextual Bandits},
  journal = {Transactions on Machine Learning Research},
  year    = {2024}
}

@article{YueKuhnWiesemann2022,
  author  = {Yue, M.-C. and Kuhn, D. and Wiesemann, W.},
  title   = {On Linear Optimization over {W}asserstein Balls},
  journal = {Mathematical Programming},
  volume  = {195},
  pages   = {1107--1122},
  year    = {2022}
}

@article{wiesemannetal2013,
  author  = {Wiesemann, W. and Kuhn, D. and Rustem, B.},
  title   = {Robust {M}arkov Decision Processes},
  journal = {Mathematics of Operations Research},
  volume  = {38},
  number  = {1},
  pages   = {153--183},
  year    = {2013}
}

@article{xumannor2012,
  author  = {Xu, H. and Mannor, S.},
  title   = {Distributionally Robust {M}arkov Decision Processes},
  journal = {Mathematics of Operations Research},
  volume  = {37},
  number  = {2},
  pages   = {288--300},
  year    = {2012}
}

@inproceedings{sietal2020,
  author    = {Si, N. and Zhang, F. and Zhou, Z. and Blanchet, J.},
  title     = {Distributionally Robust Policy Evaluation and Learning in Offline Contextual Bandits},
  booktitle = {Proceedings of the 37th International Conference on Machine Learning},
  series    = {Proceedings of Machine Learning Research},
  volume    = {119},
  pages     = {8884--8894},
  publisher = {PMLR},
  year      = {2020}
}

@article{jaimungaletal2022,
  author  = {Jaimungal, S. and Pesenti, S. M. and Wang, Y. S. and Tatsat, H.},
  title   = {Robust Risk-Aware Reinforcement Learning},
  journal = {SIAM Journal on Financial Mathematics},
  volume  = {13},
  number  = {1},
  pages   = {213--226},
  year    = {2022},
  doi     = {10.1137/21M144640X}
}

@book{Polak1997,
  author    = {Polak, E.},
  title     = {Optimization: Algorithms and Consistent Approximations},
  series    = {Applied Mathematical Sciences},
  volume    = {124},
  publisher = {Springer-Verlag},
  address   = {New York},
  year      = {1997}
}
\end{small}

\end{document}